\tikzset{
  commutative diagrams/.cd, 
  arrow style=tikz, 
  diagrams={>=stealth}
}
\newenvironment{customthm}[1]
  {\innercustomthm}
  {\endinnercustomthm}
\def\@tocline#1#2#3#4#5#6#7{\relax
  \ifnum #1>\c@tocdepth 
  \else
    \par \addpenalty\@secpenalty\addvspace{#2}%
    \begingroup \hyphenpenalty\@M
    \@ifempty{#4}{%
      \@tempdima\csname r@tocindent\number#1\endcsname\relax
    }{%
      \@tempdima#4\relax
    }%
    \parindent\z@ \leftskip#3\relax \advance\leftskip\@tempdima\relax
    \rightskip\@pnumwidth plus4em \parfillskip-\@pnumwidth
    #5\leavevmode\hskip-\@tempdima
      \ifcase #1
       \or\or \hskip 1em \or \hskip 2em \else \hskip 3em \fi%
      #6\nobreak\relax
    \dotfill\hbox to\@pnumwidth{\@tocpagenum{#7}}\par
    \nobreak
    \endgroup
  \fi}
\newcounter{marginnote}
\DeclareMathAlphabet{\mathpzc}{OT1}{pzc}{m}{it}
\newtheorem{theorem}{Theorem}[subsection]
\newtheorem{lemma}[theorem]{Lemma}
\newtheorem{proposition}[theorem]{Proposition}
\newtheorem{quasi-theorem}[theorem]{Quasi-Theorem}
\theoremstyle{definition}
\newtheorem{definition}[theorem]{Definition}
\newtheorem{remark}[theorem]{Remark}
\newtheorem{example}[theorem]{Example}
\newtheorem{blank remark}[theorem]{}
\newtheorem{not1}[theorem]{Notation}
\newcommand{\CC} {{\mathbb C}}          
\newcommand{\NN} {{\mathbb N}}		
\newcommand{\PP}{\mathbb{P}}         
\newcommand{\QQ} {{\mathbb Q}}		
\newcommand{\RR} {{\mathbb R}}		
\newcommand{\ZZ} {{\mathbb Z}}		
\newcommand{\sfi}{\circ}
\newcommand{\sfbu}{{{\color{black}\circ}}}	
\def\setminus{\smallsetminus}
\DeclareMathOperator{\Aut}{Aut}
\DeclareMathOperator{\val}{val}
\DeclareMathOperator{\mult}{mult}
\DeclareMathOperator{\sign}{sign} 
\DeclareMathOperator{\ord}{ord}
\DeclareMathOperator{\spec}{Spec}
\newcommand{\cal}{\mathcal}
\def\cM{{\cal M}}
\def\cO{{\cal O}}
\newcommand{\Mbar}{\overline{\cM}}
\newcommand{\TSM}{\mathsf{DR}_{g}^{\trop}(\mathbf x,0)}
\newcommand{\TMC}{\mathsf{DR}^{\sfi, \trop}_{g}( \mathbf{x},0)}
\newcommand{\TMB}{\mathcal{M}^{\trop}_{g} (\mathbf{x}, 0)}
\newcommand{\MB}{\mathcal{{M}}_{g} (\mathbf{x},0)}  
\newcommand{\TC}{\mathsf{DR}_{g}^{\sfi,\lightening}(\mathbf{x},0)}
\newcommand{\LDR}{\mathsf{DR}_{g}^\lightening(\mathbf{x},0)}
\newcommand{\RSM}{{\mathcal{M}^{\trop,\sim}_{g}(\PP^1, \mathbf{x})}}
\def\trop{\mathrm{trop}}
\newcommand{\Spec}{\operatorname{Spec}}
\newcommand{\lightening}{{\mathsf{log}}}
\newcommand{\leak}{\mathsf{DR}_g^{\trop}(\mathbf x,k)}
\newcommand{\leakyn}{L_{g}(\mathbf{x},k)}
\newcommand{\renzo}[1]{}
\newcommand{\rcolor}[1]{{ \color{black}{#1}}}
\def\blfootnote{\xdef\@thefnmark{}\@footnotetext}
\title{Pluricanonical cycles and tropical covers}
\date{}
\author{Renzo Cavalieri, Hannah Markwig, {\it \&} Dhruv Ranganathan}
\begin{document}

\begin{abstract}
We extract a system of numerical invariants from logarithmic intersection theory on pluricanonical double ramification cycles, and show that these invariants exhibit a number of properties that are enjoyed by double Hurwitz numbers. Among their properties are (i) the numbers can be efficiently calculated by counts of tropical curves with a modified balancing condition, (ii) they are piecewise polynomial in the entries of the ramification vector, and (iii) they are matrix elements of operators on the Fock space. The numbers are extracted from the logarithmic double ramification cycle, which is a lift of the standard double ramification cycle to a blowup of the moduli space of curves. The blowup is determined by tropical geometry. We show that the traditional double Hurwitz numbers are intersections of the refined cycle with the cohomology class of a piecewise polynomial function on the tropical moduli space of curves. This perspective then admits a natural, combinatorially motivated, generalization to the pluricanonical setting. Tropical correspondence results for the new invariants lead immediately to the structural results for these numbers. 
\end{abstract}
\maketitle

\setcounter{tocdepth}{1}
\tableofcontents

\section{Introduction}

Double Hurwitz covers are maps 
\[
(C,p_1,\ldots, p_n)\to (\PP^1,0,\infty)
\]
from a smooth projective genus $g$ curve to $\mathbb P^1$. The ramification data is fixed by a length $n$ vector $\mathbf x$ with vanishing sum, whose entries record the orders of zeroes and poles at the corresponding marked point. We fix the ramification vector $\bf x$. There are two basic directions of interest in double Hurwitz theory. 

\begin{enumerate}[(i)]
\item {\bf Numerical.}  After fixing the locations of the branch points, there are finitely many covers of the form above. The count of such covers is the \textit{double Hurwitz number} $H_g(\mathbf x)$. 
\item {\bf Cohomological.} A natural compactification of the locus in $\cM_{g,n}$ of points that admit a double Hurwitz cover with ramification $\bf x$ produces tautological cohomology class on $\Mbar_{g,n}$ known as the \textit{double ramification cycle}. 
\end{enumerate}

Double Hurwitz numbers have a remarkable structure. They form basic building blocks in the Gromov--Witten theory of curves, can be computed via tropical correspondence theorems, exhibit a piecewise polynomiality property when varying $\bf x$, and can be connected to the representation theory of the Heisenberg algebra~\cite{CJM1,GJV,OP06}. The numbers and their structure remain of significant contemporary interest, for example in the context of topological recursion~\cite{BDKLM,DL}.

The double ramification cycle is a compactification of the locus in $\cM_{g,n}$ parameterizing double Hurwitz covers, and arises from the virtual geometry of the space of relative stable maps to $\mathbb P^1$. Developments in logarithmic and orbifold geometry have led to a beautiful explicit formula for the cycle in terms of standard classes~\cite{BHPSS,JPPZ,JPPZ2}. The double ramification cycle has natural generalizations coming from the geometry of the Picard variety, and of special interest are the \textit{pluricanonical double ramification cycles}, where one studies curves admitting pluricanonical divisors of a form dictated by $\bf x$, rather than principal divisors~\cite{BHPSS,FP18}. 

\subsection{Main results} We propose a pluricanonical generalization of the numerical side of double Hurwitz theory, and show that they share many of the properties of double Hurwitz numbers. The approach is based on a new connection, presented here, that extracts the numerical theory from the cohomological theory, which then naturally generalizes to the pluricanonical situation. Precisely, we observe that the double Hurwitz numbers can be extracted from the \textit{logarithmic} double ramification cycle by intersection with the cohomology class associated to a piecewise polynomial function on the tropical moduli space. These logarithmic enhancements were first considered in~\cite{HPS19} and studied further in~\cite{HMPPS,HS21,MR21}.The logarithmic context is critical; there appears to be no simple way to extract the Hurwitz numbers from the traditional double ramification cycle calculated in~\cite{JPPZ}. 

Once the double Hurwitz numbers have been expressed as above, the double ramification cycle can be replaced with its pluricanonical variants, with the intersection number coming from the same piecewise polynomial function as before. We show that these intersection numbers can be calculated by  tropical geometry, and import statements from double Hurwitz theory into the pluricanonical context via their tropical interpretations. 

Given a stack $\mathsf X$ with a normal crossings boundary divisor, a \textit{simple toroidal blowup} is a blowup $\mathsf X'\to \mathsf X$ along a smooth stratum; the preimage of the boundary is again normal crossings. A \textit{toroidal blowup} is a morphism $\mathsf Y\to \mathsf X$ obtained by a sequence of simple toroidal blowups. The \textit{logarithmic Chow ring of $\mathsf X$} is the colimit of the Chow rings $\mathsf{CH}^\star(\mathsf X')$ under pullback. A fruitful source of logarithmic cohomology classes in this Chow ring are piecewise polynomial functions on the cone complex of $\mathsf X$; this is explained in Section~\ref{sec:tcpp}, following~\cite{HS21,MPS21,MR21}.

Given an integer $k$ and a vector $\mathbf x$ in $\mathbb Z^n$, the \textit{logarithmic $k$-pluricanonical double ramification cycle} $\mathsf{DR}_g^{\mathsf{log}}(\mathbf x,k)$ is defined in the logarithmic Chow ring of $\Mbar_{g,n}$. When restricted to the interior $\cM_{g,n}$ the class is represented by the locus of pointed curves $(C,p_1,\ldots, p_n)$ such that the divisor $\sum x_ip_i$ is isomorphic to the $k^{th}$ power of the logarithmic canonical bundle of $C$. 

Fix a genus $g$ and ramification vector $\bf x$. In Section~\ref{sec:dhn} we construct an explicit piecewise polynomial function on the tropical moduli space of curves, called the \textit{branch polynomial}. It is a function
\[
br(\mathbf x,0): |\cM^\trop_{g,n}|\to \RR,
\]
and upon restriction to the locus of tropical curves admitting a balanced map to $\RR$ with ramification $\mathbf x$, it is constructed from the data of the images in $\mathbb R$ of the\renzo{I would actually propose to just remove trivalent and genus $0$ here because this is only the case on maximal dimenional cones. For lower dimensional cones we are using the position of vertices (as well as valence and genus) to describe br } 
vertices of the tropical curve. We take $br(\mathbf x,0)$ to be any extension of this polynomial to the whole moduli space. 

\begin{customthm}{A}
The intersection number of $\mathsf{DR}_g^{\mathsf{log}}(\mathbf x,0)$ with the branch polynomial is equal to the double Hurwitz number:
\[
H_g(\mathbf x) = \mathrm{deg}\left([br(\mathbf x,0)]\cap \mathsf{DR}_g^{\mathsf{log}}(\mathbf x,0)\right).
\]
The branch polynomial may be explicitly represented by a cycle consisting of a collection of strata in a toroidal blowup of $\Mbar_{g,n}$.
\end{customthm}

The result serves as motivation for us to \textit{define} the $k$-pluricanonical double Hurwitz number with ramification vector $\bf x$ as the intersection number of $\mathsf{DR}_g^{\mathsf{log}}(\mathbf x,k)$ with the branch polynomial $br_g(\mathbf x, k)$\renzo{Here we have a subscript $g$ that is not present in the br-0. Should we make the notation uniform?} which has essentially the same definition, see Section~\ref{sec:lpn}. The pluricanonical numbers exhibit remarkable structural properties. They can be effectively computed by counts of \textit{leaky tropical covers}, analogous to tropical correspondence theorems~\cite{BBM,CJM1}. A sum-over-graphs formula is presented in Theorem~\ref{thm: leaky-formula}. 

\begin{customthm}{B}
The $k$-pluricanonical double Hurwitz number is equal to a weighted count of $k$-leaky tropical covers. Moreover, the counts are piecewise polynomial in the ramification data $\mathbf x$. 
\end{customthm}

The numbers also fit well into the operator formalism governing double Hurwitz numbers. We define an operator on the Bosonic Fock space in Section~\ref{sec:fs} whose matrix elements recover the $k$-pluricanonical double Hurwitz numbers.

The generalization of the tropical double Hurwitz numbers to the pluricanonical context is combinatorially natural, and this was our motivation in examining it. Tropical double Hurwitz covers are continuous piecewise linear maps from tropical curves to $\RR$ that satisfy the balancing condition -- the sum of the directional derivatives of the function at each vertex is $0$. The pluricanonical variant allows a measured failure of the balancing condition. Equivalently, the numbers are a weighted count of rational functions in the tropical pluricanonical series.  Despite structural similarities, we are not aware of a simple enumerative definition of the pluricanonical Hurwitz numbers. 

It is in general an interesting problem to give compact expressions for the intersections of logarithmic double ramification cycles with piecewise polynomial classes, and tropical geometry provides one tool for finding such expressions. This point is illustrated by the following result. In Section~\ref{sec:lg} we construct the \textit{branch polynomial at level $h$} for any $h$ between $0$ and $g$ as a piecewise polynomial class. We show that the \textit{lower genus} double Hurwitz numbers can be extracted from higher genus cycles. 

\begin{customthm}{C}
For each integer $h$ between $0$ and $g$, the intersection number of $\mathsf{DR}_g^{\mathsf{log}}(\mathbf x,0)$ with the branch polynomial at level $h$ is equal to the genus $h$ double Hurwitz number with ramification $\bf x$.\renzo{Are we hiding some factors of $1/24$ here or are they folded into the definition of the branch poly at level $h$ - check.} 
\end{customthm}

The research community is aware that the virtual class of stable map spaces in genus $g$ contains information about the lower genus curves. However, it is difficult to extract this information numerically. We believe the results, taken together, give a concrete instance of the type of information carried by the logarithmic double ramification cycle that is not carried by its less refined counterpart. 

\subsection{Further discussion and future directions} A geometric motivation to understand the class $\mathsf{DR}_g^{\mathsf{log}}(\mathbf x,0)$ arises from the product formula in Gromov--Witten theory~\cite{Herr,HPS19,R19b}. For example, it can be used to completely solve the logarithmic Gromov--Witten invariants of toric varieties~\cite{RUK22}. The cycle class of curves admitting a map to a toric variety with prescribed contact orders with its toric boundary can be recovered from products of double ramification cycles in the logarithmic tautological ring~\cite{MR21,R19b}. In turn, these \textit{toric contact cycles} in the moduli space of curves, and natural variants for torus bundles over manifolds, form the basic building block in the Gromov--Witten theory of simple normal crossings pairs. Precisely, in logarithmic Gromov--Witten theory with expansions, boundary strata naturally parameterize maps to toric varieties and toric bundles; these loci contribute to Gromov--Witten theory via the toric contact cycle~\cite{MR21,R19}.

The logarithmic double ramification cycle remains mysterious, though there has been recent progress. It has been shown that the class $\mathsf{DR}^\lightening(\mathbf x,0)$ lies in an appropriately defined tautological ring on the blowup~\cite{HS21,MR21}. The main result of the latter of these papers describes the class $\mathsf{DR}^\lightening(\mathbf x,0)$ as a \textit{virtual strict transform} of the well-studied double ramification cycle on $\Mbar_{g,n}$. The operation is analogous to the standard strict transform, however, birational modification under which this operation must be calculated has a high degree of combinatorial complexity. A Pixton-type formula for the cycle has been established in recent work~\cite{HMPPS} by applying universal Abel--Jacobi theory to compactified Picard stacks~\cite{BHPSS}, though the formula is complicated. 

The results here motivate a different approach, via generalized tropical correspondence theorems. For arbitrary values of $k$, we take the view that $\mathsf{DR}^\lightening(\mathbf x,k)$ determines an operator on the logarithmic Chow ring of the moduli space of curves
\[
\mathsf{dr}^\lightning(\mathbf x,k): \mathsf{CH}_{\mathsf{log}}^\star(\Mbar_{g,n})\to \QQ
\] 
by intersecting with $\mathsf{DR}^\lightening(\mathbf x,k)$ and taking degree. While the operator does not determine the cycle $\mathsf{DR}^\lightening(\mathbf x,k)$, it captures an aspect of the class that is highly relevant to enumerative geometry. The correspondence results suggest that efficient formulas may exist for intersections of the logarithmic double ramification cycle with special classes. Two of particular interest are: 

\begin{enumerate}[(i)]
\item The descendant classes $\psi_i$ associated to the marked points, pulled back from $\Mbar_{g,n}$.
\item The piecewise polynomials on subdivisions of the tropical moduli space of curves $\cM_{g,n}^{\trop}$.
\end{enumerate}

The first is completely standard. The second are the natural generalizations of decorated strata classes in $\Mbar_{g,n}$ to these blowups~\cite{HS21,MPS21,MR21}. The restriction of the operator above to descendant classes is known in terms of standard classes by a combination of Pixton's formula and the multiplication rules for descendants. There is remarkable hidden structure that becomes visible in special cases of $\mathsf{dr}^\lightning(\mathbf x,k)$:
\begin{enumerate}[(i)]
\item If $\mathbf x$ is zero, the restriction of $\mathsf{dr}^\lightning(\mathbf x,k)$ to classes of the form (i) is determined by the $\lambda_g$-conjecture, see work of Faber--Pandharipande~\cite{FP03}.
\item If $k$ is zero and $\mathbf x$ has no zero entries, there is a remarkable formula for $\mathsf{dr}^\lightning(\mathbf x,0)$ on the descendants, see work of Buryak--Shadrin--Spitz--Zvonkine~\cite{BSSZ}. A closely related direction for other values of $k$ is found in~\cite{CSS21}. 
\end{enumerate}

We expect that it will be natural to marry our calculation via tropical curve counts with the descendant integrals in the second listed result above, to graphically control the operator $\mathsf{dr}^\lightning(\mathbf x,0)$ on the full space of piecewise polynomial classes. 

A further direction of study would be to examine the higher dimensional variant of the problem here, i.e. to study tropical curves in $\mathbb R^n$ with a pluricanonically modified balancing condition. It is natural to expect a tropical correspondence theorem relating these to intersection numbers on toric contact cycles of rank $n$, following~\cite{MR21} and generalizing Mikhalkin's correspondence theorem~\cite{Mi03}. We leave this for future work. 

\subsection*{Corrections from the published version} The present version of this article corrects an error in the previous version, and in the published version. Specifically, a set of combinatorial types of leaky tropical covers -- those whose domains have genus one leaves -- were incorrectly ignored. These graphs should be counted, with a vertex multiplicity explained here in Section~\ref{sec: leaky-numbers-tropically}. Accounting for these graphs, the proof is essentially unchanged. 

\subsection*{Acknowledgements} We are grateful to friends and colleagues for numerous conversations over the years related to correspondence theorems,  double Hurwitz numbers, and double ramification cycles. In particular, we wish to thank L. Battistella, N. Chidambaram, D. Holmes, V. K\"orber, S. Molcho, S. Shadrin, J. Schmitt, N. Nabijou, P. Johnson, R. Pandharipande, and J. Wise. We extend a special thanks to J. Schmitt and S. Molcho for helping us catch a missing combinatorial factor in an earlier version. We also thank the anonymous referees for numerous useful suggestions on an earlier version. As noted above gap was found after work of D. Accadia, M. Karev and D. Lewanski with particular support by J. Schmitt. We thank these four for the careful work with our paper and for helping us to fill this gap. Special thanks to J. Schmitt and admcycles for confirming an example computation.

\subsection*{Funding} R.C. has received support from Simons Collaboration Grant 420720 and  NSF grant DMS - 2100962.  H.M. has received support from the SFB-TRR 195 ``Symbolic Tools in Mathematics and their Application'' of the German Research Foundation (DFG). D.R. has received support from EPSRC New Investigator Award EP/V051830/1.

\section{Transverse cycles and piecewise polynomials} \label{sec:tcpp}

The study of double ramification cycles and their pluricanonical generalizations has received substantial interest in recent years, beginning in~\cite{FP18,JPPZ} and culminating in~\cite{BHPSS}. We refer the reader to the references in the latter paper for a full bibliography on the topic. These cycles have typically been constructed on the moduli space of stable curves, stable maps, and prestable curves. We work here with a more refined object, the logarithmic double ramification cycles and its pluricanonical versions~\cite{Hol17,HPS19,HS21,MW17,MR21}. We recall the construction here, and place it in the necessary context. 

\subsection{Nonsingular curves} \label{sec:nonsingc}The moduli space $\cM_{g,n}$ of smooth $n$-marked genus $g$ curves is equipped with a universal curve. The universal Picard group is a family
\[
\mathbf{Pic}_{g,n}\to \cM_{g,n}
\]
of relative dimension $g$, whose fiber over a marked curve is its Picard variety. The family admits tautological sections of two flavours. The first sends a curve $C$ to a power of the logarithmic canonical bundle $\omega_C^{\mathsf{log},\otimes k}$ for some integer $k$. The second depends on a tuple $\mathbf x = (x_1,\ldots, x_n)$ of integers, and sends a marked curve $(C,\underline p)$ to the line bundle $\mathcal O_C(\mathbf x \cdot\underline p)$. We denote this section by
\[
\sigma_{\mathbf x}:\cM_{g,n}\to \mathbf{Pic}_{g,n}.
\]
The map is a section of a smooth fibration, and in particular, it is a regular embedding. 

\begin{definition}
Given a tuple of integers $\mathbf x$ whose sum is $k(2g-2+n)$, the \textit{$k$-pluricanonical cycle with ramification $\mathbf x$} is defined to be the Gysin pullback 
\[
\mathsf{DR}_g^\sfbu(\mathbf{x},k):=\sigma_{\mathbf x}^![\omega^k]
\]
where $[\omega^k]$ is the image of the logarithmic $k$-pluricanonical section of $\mathbf{Pic}_{g,n}\to \cM_{g,n}$.
\end{definition}

The cycle may be extended to the compact moduli space of curves $\Mbar_{g,n}$ either by Gromov--Witten theory or by extending the Abel--Jacobi map using birational modifications~\cite{Hol17,MW17}. In fact, the Holmes--Molcho--Wise construction produces something more refined, which is a class on a blowup of $\Mbar_{g,n}$, as we shortly explain. 

\subsection{Tropical covers and their moduli} We recall that there is moduli stack $\cM_{g,n}^{\trop}$ of tropical curves of genus $g$ with $n$-marked points, constructed in~\cite{CCUW}. Formally, it is stack over the category of rational polyhedral cone complexes. Its fiber over a cone $\sigma$ is the groupoid of all tropical curves metrized by elements in the dual monoid $S_\sigma$. We refer to~\cite[Section~3]{CCUW} for further details.\footnote{In fact, for our purposes, the older perspective employed by Abramovich--Caporaso--Payne suffices~\cite{ACP}.}

As we momentarily explain, the tropical analogue of the discussion in Section \ref{sec:nonsingc} gives rise to ``tropical substacks''
\[
\mathsf{DR}_g^{\trop}(\mathbf x,k)\hookrightarrow \cM_{g,n}^{\trop}
\]
of tropical analogues of pluricanonical sections. We introduce the relevant notions. 

To each $n$-pointed curve $(C,p_1,\ldots,p_n)$ one may associate its  {\em weighted dual graph}: 
$$\mathsf{G} =\ (V, E, L, h),$$

\begin{enumerate}[(i)]
\item the set $V$ of vertices is taken to be the set of irreducible components of $C$;
\item the edge set  $E$ is the set of nodes of $C$, with the natural incidence relation between components;
\item the { ordered} set of { ends} of  $L$ labelled $\{1,\ldots, n\}$ is the set of marked points on $C$,  with a marked end incident to a vertex if the corresponding point lies on the component corresponding to the vertex;
\item the genus function $g: V \to \ZZ_{\geq 0}$ gives the geometric genus $g(v)$ of the component dual to $v$.
\end{enumerate}

The \textit{genus} of a weighted dual graph is the sum of the values of the genus function at all vertices, plus the first Betti number of the geometric realization of $\mathsf G$. 

\begin{definition}
An \textit{abstract tropical curve} is a pair $\Gamma = (\mathsf G,\ell)$ where $\mathsf G$ is a weighted dual graph as above and 
\[
\ell: E\to \RR_{>0}
\]
is a length function.  
\end{definition}

The function $\ell$ enhances the topological realization of the graph to a metric space. We enhance it further by attaching, for each end, a copy of $\RR_{\geq 0}$ to this metric where $0$ is identified with the vertex which supports the end. It will often be useful to consider piecewise linear functions on the metric realization of an abstract tropical curve.

We want to examine covers of $\RR$ by graphs up to additive translation,  and  equip  $\RR$ with a polyhedral subdivision to ensure the result is a map of metric graphs (see e.g.\ Section 5.4 and \ Figure 3 in \cite{MW17}). A \textit{metric line graph} is any metric graph obtained from a polyhedral subdivision of $\RR$. The metric line graph determines the polyhedral subdivision up to translation. We fix an orientation of a metric line graph going from left to right (i.e.\ from negative values in $\RR$ to positive values).


\begin{definition}[Leaky cover]
Let $\pi:\Gamma\rightarrow T$ be a surjective map of metric graphs where $T$ is a metric line graph. (A map of metric graphs is a continuous function such that the preimage of a vertex contains only vertices.) We require that $\pi$ is piecewise integer affine linear, i.e.\ on each edge (which we identify with a real interval of suitable length) it is of the form $t\mapsto a+t\cdot \omega(e)$ with $a \in T, \omega(e)\in \NN_{> 0}$. The $\omega(e)$ of $\pi$ on a flag or edge $e$ is called the \emph{expansion factor}.
(We always measure the slope in positive orientation. We do not need to consider contracted edges for our purposes.)

For a vertex $v\in \Gamma$, the \emph{left (resp.\ right) degree of $\pi$ at $v$} is defined as follows. Let $f_l$ be the flag of $\pi(v)$ in $T$ pointing to the left and $f_r$ the flag pointing to the right. Add the expansion factors of all flags $f$ adjacent to $v$ that map to $f_l$ (resp.\ $f_r$):
\begin{equation}
d_v^l=\sum_{f\mapsto f_l} \omega(f), \;\;\;\; d_v^r=\sum_{f\mapsto f_r} \omega(f).
\end{equation} 

\noindent
The map $\pi:\Gamma\rightarrow T$ is called a \emph{$k$-leaky cover} if for every $v\in \Gamma$
$$d_v^l-d_v^r= k(2g(v)-2+\val(v)).$$
\end{definition}


\begin{remark}[Vertex set]
From now on, we impose a stability condition: A leaky cover $\Gamma\to T$ is called stable if the preimage of every vertex of $T$ contains a vertex of $\Gamma$ in its preimage which is of genus greater than $0$ or valence greater than $2$. Figure \ref{fig-leakycoverex} shows an example of a stable leaky cover.

\begin{figure}

\tikzset{every picture/.style={line width=0.75pt}} 

\begin{tikzpicture}[x=0.75pt,y=0.75pt,yscale=-1,xscale=1]

\draw    (209,219.98) -- (285.42,219.98) ;
\draw    (285.42,219.98) .. controls (297.68,210.09) and (309.95,206) .. (331.65,205.32) ;
\draw    (285.42,219.98) .. controls (308.06,225.77) and (348.63,229.86) .. (385.42,218.27) ;
\draw    (385.42,218.27) -- (417.5,218.95) ;
\draw    (331.65,205.32) -- (415.61,204.64) ;
\draw    (331.65,205.32) -- (385.42,218.27) ;
\draw  [fill={rgb, 255:red, 0; green, 0; blue, 0 }  ,fill opacity=1 ] (383.67,204.92) .. controls (383.67,203.95) and (384.46,203.17) .. (385.42,203.17) .. controls (386.39,203.17) and (387.17,203.95) .. (387.17,204.92) .. controls (387.17,205.88) and (386.39,206.67) .. (385.42,206.67) .. controls (384.46,206.67) and (383.67,205.88) .. (383.67,204.92) -- cycle ;
\draw  [fill={rgb, 255:red, 0; green, 0; blue, 0 }  ,fill opacity=1 ] (283.67,219.98) .. controls (283.67,219.01) and (284.45,218.23) .. (285.42,218.23) .. controls (286.39,218.23) and (287.17,219.01) .. (287.17,219.98) .. controls (287.17,220.94) and (286.39,221.73) .. (285.42,221.73) .. controls (284.45,221.73) and (283.67,220.94) .. (283.67,219.98) -- cycle ;
\draw  [fill={rgb, 255:red, 0; green, 0; blue, 0 }  ,fill opacity=1 ] (329.9,205.32) .. controls (329.9,204.35) and (330.68,203.57) .. (331.65,203.57) .. controls (332.61,203.57) and (333.4,204.35) .. (333.4,205.32) .. controls (333.4,206.28) and (332.61,207.07) .. (331.65,207.07) .. controls (330.68,207.07) and (329.9,206.28) .. (329.9,205.32) -- cycle ;
\draw  [fill={rgb, 255:red, 0; green, 0; blue, 0 }  ,fill opacity=1 ] (329.9,225.92) .. controls (329.9,224.95) and (330.68,224.17) .. (331.65,224.17) .. controls (332.61,224.17) and (333.4,224.95) .. (333.4,225.92) .. controls (333.4,226.88) and (332.61,227.67) .. (331.65,227.67) .. controls (330.68,227.67) and (329.9,226.88) .. (329.9,225.92) -- cycle ;
\draw  [fill={rgb, 255:red, 0; green, 0; blue, 0 }  ,fill opacity=1 ] (383.67,217.58) .. controls (383.67,216.62) and (384.46,215.83) .. (385.42,215.83) .. controls (386.39,215.83) and (387.17,216.62) .. (387.17,217.58) .. controls (387.17,218.55) and (386.39,219.33) .. (385.42,219.33) .. controls (384.46,219.33) and (383.67,218.55) .. (383.67,217.58) -- cycle ;
\draw    (319,250.75) -- (319,277.75) ;
\draw [shift={(319,279.75)}, rotate = 270] [color={rgb, 255:red, 0; green, 0; blue, 0 }  ][line width=0.75]    (10.93,-3.29) .. controls (6.95,-1.4) and (3.31,-0.3) .. (0,0) .. controls (3.31,0.3) and (6.95,1.4) .. (10.93,3.29)   ;
\draw    (210,288.25) -- (414.5,288.75) ;
\draw  [fill={rgb, 255:red, 0; green, 0; blue, 0 }  ,fill opacity=1 ] (283.67,288) .. controls (283.67,287.03) and (284.45,286.25) .. (285.42,286.25) .. controls (286.39,286.25) and (287.17,287.03) .. (287.17,288) .. controls (287.17,288.97) and (286.39,289.75) .. (285.42,289.75) .. controls (284.45,289.75) and (283.67,288.97) .. (283.67,288) -- cycle ;
\draw  [fill={rgb, 255:red, 0; green, 0; blue, 0 }  ,fill opacity=1 ] (329.9,288.5) .. controls (329.9,287.53) and (330.68,286.75) .. (331.65,286.75) .. controls (332.61,286.75) and (333.4,287.53) .. (333.4,288.5) .. controls (333.4,289.47) and (332.61,290.25) .. (331.65,290.25) .. controls (330.68,290.25) and (329.9,289.47) .. (329.9,288.5) -- cycle ;
\draw  [fill={rgb, 255:red, 0; green, 0; blue, 0 }  ,fill opacity=1 ] (383.67,288.25) .. controls (383.67,287.28) and (384.46,286.5) .. (385.42,286.5) .. controls (386.39,286.5) and (387.17,287.28) .. (387.17,288.25) .. controls (387.17,289.22) and (386.39,290) .. (385.42,290) .. controls (384.46,290) and (383.67,289.22) .. (383.67,288.25) -- cycle ;

\draw (233.25,205.11) node [anchor=north west][inner sep=0.75pt]  [font=\footnotesize] [align=left] {5};
\draw (290.8,199.66) node [anchor=north west][inner sep=0.75pt]  [font=\footnotesize] [align=left] {3};
\draw (322.87,230.34) node [anchor=north west][inner sep=0.75pt]  [font=\footnotesize] [align=left] {1};
\draw (364.77,191.33) node [anchor=north west][inner sep=0.75pt]  [font=\footnotesize] [align=left] {1};
\draw (400.77,191.33) node [anchor=north west][inner sep=0.75pt]  [font=\footnotesize] [align=left] {1};
\draw (400.24,224.2) node [anchor=north west][inner sep=0.75pt]  [font=\footnotesize] [align=left] {1};
\draw (339.86,211.59) node [anchor=north west][inner sep=0.75pt]  [font=\footnotesize] [align=left] {1};

\end{tikzpicture}

\caption{A $1$-leaky cover of genus $1$ and degree $(5,-1,-1)$, with its minimal vertex set. We do not specify length data in this picture, as the lengths in $\Gamma$ are imposed by the distances of the points in $T$. All vertices are supposed to be of genus $0$. For simplicity, we also suppress the labels for the ends in this picture.}\label{fig-leakycoverex}
\end{figure}
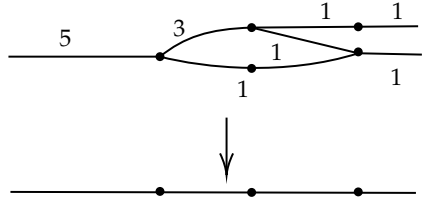

\end{remark}

\begin{definition}[Left and right degree]
The \emph{left (resp.\ right) degree}  of a leaky cover is the tuple of expansion factors of its ends mapping asymptotically to $-\infty$ (resp.\ $+\infty$). 
The tuple is indexed by the lables of the ends mapping to  $-\infty$ (resp.\ $+\infty$).  When the order imposed by the labels of the ends plays no role, we drop the information and treat the left and right degree only as a multiset. 
\end{definition}
By convention, we denote the left degree by $\mathbf{x}^{+}$ and the right degree by $\mathbf{x}^{-}$. In the right degree, we use negative signs for the expansion factors, in the left degree positive signs. We also merge the two to one vector which we denote $\mathbf{x}=(x_1,\ldots,x_n)$  called the \emph{degree}\footnote{The term \textit{degree} here is consistent with the tropical geometry literature, where it is sometimes called the ``toric degree''. In logarithmic geometry $\mathbf x$ would be called the contact data.}. The labeling of the ends plays a role: the expansion factor of the end with the label $i$ is $x_i$. In $\mathbf{x}$, we  distinguish the expansion factors of the left ends from those of the right ends by their sign.
An Euler characteristic calculation, combined with the $k$-leaky cover condition, shows that 
$$ \sum_{i=1}^n x_i=k\cdot (2g-2+n), $$
where $g$ denotes the genus of $\Gamma$.

An automorphism of a leaky cover is an automorphism of $\Gamma$ compatible with $\pi$, i.e.\ it is an automorphism lying over the identity on $T$.

If we view the expansion factors of a $1$-leaky tropical cover as slopes of a rational function on $\Gamma$, then the divisor of this rational function is the canonical divisor of $\Gamma$. For readers not familiar with divisors on abstract tropical curves, we repeat the most important notions in the following. More details on divisors on abstract tropical curves can be found e.g.\ in \cite{BJ, HMY10}.

\begin{definition}[Divisors and the tropical canonical]
Let $\Gamma$ be an abstract tropical curve. A \emph{divisor} $D$ on $\Gamma$ is a formal finite $\mathbb{Z}$-linear combination $\sum_{v\in \Gamma} D(v) \cdot v$ of points on $\Gamma$. Let $\Gamma$ be an abstract tropical curve. The \emph{canonical divisor} on $\Gamma$ is given by $\sum_{v\in \Gamma} (\val(v)-2+2g(v)) \cdot v$.
\end{definition}

\begin{definition}[Rational functions on abstract tropical curves and their divisors]
Let $\Gamma$ be an abstract tropical curve. A \emph{rational function} $f$ on $\Gamma$ is a continuous function $f:\Gamma\rightarrow \RR$ which is piecewise linear with finitely many distinct linear pieces and integer slopes. The \emph{order} $\ord_v(f)$ of $f$ at a point $v$ is the sum of the outgoing slopes. The \emph{divisor of a rational function} $f$ is defined as $$(f):=\sum_{v\in \Gamma} \ord_v(f) \cdot v.$$
\end{definition}

Given a $1$-leaky tropical cover $\pi:\Gamma\rightarrow T$, we view the expansion factors on the edges as slopes of a rational function $f$ (up to global shift). Then, by definition, the divisor $(f)$ equals the canonical divisor of $\Gamma$.

\begin{definition}
The \emph{combinatorial type} of a leaky cover $\Gamma\to T$ is the data obtained on dropping the edge length function. That is, we keep the information of the abstract graph underlying $\Gamma$ including the unbounded ends, the genus function, the underlying graph of the line graph $T$, the graph theoretic map between them, and the expansion factors along bounded and unbounded edges.
\end{definition}

\subsection{Leaky moduli}
\label{sec:leakymod}
 A moduli space of 
$k$-leaky tropical covers may be constructed as a stack over the category of cone complexes, or as a generalized cone complex. The construction for the double ramification cycle problem, when $k = 0$, is explained in~\cite{MR21,UZ19} and is nearly identical to~\cite{ACP,CMR14a}. The higher $k$ generalization is straightforward, and we only provide a sketch. 

\noindent
{\sc Rigidified moduli for a fixed graph.} Fix a combinatorial type $\Theta$ of a leaky cover. Let $E$ be the edge set of $\Gamma$ and $F$ the edge set of the target $T$. Let $E_0\subset E$ be the subset of edges that have expansion factor equal to $0$; we refer to them as \textit{vertical}. In order to enhance this to a leaky cover, we provide each edge in $F$ with an edge length. Each element of $E$ maps piecewise linearly with determined integer slope onto a corresponding element of $F$. Assuming this slope is nonzero, given an element $e$ in $E$, its length is fully determined by the length of its corresponding image edge. If additionally, we fix an isomorphism of the source with a fixed copy of $\Gamma$, all leaky covers of this combinatorial type are naturally parameterized by the open cell in:
\[
\sigma_\Theta = \RR_{\geq 0}^F\times\RR_{\geq 0}^{E_0}.
\]
{We declare a point to be \textit{integral} if  all edges, both of source and of target, have integer length. Note that there is another possible integral structure which can also be useful, see Remark~\ref{rem: two-lattices}.}

\noindent
{\sc Boundary of the cell and edge contractions.}  The boundary faces of the cone $\sigma_\Theta$ parameterize leaky covers from ``degenerations'' of $\Theta$. Precisely, the set of faces is characterized by the notion of a \textit{contraction}. \rcolor{ A \textit{simple contraction} of $\Theta$ contracts  either one edge of $\Gamma$ that maps to a vertex of $T$, or an edge of $T$ and all the edges of $\Gamma$ mapping to it. A \textit{contraction} is a sequence of simple contractions. Contractions give rise to new combinatorial types, which  index the faces of $\sigma_\Theta$. }


\noindent
{\sc Automorphisms.} Given a combinatorial type $\Theta$, an automorphism of $\Theta$ is an automorphism of the source -- as a weighted dual graph -- that commutes with the map to the target graph. An automorphism of $\Theta$ give rise to an automorphism of the cone $\sigma_\Theta$. 

\noindent
{\sc Colimit construction.} Consider the category $I_{g,n}(\mathbf x)$ whose objects are combinatorial types of leaky covers of fixed genus and markings, with unbounded directions having slope $\mathbf x$ as discussed above. The morphisms are given by graph contractions and automorphisms. Let $\mathbf{RPC}$ be the category of rational polyhedral cones, equipped with face morphisms. The association $\Theta \mapsto \sigma_\Theta$ defines a functor
\[
I_{g,n}(\mathbf x)\to \mathbf{RPC}.
\]
By~\cite[Section~3]{CCUW} there is a cone stack obtained as the colimit of this diagram as a stack over the category $\mathbf{RPC}$. 


\begin{definition}[Moduli space of leaky covers] \label{def:msleaky}
We denote the \emph{moduli space of $k$-leaky covers of genus $g$ and degree $\mathbf{x}$} as constructed above by $\leak$. 
\end{definition}

By forgetting the data of the cover, each leaky cover gives rise to a tropical curve, so there is a map of cone stacks
\[
\leak\hookrightarrow \cM_{g,n}^{\trop}. 
\]
Note that this forgetful arrow is injective on automorphism groups: the automorphisms of a cover are defined as a subset of those of the curve. The map is therefore representable by cone spaces in the sense of~\cite[Section~2.1]{CCUW}. In fact, one easily sees explicitly that if we fix a cone $\sigma\to \cM_{g,n}^{\trop}$, then forming the fiber product, we a map
\[
\Delta\to \sigma,
\]
which is a union of cones in $\sigma$ parameterizing leaky covers. Note that the underlying map on topological spaces is injective, which justifies the notation above. 

\begin{remark}[The two integral structures]\label{rem: two-lattices}
Note that when making calculations, and specifically those having to do with indices of maps between cone complexes, it is convenient to use the alternate integral structure on $\leak$, where an integer point is one such that all contracted source edge and target edge lengths are integers, and to endow the cone with a weight, as is standard in tropical geometry. Given a type $\Theta$, we have described two integral structures -- the first demanding integrality of source and target edges, and a second demanding integrality of \textit{vertical} source and and target edges. The first is a finite index sublattice of the latter. We call the resulting index the \textit{weight} of the cone.  The next example illustrates the typical situation.\renzo{we are slightly cheating here, because the index of the lattice is only one factor in the weight of the cone. There is the other factor coming from the normalization of the HM space...} 
\end{remark}


\begin{example}\label{ex-polyhedron}
Consider the leaky cover from Figure \ref{fig-leakycoverex} and its combinatorial type. It has five bounded edges, of which four form a cycle. The equation that the cycle closes up is $3l_1+l_2=l_3+l_4$, where $l_1$ and $l_2$ denote the lengths of the upper edges of the cycle and $l_3$ and $l_4$ the lengths of the lower. The equation that the two middle vertices have the same image is $3l_1=l_3$, or, equivalently, $l_2=l_4$.
All leaky covers of this combinatorial type are parametrized by the points in the $2$-dimensional open polyhedron
$$\RR_{>0}^4\cap \{3l_1+l_2-l_3-l_4=0, l_2-l_4=0\}.$$
The index of the lattice defined by $3l_1+l_2-l_3-l_4=0, l_2-l_4-0$ equals the greatest common divisor of the absolute values of the $2\times2$-minors of the matrix
$$\left(\begin{matrix}
3&1&-1&-1&\\0&1&0&-1
\end{matrix}\right)
$$
which equals $1$. Thus, the weight of the corresponding top-dimensional stratum in the moduli space of leaky covers of genus $1$ and degree $(5,-1,-1)$, in the sense of Remark~\ref{rem: two-lattices}, is $1$.\renzo{We have not mentioned any notion of weights for cones at this point. Also, these weights  imply that the integral structure is not the natural one on $\RR_{\geq 0}^F\times\RR_{\geq 0}^{E_0}$ but rather the one inherited from $\RR^E$, which is indeed the choice we make but it is stated in the next subsection. Hannah: I agree with Renzo, we should say something about the weight before. I don't feel like going ahead and changing anything before talking about it...(maybe via email will be sufficient)}
\end{example}


\subsection{Subdivisions and blowups} Generalizing the standard dictionary of toric geometry, modifications of the polyhedral structure on $\cM_{g,n}^{\trop}$ give rise to birational modifications of $\Mbar_{g,n}$, see~\cite{AW}. There are three basic operations of interest to us. Let $\Sigma$ be a cone complex. 
\begin{description}
\item[Complete subdivisions] A complete subdivision is a morphism $\Sigma'\to \Sigma$ of cone complexes that is bijective on the supports and preserves integral points. 
\item[Rooting] Given $\Sigma$, a root construction is a morphism of cone complexes $\Sigma'\to \Sigma$ that is bijective on supports and injective on lattice points. 
\item[Passage to a subcomplex] A \textit{subcomplex} is a morphism $\Sigma'\to \Sigma$ that is an inclusion of a subset of cones. 
\end{description}

The operations for cone stacks are obtained in the standard fashion: a morphism $\cM\to \cM_{g,n}^\trop$ is a complete subdivision if it becomes a complete subdivision after pulling back along a strict morphism $\Sigma\to \cM_{g,n}^{\trop}$ from a cone complex. One analogously extends root constructions and subcomplexes. Complete subdivisions give rise to proper, representable, birational modifications of $\Mbar_{g,n}$; the passage to a sublattice gives rise to a generalized root construction along the boundary of $\Mbar_{g,n}$; passage to a subcomplex corresponds to an open immersion obtained by removing a collection of closed boundary strata.

\subsection{The transverse cycle} Consider the morphism of tropical moduli stacks 
\[
\mathsf{DR}_g^{\trop}(\mathbf x, k)\hookrightarrow \cM_{g,n}^{\trop}. 
\]
By the equivalence of categories in~\cite[Section~6.11]{CCUW} this determines a not necessarily proper, birational transformation of the Artin fan of $\Mbar_{g,n}$ of Deligne--Mumford type. By pulling this back to $\Mbar_{g,n}$ we obtain a stack equipped with a birational morphism of Deligne--Mumford type to $\Mbar_{g,n}$: 
\[
\cM_{g}^\sfbu(\mathbf x, k)\to \Mbar_{g,n}.
\]

At each point on $\leak$, the tropical curve $\Gamma$ is equipped with a piecewise linear function, by viewing $\Gamma\to T$ as a function well-defined up to translation. By the main results of the paper of Marcus and Wise~\cite{MW17} there is an associated Abel--Jacobi map:
\[
\mathsf{aj}_{\mathbf x}:\cM_{g,n}^\sfbu(\mathbf x, k)\to \mathbf{Pic},
\]
where the codomain is taken to be the universal Picard scheme of the universal curve over $\cM_{g,n}^\sfbu(\mathbf x, k)$. 

We would like to pull back in Chow homology along this map. We take a moment to explain how to think about why this pullback exists. First, Marcus--Wise prove that the map $\mathsf{aj}_{\mathbf x}$ is finite and unramified~\cite[Section~4.3]{MW17}; its cotangent complex therefore vanishes in degrees smaller than $-1$, so it is a lci morphism. The map therefore carries a Gysin pullback. We will use this in a moment.

Though we will not need it, the way we set up this problem, the logarithmic structure on the domain is saturated, and in this case the map above is actually a section of a smooth fibration, so it is a regular embedding rather than merely a local complete intersection. See~\cite[Section~6.2]{BHPSS}. 

In fact, there is yet another way to think about the pullback, which can be useful in practice. The cone complex $\mathsf{DR}_g^{\trop}(\mathbf x, k)$ is simplicial: a complete set of coordinates for the cone associated to a combinatorial type is given by the target edge lengths and the lengths of the contracted edges in the source graph. A simplicial cone complex determines both an Artin fan with toric orbifold singularities, and a smooth cover of such an Artin fan by a morphism of Deligne--Mumford type. Indeed, a simplicial affine toric variety has a canonical such resolution, see for instance~\cite[Section~2.3]{Mol16}. If we take the latter perspective, the space $\cM_{g,n}^\sfbu(\mathbf x, k)$ is smooth as a Deligne--Mumford stack. The target of the Abel--Jacobi map is a smooth fibration over this Deligne--Mumford stack. In this case, the pullback in Chow homology defined simply by applying Poincar\'e  duality, pulling back in cohomology, and applying Poincar\'e duality again.

\begin{remark}
In this paper, we typically take the view that simplicial fans give rise to {\it smooth} stacks rather than singular {\it varieties}. However, since we work with rational coefficients, this difference is mild; this is explained in the introduction to and in Section~6 of Vistoli's paper~\cite{Vi89}.
\end{remark}

Equipped with this pullback, we have for each $k$, maps
\[
\cM_{g,n}^\sfbu(\mathbf x, k)\to \mathbf{Pic},
\]
obtained by sending a curve $C$ to the pluricanononical line bundle $\omega_C^{\mathrm{log},\otimes k}$ of the marked logarithmic curve. We refer to it as the $k$-pluricanonical section $\sigma_k$.

\begin{definition}\label{def: DR-spce}
The \textit{double ramification space} for the data $\mathbf x$ is the pullback along $\mathsf{aj}_{\mathbf x}$ of the image of the $k$-pluricanonical section. Diagramatically, it is the fiber product
\[
\begin{tikzcd}
\mathsf{DR}_g(\mathbf x,k)\arrow{r}\arrow{d} & \cM_{g,n}^\sfbu(\mathbf x, k)\arrow{d}{\sigma_k}\\
\cM_{g,n}^\sfbu(\mathbf x, k)\arrow{r}[swap]{\mathsf{aj}_{\mathbf x}} &  \mathbf{Pic}.
\end{tikzcd}
\] 
It is equipped with a virtual fundamental class by refined Gysin pullback:
\[
[\mathsf{DR}_g(\mathbf x,k)]^{\mathsf{vir}} = {\mathsf{aj}_{\mathbf x}}^![\cM_{g,n}^\sfbu(\mathbf x, k)].
\]
The class will be called the \textit{logarithmic double ramification cycle}.
\end{definition}

The space $\mathsf{DR}_g(\mathbf x,k)$ is proper over $\Spec(\mathbb C)$ by~\cite[Corollary~E]{MW17}. The pushforward of the logarithmic double ramification cycle to $\Mbar_{g,n}$ is the usual \textit{double ramification cycle}, though we will essentially never work with this pushforward, and instead work with intersections with classes on open subsets of \textit{blowups} of the moduli space, i.e. directly on $\cM_{g,n}^\sfbu(\mathbf x, k)$.

\subsection{Piecewise polynomials} The main players in this paper are intersection numbers of the logarithmic double ramification cycle with cohomology classes coming from the tropical moduli space. The classes are described using piecewise polynomials. 

Let $X$ be a fine and saturated logarithmic scheme or stack. There are two basic combinatorial objects associated to $X$. At each point $x\in X$, there is an associated characteristic monoid $P_x$. The \textit{cone} at $x$ is defined to be the dual cone $\sigma_x$ to $P_x$. The \textit{Artin cone} at $x$ is the Artin stack $\mathsf{A}_x =  \Spec k[P_x]/k[P_x^{\mathrm{gp}}]$. We then define
$$
\Sigma_X = \varinjlim \sigma_x \\
$$
and 
$$
\mathsf{A}_X = \varinjlim \mathsf{A}_x
$$
to be the \textit{cone complex} and \textit{Artin fan} of $X$. Note that the colimits are taken over the diagram of all points in $X$ under generization maps, and are taken respectively in the category of cone stacks and in logarithmic algebraic stacks. 

A \textit{piecewise polynomial function} on $\Sigma_X$ is the assignment of a polynomial function
\[
\sigma_x\to\RR
\]
with rational coefficients on each cone $\sigma_x$ that is compatible with the description of $\Sigma_X$ as a colimit. The notion is independent of presentation of $\Sigma_X$. Piecewise polynomials on $\Sigma_X$ form a ring denoted $\mathsf{PP}^\star(\Sigma_X)$. Piecewise polynomial functions are related to Chow rings of Artin stacks. A basic result proved in~\cite{MPS21,MR21} is the following.

\begin{theorem}
The Chow cohomology of the Artin stack $\mathsf A_X$ is isomorphic to the ring $\mathsf{PP}^\star(\Sigma_X)$.
\end{theorem}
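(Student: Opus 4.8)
The plan is to build an explicit ring map $\mathsf{PP}^\star(\Sigma_X)\to \mathsf{CH}^\star(\mathsf A_X)$ using the combinatorial presentation of $\mathsf A_X$ as a colimit of the Artin cones $\mathsf A_x = \Spec k[P_x]/k[P_x^{\mathrm{gp}}]$, and then check it is an isomorphism cone-by-cone. For a single Artin cone $\mathsf A_x = [\A^r/\Gm^r]$ (after choosing generators for $P_x$, allowing orbifold quotients when $P_x$ is not free), the equivariant Chow ring is the polynomial ring $\QQ[t_1,\dots,t_r]$ on the first Chern classes of the coordinate line bundles, and one identifies this with polynomial functions on the cone $\sigma_x = \RR_{\geq 0}^r$ by sending $t_i$ to the $i$-th coordinate function. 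So first I would set up this local dictionary carefully, including the case of simplicial but non-smooth cones (where one passes to a finite cover or works with $\QQ$-coefficients, as in the remark citing Vistoli), and verify the assignment is a ring isomorphism $\mathsf{CH}^\star(\mathsf A_x)\cong \mathsf{PP}^\star(\sigma_x)$ where the right side is just honest polynomials on the cone.

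Second, I would check compatibility with the generization (face) maps. A face inclusion $\sigma_y\hookrightarrow \sigma_x$ corresponds to an open immersion $\mathsf A_y\hookrightarrow \mathsf A_x$ of Artin cones (restricting to the open stratum), and on Chow cohomology the restriction map $\mathsf{CH}^\star(\mathsf A_x)\to \mathsf{CH}^\star(\mathsf A_y)$ sends the generators corresponding to coordinates that survive to the corresponding generators and kills those whose toric divisors do not meet $\mathsf A_y$ --- which is exactly the restriction of polynomial functions along the face inclusion. Granting this, the local isomorphisms assemble: since $\mathsf A_X = \varinjlim \mathsf A_x$ and Chow cohomology of an Artin stack presented as such a colimit is the inverse limit of the $\mathsf{CH}^\star(\mathsf A_x)$ along restriction (this is where I would invoke the relevant descent/limit statement for Chow cohomology of Artin fans, e.g. as developed in the cited works), one gets
\[
\mathsf{CH}^\star(\mathsf A_X) \;\cong\; \varprojlim_x \mathsf{CH}^\star(\mathsf A_x) \;\cong\; \varprojlim_x \mathsf{PP}^\star(\sigma_x) \;=\; \mathsf{PP}^\star(\Sigma_X),
\]
where the last equality is the definition of a piecewise polynomial function as a compatible family of polynomials on the cones. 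Finally I would note that the resulting map is independent of the chosen presentation of $\Sigma_X$, since any two presentations are cofinal in the diagram of cones.

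The main obstacle I expect is \emph{not} the local computation on a single Artin cone --- that is a standard equivariant Chow calculation --- but rather establishing that Chow cohomology genuinely commutes with the relevant colimit, i.e. that $\mathsf{CH}^\star(\varinjlim \mathsf A_x)$ is computed as $\varprojlim \mathsf{CH}^\star(\mathsf A_x)$. Artin fans are non-finite-type (the colimit is typically infinite, e.g. for $\cM_{g,n}^\trop$), so one must be careful about which flavour of Chow cohomology is used and argue, perhaps via a presentation by an increasing union of finite-type opens together with a Mittag--Leffler or stabilization argument, that higher inverse limits do not contribute and that operational classes on the whole fan are detected on the pieces. A secondary technical point is handling orbifold/simplicial cones uniformly: one should either restrict attention to the smooth (free monoid) case and invoke that the difference is invisible rationally, or carry the $\Gm$-quotient stacks with finite stabilizers throughout. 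For the purposes of this paper it suffices to cite \cite{MPS21,MR21} for the full statement and to recall the local dictionary $t_i\leftrightarrow$ (coordinate on $\sigma_x$), since that is the concrete form in which the isomorphism is used later.
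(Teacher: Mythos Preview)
The paper does not actually prove this theorem: it is quoted as a known result from \cite{MPS21,MR21} and used as a black box thereafter. So there is no paper-proof to compare against; your task here is really just to supply what the cited references contain.

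That said, your outline is essentially the argument given in those references. The local identification $\mathsf{CH}^\star([\A^r/\Gm^r])\cong \QQ[t_1,\dots,t_r]$ and its compatibility with face restrictions is exactly how the map is built, and the global statement is obtained by passing to the (co)limit. Your caution about the colimit step is well placed: in \cite{MR21} this is handled by working with finite-type truncations of the Artin fan (equivalently, bounding the number of edges of the tropical curve) and observing that any given operational class and any given piecewise polynomial live on some finite truncation, so the inverse limit is benign. You should also note that the simplicial-but-not-smooth case is dealt with rationally, as the paper itself remarks via the citation of Vistoli, so your secondary technical point dissolves with $\QQ$-coefficients. For the purposes of this paper, citing \cite{MPS21,MR21} is the intended proof.
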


If $\Sigma'\to \Sigma$ is a subdivision, there is an injective pullback map
\[
\mathsf{PP}^\star(\Sigma)\to \mathsf{PP}^\star(\Sigma'),
\]
obtained by identifying functions on $\Sigma$ as functions on $\Sigma'$; if $X'\to X$ is the proper birational map induced by this subdivision, the pullback map on Chow cohomology is precisely this pullback. Essentially by definition, there is a morphism of logarithmic stacks
\[
X\to \mathsf A_X,
\]
and there is a cohomological pullback map
\begin{equation}\label{eq:cohpb}
\mathsf{PP}^\star(\Sigma_X)\to \mathsf{CH}^\star_{\mathsf{op}}(X),
\end{equation}
preserving the degree. Piecewise polynomial functions are cohomology classes on Artin fans, and therefore exhibit the expected contravariant functoriality properties.  See~\cite[Appendix~C]{BS22} for details on operational Chow rings for algebraic stacks. 

Let us consider a proper stack $V$ of pure dimension $m$, equipped with a map $g: V\to X$. We obtain an operator:
\begin{eqnarray*}
\varphi_{V}: \mathsf{PP}^m(\Sigma_X)&\to& \QQ\\
f&\mapsto& \int_{V} g^\star f
\end{eqnarray*}
The logarithmic double ramification cycle similarly gives rise to an operator, where we use the virtual fundamental class to replace the pure dimensionality condition above.

\begin{definition}
We denote by $$\mathsf{dr}_g^\lightning(\mathbf x, k): \mathsf{PP}^g(\mathsf{DR}_g^{\trop}(\mathbf x, k))\to \QQ$$ the operator defined by pulling back the piecewise polynomial to the proper space $\mathsf{DR}_g(\mathbf x,k)$, capping with the virtual fundamental class, and then taking degree.
\end{definition}

A basic question, which the present paper begins to answer, is to determine an efficient calculus for evaluating the operator on a given piecewise polynomial.

\subsection{Tropical evaluation map}\label{sec: tropical-evaluations} Given a leaky cover $\Gamma\to T$, we can remember only the target line graph $T$. There is a simple parameter space for such objects. Informally, it is obtained from $\RR_{\geq 0}^\infty$ by identifying all faces of equal dimension. 

More formally, consider the diagram $D$ of cones with cones given by a single $\RR_{\geq 0}^e$ for each $e\geq 0$, with exactly $e+1$ arrows, named $w_1^{(e)},\ldots, w_{e+1}^{(e)}$, from $\mathbb R_{\geq 0}^e$ to $\mathbb R_{\geq 0}^{e+1}$. The arrow $w_j^{(e)}$ is the face inclusion obtained by inserting $0$ into the $j^{th}$ coordinate. The diagram defines a cone space according to~\cite[Definition~2.12]{CCUW}, which we denote by $\mathsf{tEx}$. There is a natural morphism 
\[
\leak\to\mathsf{tEx}
\]
obtained by interpreting the codomain as a moduli space for metric line graphs, and then sending a cover to its target line graph. 

To see the moduli interpretation, observe that the set of line graphs with exactly $e+1$ vertices has $e$ edges and is parameterized by the interior cell in $\RR_{\geq 0}^e$. However, each interior point of the $e$ codimension $1$ cells can naturally be seen to parameterize line graphs with $e-1$ edges. Similarly, the codimension $2$ cells parameterize line graphs with $e-2$ edges and so on. The cone stack above is exactly obtained by gluing all these faces together, and we make the claimed identification as a consequence. A subdivision of $\RR$ is given by prescribing vertices at positions $t_0,\ldots, t_e$. Two such prescriptions give the same subdivision when \textit{all} the differences $t_i-t_{i-1}$ coincide. We can thus normalize to set $t_0 = 0$. The space $\mathsf{tEx}$ makes the resulting identifications. A detailed treatment will appear in~\cite{STR22}. See also~\cite{ACFW,Oes19}.

\subsection{Geometric evaluations} We have exhibited a morphism of cone complexes
\[
\mathsf{DR}_g^{\trop}(\mathbf x,k)\to \mathsf{tEx}.
\]
The cone space $\mathsf{tEx}$ determines an Artin fan which we denote   $\mathsf{Ex}$. The stack $\mathsf{Ex}$ has appeared elsewhere: it is isomorphic to the stack of expansions of an unparameterized $\mathbb P^1$ along $0$ and $\infty$, and can be identified with a natural open substack of $\mathfrak M_{0,2}^{\mathsf{ss}}$ of semistable $2$-pointed curves, see~\cite{ACFW,GV05}. A treatment via the correspondence between tropical geometry and Artin fans may be obtained by a rubber variant of the stack of expansions in~\cite{MR20}. 

By construction, the tropical evaluation map of Section~\ref{sec: tropical-evaluations} determines a morphism of stacks
\[
\mathsf{ev}: \cM_{g,n}^\sfbu(\mathbf x,k)\to \mathsf{Ex}. 
\]
The virtual class $\mathsf{DR}_g(\mathbf x,k)$ is a refined class with proper support in the domain of this morphism $\mathsf{ev}$. In the sequel, we intersect this virtual class with a cohomology class of codimension $2g-3+n$ on $\mathsf{Ex}$, specified by a piecewise polynomial function on the tropical space $\mathsf{tEx}$.

\section{Double Hurwitz numbers revisited}\label{sec:dhn}
This section uses the perspectives developed in Section \ref{sec:tcpp}  to recast the classical enumerative geometric problem of double Hurwitz numbers. In the first subsection, which may be skipped by the experts, we recall the notion of double Hurwitz numbers and provide a brief overview of some connections between double Hurwitz numbers and intersection theory on moduli spaces. In the second subsection we view double Hurwitz numbers as the degree of an intersection cycle on the logarithmic double ramification cycle. The third subsection shows how to extract double Hurwitz numbers of genus $h\leq g$ as natural evaluations against the logarithmic cycle.

\subsection{A few perspectives on Hurwitz numbers} 
Let $\mathbf{x}\in \ZZ^n \smallsetminus \{\underline 0\}$ be a vector of integers adding to zero. The {\it double Hurwitz number} $H_g(\mathbf{x})$ counts the number of covers of $\CC \PP^1$ by a curve of genus $g$, with ramification profiles 
\[
\mathbf{x}^{-} = \{x_i|x_i<0\} \ \ \textrm{over $\infty$}, \ \ \textrm{and} \ \ \ \mathbf{x}^{+} = \{x_i|x_i>0\} \ \  \textrm{over $0$},
\] 
and simple ramification over $r = 2g-2+n$ fixed points of $\PP^1$. 

It is standard to weight every cover by the size of its automorphism group;  the ramification data $\mathbf{x}$ is a vector, rather than a multi-set. Note in particular that the inverse images of $0$ and $\infty$ in the covering curve are labelled, and cannot be interchanged by automorphisms.
Double Hurwitz numbers have incarnations in many different mathematical areas. For example, they may be viewed as counting factorizations of the identity in the symmetric group or as the result  of a multiplication problem in the class algebra of the symmetric group. The piecewise polynomial structure of double Hurwitz numbers was first obtained by interpreting them as counts of decorated ribbon graphs, which naturally led to a translation to the count of lattice points inside appropriate polytopes~\cite{GJV}. 

In this section, we focus our attention on some perspectives that connect double Hurwitz numbers to the geometry of the moduli space of curves.


\subsubsection{Open moduli of covers} Let $\mathsf{Hurw}_g(\mathbf{x})$ denote the  Hurwitz space of isomorphism classes of smooth covers of $\CC\PP^1$ with specified discrete invariants as in the previous paragraph, and with exactly $r$ distinct simple branch points. There is a natural {\it branch morphism} 
\begin{equation}
br: \mathsf{Hurw}_g(\mathbf{x}) \to [\cM_{0,r+2}/S_r]
\end{equation}
 recording the position of the branch points. By construction, the number $H_g(\mathbf{x})$ is the degree of the branch morphism. Although elementary, this perspective does not yield a method of calculation.  

\subsubsection{Compact moduli of covers} The source and target of the branch morphism admit natural compactifications. In \cite{cm:dhn}, the authors consider the branch morphism from a moduli space of rubber relative stable maps to the symmetrized Losev--Manin compactification of the space of $r+2$ points on $\mathbb P^1$, where $r$ is the expected number of simple branchings:
\[
br: \overline{\cM}_{g}^\sim(\PP^1; \mathbf{x}^-[0],  \mathbf{x}^+[\infty])\to [LM(r)/S_r]
\]
The maps are summarized in Figure~\ref{fig:dia}. 

The double Hurwitz number $H_g(\mathbf{x})$ is the degree of the \rcolor{zero-dimensional} cycle $br^\star([pt])$. The pushforward of this cycle to $\Mbar_{g,n}$ is again a cycle whose degree is the Hurwitz number. Once the cycle has been placed on $\Mbar_{g,n}$, it admits a natural formula. Specifically, by using cotangent line comparison methods, one obtains an expression for this cycle in terms of $\psi$-classes. The result is a sum of boundary strata decorated with $\psi$ classes, each with a coefficient which is a polynomial in the entries of $\mathbf{x}$. The formula exhibits the piecewise polynomiality of $H_g(\mathbf{x})$, first proved in~\cite[Theorem 2.1]{GJV}.

\begin{figure}[tbp]
$$\xymatrix{
\overline{\cM}_{g}^\sim(\PP^1; \mathbf{x}^-[0],  \mathbf{x}^+[\infty])   \ar[r]^{\ \ \ \ \ \ \ \ \ \ \ sr.} \ar[d]^{br} & \Mbar_{g,n}\\
[LM(r)/S_r]
} 
$$
\caption{The tautological diagram used to express $H_g(\mathbf{x})$ as an intersection number on $\Mbar_{g,n}$.}
\label{fig:dia}
\end{figure}

\subsubsection{Via target degenerations} An alternative approach to the degree of $br^\star([pt])$ is to first choose a zero-dimensional boundary stratum  $\Delta \in [LM(r)/S_r]$ as a representative for the class of a point. The preimage consists of a chain of $r$ projective lines, attached at nodes. The two special branch points with ramification profiles $\mathbf{x}^\pm$ map to opposite external components of the chain, and there is exactly one simple branch point on each component of the chain. For any inverse image $[f: C\to T]\in br^{-1}(\Delta)$, the irreducible components of $C$ are rational and contain either two or three special points, where a special point is either a node or a marking. The degree of
$br^\star([pt])$ is then obtained by counting each inverse image $[f: C\to T]$ with the multiplicity prescribed by the {\it degeneration formula} (see \cite[Theorem 4.5]{GV05}  or \cite[Theorem 3.15]{Li02}). 

\subsubsection{Via tropical geometry} The degeneration formula provides a natural bridge to tropical geometry. The dual graphs of the source curves of maps   $[f: C\to T]\in br^{-1}(\Delta)$ are naturally identified with combinatorial types of tropical covers  $F: \Gamma \to \RR$ of the tropical line. The expansion factors of the edges correspond to the ramification orders of the corresponding nodes, which are well-defined by the pre-deformability condition in relative Gromov--Witten theory. The identification gives a bijection between the points $[f: C\to T]\in br^{-1}(\Delta)$ and the {\it monodromy graphs} of \cite{CJM1} (see also Definition \ref{def-mongraph}), which may be interpreted as the inverse images of a  \rcolor{single} maximal cone via a tropical branch morphism \rcolor{$br^{\trop}$}, a map of equidimensional cone complexes, \rcolor{where each cone is endowed with an integral lattice}. The local degree \rcolor{of the map $br^{\trop}$} at a maximal dimensional cone $\sigma_F$ \rcolor{is defined to be} the lattice index of the image of the integral lattice of $\sigma_F$ inside the integral lattice of $br^{\trop}(\sigma_F)$.\rcolor{The degree of $br^{\trop}$, a.k.a. the {\it tropical Hurwitz number}, is obtained by adding up the local degrees over all inverse images of a maximal cone in the target}. The correspondence theorem between algebraic and tropical Hurwitz numbers follows from the fact that the local degree of the tropical branch morphism at $\sigma_F$ equals the degeneration formula multiplicity for the corresponding algebraic cover $f:C\to T$. The perspective here generalizes to all Hurwitz numbers via admissible covers, and to the full stationary descendant theory of $\mathbb P^1$ with certain atomic intersection theory inputs, see~\cite{CMR14a,CJMR}. 

\rcolor{
\subsection{Hurwitz numbers as intersection numbers in $\mathsf{CH}_{\mathsf{log}}^\star(\Mbar_{g,n})$}
The present section is an original contribution of this paper to the story of Hurwitz numbers. We  present a new perspective on  double Hurwitz numbers as intersection numbers in the logarithmic Chow ring of the moduli spaces of curves. We begin by recalling some notation and facts from the tropical computation of double Hurwitz numbers in \cite{CJM1} that play a role in the upcoming constructions. 

\begin{definition}[Monodromy graphs]
\label{def-mongraph}
For fixed $g$ and $\mathbf{x}=(x_1,\ldots,x_n)$, a graph $\Gamma$ is a \emph{monodromy graph of type $(g, \mathbf{x})$}  if:
\begin{enumerate}
\item $\Gamma$ is a connected, directed graph,\rcolor{with first Betti number equal to $g$}. 
\item $\Gamma$ has $n$ ends 
which are directed inward, and labeled by the expansion factors $x_1,\ldots,x_n$. If $x_i >0$, we say it is an \emph{in-end}, otherwise it is an \emph{out-end}. 
\item \rcolor{Vertices of $\Gamma$ of valence greater than $2$ are exactly} $3$-valent.
\item After reversing the orientation of the out-ends, $\Gamma$ does not have sinks or sources. The  vertices are ordered compatibly with the partial ordering now induced by the directions of the edges.
\item Every bounded edge $e$ of the graph is equipped with an expansion factor $w(e)\in \NN$.  \rcolor{Each integer  $x_i$ should be thought as an expansion factor for the corresponding  end of $\Gamma$, and its  sign should be switched when reversing the orientation of the end.} These satisfy the \emph{balancing condition} at each $3$-valent vertex: the sum of all expansion factors of  incoming  edges equals the sum of the expansion factors of all outgoing edges.
\end{enumerate}
\end{definition}

Monodromy graphs remember the combinatorial information that is needed to compute the double Hurwitz numbers.

\begin{theorem}[{\cite[Corollary~4.4]{CJM1}}] \label{thm:thn}
The double Hurwitz number $H_{g}(\mathbf{x})$ equals the sum over all monodromy graphs $\Gamma$ of type $(g, \mathbf{x})$, where each is given multiplicity $m_\Gamma$  equal the product of the expansion factors of its bounded edges:
$$H_{g}(\mathbf{x}) = \sum_{\Gamma} \frac{1}{|\Aut(\Gamma)|} \prod_e \omega(e).$$\end{theorem}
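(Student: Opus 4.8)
The plan is to reproduce the statement of Theorem~\ref{thm:thn} by identifying the monodromy graphs of type $(g,\mathbf{x})$ with the top-dimensional cones of a tropical branch morphism, and then matching the algebraic degeneration-formula multiplicities with the combinatorial multiplicities $\prod_e \omega(e)$. Concretely, I would start from the degeneration of the Losev--Manin target $[LM(r)/S_r]$ to a zero-dimensional boundary stratum $\Delta$, as recalled in the ``via target degenerations'' subsection: the preimage of $\Delta$ under $br$ is a chain of $r$ copies of $\PP^1$, each carrying exactly one simple branch point, with $\mathbf{x}^{\pm}$ distributed on the outermost components. The degeneration formula of~\cite{GV05,Li02} expresses $\deg(br^\star[pt])$ as a sum over the rigid maps $[f:C\to T]$ in $br^{-1}(\Delta)$, each weighted by the product of the gluing multiplicities at the nodes (the contact orders, i.e.\ the ramification orders) divided by the order of the automorphism group.

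The second step is the combinatorial dictionary: to each such $[f:C\to T]$ associate the dual graph $\Gamma$ of $C$, directed by the total order on the chain components of $T$ (equivalently, by the images of the vertices in $\RR$). Since the target has one simple branch point per component and $r = 2g-2+n$ components, each internal vertex of $\Gamma$ is trivalent by Riemann--Hurwitz on the component; the marked points give the $n$ labelled ends with weights $x_i$; pre-deformability forces the balancing condition at each trivalent vertex; and the first Betti number of $\Gamma$ is $g$ by an Euler characteristic count. This is exactly the list of conditions in Definition~\ref{def-mongraph}, so $[f:C\to T]\mapsto \Gamma$ gives a bijection between $br^{-1}(\Delta)$ and the monodromy graphs of type $(g,\mathbf{x})$, compatible with automorphisms. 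I would then invoke the identification of these data with the maximal cones of the tropical branch morphism $br^{\trop}$, already recalled in the ``via tropical geometry'' subsection, so that $H_g(\mathbf{x}) = \deg(br^{\trop})$ is the sum of local degrees over maximal cones.

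The third step is the local multiplicity comparison, which I expect to be the main obstacle --- though the excerpt has essentially isolated it for us. On the algebraic side, the degeneration formula weights $[f:C\to T]$ by $\frac{1}{|\Aut(f)|}\prod_e \omega(e)$, where the product runs over nodes of $C$ and $\omega(e)$ is the ramification order there. On the tropical side, the local degree of $br^{\trop}$ at $\sigma_\Gamma$ is the lattice index $[\,\Lambda_{br^{\trop}(\sigma_\Gamma)} : br^{\trop}_*\Lambda_{\sigma_\Gamma}\,]$. The claim to verify is that this lattice index equals $\prod_e \omega(e)$. This is a linear-algebra computation: a point of $\sigma_\Gamma$ is a choice of bounded-edge lengths of $\Gamma$ subject to the cycle-closing relations, and $br^{\trop}$ records the successive differences of the (ordered) vertex images in $\RR$; the slope of the edge traversed between consecutive vertices is the corresponding expansion factor, so the Jacobian of $br^{\trop}$ in suitable coordinates is triangular with the $\omega(e)$ on the diagonal, whence the index is $\prod_e \omega(e)$. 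One must be careful that in the presence of loops the relevant coordinates are the target edge lengths together with the contracted source edges, matching the simplicial structure recorded in Section~\ref{sec:leakymod}; the index computation then reduces to a gcd-of-minors statement exactly as in Example~\ref{ex-polyhedron}.

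Finally, assembling these pieces: $H_g(\mathbf{x}) = \deg(br^\star[pt])$ by definition of the double Hurwitz number; this equals $\sum_{[f:C\to T]} \frac{1}{|\Aut(f)|}\prod_e \omega(e)$ by the degeneration formula; the bijection of step two rewrites the sum as $\sum_\Gamma \frac{1}{|\Aut(\Gamma)|}\prod_e \omega(e)$ over monodromy graphs, which is the assertion. The only genuine content beyond bookkeeping is the identity ``degeneration-formula multiplicity $=$ tropical local degree $= \prod_e \omega(e)$,'' and since this is precisely the correspondence-theorem input cited from~\cite{CJM1,CMR14a} in the excerpt, the proof can either reproduce the short linear-algebra argument above or simply quote~\cite[Corollary~4.4]{CJM1} directly.
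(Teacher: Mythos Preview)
The paper does not give its own proof of this statement: Theorem~\ref{thm:thn} is simply quoted from \cite[Corollary~4.4]{CJM1} as background, and the surrounding subsections ``Via target degenerations'' and ``Via tropical geometry'' summarize exactly the argument you sketch. Your proposal is a correct reconstruction of that argument---the bijection between $br^{-1}(\Delta)$ and monodromy graphs, and the identification of the degeneration-formula multiplicity with the lattice index of $br^{\trop}$, which is in turn computed as $\prod_e \omega(e)$ via a triangular Jacobian.

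One small inaccuracy: your remark that ``in the presence of loops the relevant coordinates are the target edge lengths together with the contracted source edges'' does not apply here, since monodromy graphs (Definition~\ref{def-mongraph}) have all expansion factors strictly positive and hence no contracted edges. The cones $\sigma_\Gamma$ for monodromy graphs are simplicial with coordinates given by the $r-1$ target edge lengths alone; the cycle-closing conditions are what cut the cone out inside the space of source edge lengths, and the triangular-determinant computation you describe is carried out in that latter presentation (this is \cite[Section~5.25]{CJM1}, to which the paper later refers when proving the analogous statement for leaky covers). This does not affect the correctness of your argument, only the phrasing.
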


In \cite{CJM1} the above weighted sum over monodromy graphs is further shown to be the degree of a tropical branch morphism, that we now recall.

Consider the moduli space of tropical relative rubber stable maps  $\TSM$, i.e. the $k=0$ case of the spaces constructed  in Section \ref{sec:leakymod}; there is a natural bijection between monodromy graphs and maximal cones of $\TSM$ parameterizing covers where all expansion factors are strictly positive.
Denoting $r = 2g-2+n$, there is a branch morphism \begin{equation} \label{eq:tropbr}br_{\trop}: \TSM \to \mathsf{tEx}, 
\end{equation}
to the tropical evaluation space of Section~\ref{sec: tropical-evaluations}, \rcolor{and a stabilization morphism
\begin{equation}
st_{\trop}: \TSM\to \cM_{g,n}^{\trop}.
\end{equation}}
Concretely, the target of the branch morphism parameterizes metric line graphs, or equivalently, polyhedral decompositions of $\mathbb R$ by finitely many vertices, considered up to a global shift. In this latter perspective, by normalizing so that the \rcolor{smallest} point is $0\in \RR$, the interior of the cone parameterizing targets with $r$ vertices may be coordinatized as $$\sigma = \{0< t_1< t_2 <\ldots < t_{r-1}\}\subset \RR^{r-1}.$$

Another useful characterization of the cones in $\TSM$ indexed by monodromy graphs is that $br^{\trop}$ maps them onto the $(r-1)$-dimensional cone of $ \mathsf{tEx}$.

\begin{definition}
We denote by $\TMB$ any simplicial subdivision of the moduli space of tropical curves $\mathcal{M}_{g,n}^\trop$ containing $st_{\trop}(\TSM)$ (or a refinement thereof) as a subcomplex.
\end{definition}
Such a subdivision certainly exists if one does not worry about possibly subdividing the cones of $st_{\trop}(\TSM)$. In fact one may adapt the construction in \cite[Section 7.3]{HMPPS} to construct a subdivision where there is no need to refine the cones of $st_{\trop}(\TSM)$ corresponding to monodromy graphs, which are already simplicial, see~\cite{Mol22}.

For any ray $\rho$ of $\TSM$,  denote by  $\varphi_\rho$  the piecewise linear function with slope $1$ along $\rho$, and slope $0$ along any other ray of $\TMB$.

\begin{definition}
We define the {\it branch  polynomial} $br_g(\mathbf x, 0)$ to be the following piecewise polynomial function on $\TMB$:
$$
br_g(\mathbf x, 0):=\sum_{\sigma_\Gamma } m_\Gamma\cdot  \prod_{\rho\subseteq \sigma_\Gamma}\varphi_{\rho},
$$
where the sum runs over all cones of $\TMB$ corresponding to monodromy graphs  and the multiplicities $m_\Gamma$ are as defined in Theorem \ref{thm:thn}.
\end{definition}

A key property of the branch polyomial (and the reason for the name) is that when restricted to  $\TSM$ one has:
\begin{equation}\label{eq:brprop}
br_g(\mathbf x, 0)_{|\TSM} = br_{\trop}^\star\left(t_1\prod_{i=2}^{r-1} (t_i-t_{i-1})\right).
\end{equation}
Observe that both functions vanish outside cones corresponding to monodromy graphs. The  expression on the left hand side by definition, the one on the right hand side because such cones map to lower dimensional faces of $\mathsf{tEx}$, where at least one of the terms $t_i-t_{i-1}$ vanishes.

We have all the ingredients to state the main theorem in this section.

\begin{customthm}{A} \label{thm:A}
With notation as  throughout this section, we have
\begin{equation} \label{eq:prodoc}
H_g (\mathbf{x})= \deg\left(
\LDR \cdot br_g(\mathbf x, 0)
\right).
\end{equation}
\end{customthm}
\begin{proof}
Recall from Definition \ref{def: DR-spce} that the subdivision $\TMB$ gives rise to a double ramification space with a morphism $st: \LDR \to \cM_{g,n}^\sfbu(\mathbf x,0)$. By projection formula, 
\begin{equation}
\deg\left(\LDR \cdot br_g(\mathbf x, 0)\right)=  \deg(st^\star(br_g(\mathbf x, 0))).
\end{equation}
Since the assignment of a cohomology class to a piecewise polynomial function is functorial, we have
\begin{equation} \label{funct}
 st_{\trop}^\star(br_g(\mathbf x, 0)) =  st^\star(br_g(\mathbf x, 0)) ,
\end{equation}
where in the left hand side of \eqref{funct}, $br_g(\mathbf x, 0)$ is regarded as piecewise polynomial function on $\TMB$, whereas on the right hand side as a cohomology class on $\cM_{g,n}^\sfbu(\mathbf x,0)$(namely the image of the former via the cohomological pullback from \eqref{eq:cohpb}).

As observed in \eqref{eq:brprop}, 
\begin{equation}\label{eq:stbr}
st_{\trop}^\star(br_g(\mathbf x, 0)) = br_{\trop}^\star\left(t_1\prod_{i=2}^{r-1} (t_i-t_{i-1})\right),\end{equation}
where the expression  in the parenthesis in the right hand side is a (piecewise) polynomial function on $\mathsf{tEx}$.

Consider the stack quotient  $\left[LM(r)/S_r\right]$ of the Losev--Manin moduli space by the symmetric group $S_r$. Points of this space correspond to stable chains of projective lines, where the stability condition demands that each element of the chain contains at least $1$ of the $r$ light points, and that the $r$ points may coincide but must be smooth. By forgetting the $r$ points, we obtain a point of $\mathsf{Ex}$, since the latter parameterizes expansions of a rubber $\mathbb P^1$ along $0$ and $\infty$. 

Consider the chain of morphisms:
\begin{equation}\label{LM-EX}
\LDR \stackrel{\pi}{\to}\mathsf{DR}_g(\mathbf x, 0) \stackrel{br}{\to}\left[LM(r)/S_r\right]\stackrel{p}{\to} \mathsf{Ex}
\end{equation}

The polynomial function $t_1\prod_{i=2}^{r-1} (t_i-t_{i-1})$  on $\mathsf{tEx}$ determines a cohomology class on $\mathsf{Ex}$ which pulls back along $p$ from \eqref{LM-EX} to the class of the unique closed stratum in $\left[LM(r)/S_r\right]$, which is equivalent to the class of a point. From \eqref{eq:stbr} one then obtains:

\begin{align} \label{boh3}
\deg( st_{\trop}^\star\left(br_g(\mathbf x, 0)\right)) = \deg(\pi_\star st_{\trop}^\star\left(br_g(\mathbf x, 0)\right))   = \nonumber \\
 \deg\left(br^\star p^\star \left(t_1\prod_{i=2}^{r-1} (t_i-t_{i-1})\right)\right)= \deg(br^\star\left([pt]\right)) = H_{g}(\mathbf{x}),
\end{align}
thus concluding the proof of the Theorem.
\end{proof}

\subsection{Geometric interpretation}

In this section we give a geometric, meaning ``non-virtual'', interpretation of  Theorem \ref{thm:A} by exhibiting \eqref{eq:prodoc} as the intersection of tautological cycles 
 on a birational model of the moduli space of curves. In this birational model, we consider the actual strict transform of the double ramification cycle, namely the closure of the locus of maps from smooth curves. In this space, it possible to find a collection of strata whose intersection with this closure is a zero dimensional cycle of degree equal to $H_g{(\mathbf{x})}$. To make the exposition more natural, we begin by giving a name to the subcomplex of $\TSM$ whose maximal cones are indexed by monodromy graphs, which should be considered the ``main component'' of $\TSM$.


\begin{definition}
We denote by $\TMC$  the union of cones in $\TSM$ such that in the associated combinatorial type, all bounded edges have non-zero expansion factor. 

%
\end{definition}

The space $\TMC$ has another important description. Let $\sigma^\sfi$ be the interior of the cone  in $\mathsf{tEx}$ consisting of line graphs with $r$ distinct vertices. Then
\[
\TMC := \overline{br_{\trop}^{-1}(\sigma^\sfi)},
\]
where the closure is taken in $\TSM$. The maximal cones of $\TMC$ all have dimension $2g-3+n$ and are naturally indexed by monodromy graphs of type $(g, \mathbf{x})$.


As discussed in Section \ref{sec:tcpp}, any subdivision $\TMB$  of $\mathcal{M}^{\trop}_{g, n}$  containing $\TMC$ as a subcomplex determines a birational morphism. Precisely, we first pass to the non-proper birational transformation of the Artin fan of $\Mbar_{g,n}$ determined by this subdivision. We then perform the ordinary stack theoretic pullback of the map $\Mbar_{g,n}$ to its Artin fan to obtain:
\begin{equation}
\pi: \MB\to \Mbar_{g,n},
\end{equation}
and piecewise polynomial functions on $\TMB$ give rise to cohomology classes on $\MB$. 


{

\begin{definition}
The \textit{main component} of $\LDR$ is the closure of the locus of consisting of non-constant maps from smooth domains. It is denoted $\TC$.
\end{definition} 

The main component is certainly compact, and from the Riemann--Hurwitz theorem it has dimension equal to its expected dimension $2g-3+n$. Indeed, it coincides with the image of an appropriate space of admissible covers, see for instance~\cite[Section~3 and 4]{Cav16}. However, the locus can be singular. The following lemma asserts that we can ``stay away'' from the singular locus of the closure for the purpose of studying double Hurwitz numbers. 

\begin{lemma}\label{lem: transverse-where-needed}
Let $\sigma$ be a cone of $\TMC$ of dimension $2g-3+n$ and let $V_\sigma$ be the corresponding closed stratum in $\cM_{g}(\mathbf x,0)$. The intersection of $\TC$ with $V_\sigma$ consists of finitely many points.
\end{lemma}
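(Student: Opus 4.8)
The claim is essentially a dimension count combined with properness. The cone $\sigma$ has dimension $2g-3+n$, which equals the dimension of the main component $\TC$, so the stratum $V_\sigma$ has codimension $2g-3+n$ in $\cM_g^\sfbu(\mathbf x, 0)$ while $\TC$ has that same dimension; generically one expects the intersection to be zero-dimensional, hence finite by properness of $\TC$. The work is to rule out positive-dimensional components of the intersection. The plan is to use the branch morphism $br_{\trop}$ (and its geometric avatar $br$) to pin down the combinatorics of any point in $\TC \cap V_\sigma$: a point of $V_\sigma$ has dual graph a specified degeneration, and a point of $\TC$ corresponds to an admissible cover (equivalently a relative stable map from a smooth-source-degenerating family), so a point in the intersection is an admissible cover whose target degeneration and source dual graph are both constrained by $\sigma$.

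\textbf{Key steps.} First, I would recall that $\TC$ is the image of a space of admissible covers (cf.\ \cite{Cav16}), so its points over a boundary stratum are fully described by tropical covers $\Gamma \to T$; a point of $\TC \cap V_\sigma$ corresponds to a cover whose source dual graph refines to the graph underlying $\sigma$ and whose combinatorial type lies in $\TMC$, i.e.\ has all bounded expansion factors nonzero. Second, I would invoke the description $\TMC = \overline{br_{\trop}^{-1}(\sigma^\sfi)}$: since $\sigma$ is a maximal ($(2g-3+n)$-dimensional) cone of $\TMC$, it maps under $br_{\trop}$ onto the top-dimensional cell $\sigma^\sfi$ of $\mathsf{tEx}$, and this map is finite on each such cone (it is the tropical branch morphism, a map of equidimensional cone complexes with finite lattice index local degrees, as recalled in Section~\ref{sec:dhn}). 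Third, I would translate this finiteness back: the monodromy graph indexing $\sigma$ determines the source combinatorial type up to the finitely-many edge-length solutions imposed by requiring the target to be a chain with $r$ marked vertices at prescribed (but now \emph{fixed}, because we are looking at a single stratum $V_\sigma$ rather than a family) positions; equivalently, the point of $V_\sigma$ fixes the target $T$ up to scaling, and then the pre-deformability/balancing conditions of the admissible cover determine the source-curve moduli up to finitely many choices. Fourth, I would conclude: each such admissible cover, being an isolated solution of the combinatorial constraints, contributes a single point (or finitely many, accounting for gluing/automorphism data), and since $\TC$ is proper, $\TC \cap V_\sigma$ is a proper zero-dimensional scheme, hence finite.

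\textbf{Main obstacle.} The delicate point is showing the intersection does not pick up a positive-dimensional component — a priori the singularities of the closure $\TC$ could cause it to meet $V_\sigma$ non-transversally in positive dimension. The cleanest route is to argue on the admissible-cover model (which is smooth, or at worst has quotient singularities) upstairs and push down: on the space of admissible covers, the locus mapping to $V_\sigma$ with the prescribed target degeneration is the preimage of a point under a finite-type branch-type morphism, hence zero-dimensional, and finiteness is preserved under the proper pushforward to $\TC \subseteq \cM_g^\sfbu(\mathbf x,0)$. The bookkeeping of which boundary strata of the admissible-cover space dominate $V_\sigma$, and verifying that the expansion-factor (pre-deformability) constraints together with the fixed target leave only finitely many source configurations, is the technical heart; this is exactly the classical matching between the degeneration formula multiplicity and the tropical local degree recalled in Section~\ref{sec:dhn}, now used in its ``only finitely many contributions'' form rather than its enumerative form.
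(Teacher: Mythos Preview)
Your proposal is essentially correct and, like the paper, ultimately rests on the admissible-cover model. But you take a detour that the paper avoids, and you leave one step implicit that the paper makes explicit.

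The paper's argument is shorter: it first restricts to the locally closed stratum $V_\sigma^\circ$. Any point of $\TC$ lying there is a relative stable map whose combinatorial type is exactly that of $\sigma$; since $\sigma\in\TMC$ all expansion factors are nonzero, so no component of the source is contracted and the map is \emph{finite}. A finite relative stable map to $\mathbb P^1$ is automatically an admissible cover, and so is every infinitesimal deformation of it; hence the moduli space has the expected dimension at such a point. The dimension count then forces $\TC\cap V_\sigma^\circ$ to be zero-dimensional. Finally, because $\sigma$ is maximal in $\TMC$ (there is no cone of $\mathsf{DR}_g^{\trop}(\mathbf x,0)$ containing it as a proper face), no point of $\TC$ has combinatorial type strictly refining $\sigma$, so $\TC\cap V_\sigma = \TC\cap V_\sigma^\circ$ and the lemma follows.

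By contrast, your route through the tropical branch morphism is unnecessary here: once you are inside $V_\sigma^\circ$, the combinatorial type is already pinned down to be $\sigma$ itself, so your ``finitely many combinatorial types'' step is vacuous. All the content sits in what you call the fourth step and the main obstacle, and the cleanest way to dispatch it is exactly the paper's observation that finite relative stable maps are smooth points of expected dimension (via admissible-cover deformation theory), rather than an indirect finiteness argument. You should also make the closed-versus-locally-closed point explicit; you gesture at it via properness, but the actual reason the boundary of $V_\sigma$ contributes nothing is the maximality of $\sigma$, not properness of $\TC$.
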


\begin{proof}
First consider the intersection of $\TC$ with the locally closed stratum $V_\sigma^\circ$. Any point of $\TC$ that lies in this stratum is maximally degenerate, and no component of the curve is contracted. Relative stable maps to $\mathbb P^1$ relative to $0$ and $\infty$ that are also \textit{finite} as morphisms, are always points where the moduli space has expected dimension. Indeed, such maps are automatically admissible covers, and all infinitesimal deformations of them are as well. It follows that the intersection above must be finitely many points. Since no points of $\LDR$ have a combinatorial type whose cone contains $\sigma$ as a face, the intersection with locally closed and closed strata coincide. The lemma is a consequence. 
\end{proof}

\begin{remark}
In case it is useful for future applications, we record that a stronger statement is true than the lemma. Precisely, if $\sigma$ is any cone on $\TMC$, the intersection of the \textit{locally closed} stratum associated to $\sigma$ with $\TC$ is smooth and of the expected dimension. This can be deduced by either comparing with the deformation theory of admissible covers, as in~\cite[Section~4]{CMR14a} or using the explicit deformation sequence for fine and saturated logarithmic maps~\cite[Proposition~4.2]{CFPU}. Note for the stronger conclusion, it is essential to take the finer of the two integral structures described in Remark~\ref{rem: two-lattices}.
\end{remark}

}

\begin{proposition}\label{prop:dhntc}
With notation as developed throughout the section, we have
\begin{equation}
H_{g}(\mathbf{x}) =\deg\left( \TC \cdot \sum_{\sigma_\Gamma} m_\Gamma\ \Delta_{\sigma_\Gamma}\right),
\end{equation}
where the sum ranges over all maximal cones of $\TMC$, and $\sigma_\Gamma$ denotes the cone indexed by the monodromy graph $\Gamma$.
\end{proposition}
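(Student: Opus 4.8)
The plan is to deduce Proposition~\ref{prop:dhntc} from Theorem~\ref{thm:A} by unwinding the definition of the branch polynomial $br_g(\mathbf x, 0)$ in geometric terms, using Lemma~\ref{lem: transverse-where-needed} to guarantee that the relevant intersections are honest finite sets of points and hence that the ``virtual'' and ``actual'' intersection numbers agree. First I would recall that, by definition,
\[
br_g(\mathbf x, 0) = \sum_{\sigma_\Gamma} m_\Gamma \prod_{\rho \subseteq \sigma_\Gamma} \varphi_\rho,
\]
where the sum is over maximal cones of $\TMC$ (equivalently, monodromy graphs). Under the cohomological pullback $\mathsf{PP}^\star(\TMB) \to \mathsf{CH}^\star(\MB)$ from \eqref{eq:cohpb}, each product $\prod_{\rho \subseteq \sigma_\Gamma}\varphi_\rho$ is the class of a piecewise linear function supported on a single simplicial cone $\sigma_\Gamma$; I would argue that this class equals (the operational class of) the closed stratum $\Delta_{\sigma_\Gamma}$, since a product of the elementary piecewise linear generators dual to the rays of a simplicial cone cuts out precisely the corresponding boundary stratum in a toroidal blowup, compatibly with its normal-crossings structure. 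Hence as a cohomology class on $\MB$,
\[
br_g(\mathbf x, 0) = \sum_{\sigma_\Gamma} m_\Gamma\, \Delta_{\sigma_\Gamma}.
\]

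Next I would address the passage from the virtual class $\LDR$ to the main component $\TC$. The maximal cones $\sigma_\Gamma$ of $\TMC$ all have dimension $2g-3+n$, which is the dimension of the main component $\TC$; intersecting with the codimension-$(2g-3+n)$ class $\Delta_{\sigma_\Gamma}$ therefore produces a zero-dimensional cycle. By Lemma~\ref{lem: transverse-where-needed}, the set-theoretic intersection $\TC \cap V_{\sigma_\Gamma}$ is finite, and moreover these points all lie in the locus of finite (admissible cover) maps, where the moduli space is of expected dimension and the virtual class coincides with the ordinary fundamental class of $\TC$. I would also observe that the components of $\LDR$ other than $\TC$ contribute nothing: any point of $\LDR$ lying in $\Delta_{\sigma_\Gamma}$ must have a combinatorial type whose cone contains $\sigma_\Gamma$ as a face, but $\sigma_\Gamma$ is already a \emph{maximal} cone of $\TMC$, so such points lie on $\TMC$ and hence parameterize maps with no contracted components, which are finite and therefore on $\TC$. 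Combining these two points, $\deg(\LDR \cdot \Delta_{\sigma_\Gamma}) = \deg(\TC \cdot \Delta_{\sigma_\Gamma})$.

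Putting the pieces together, Theorem~\ref{thm:A} gives
\[
H_g(\mathbf x) = \deg\bigl(\LDR \cdot br_g(\mathbf x, 0)\bigr) = \sum_{\sigma_\Gamma} m_\Gamma \deg\bigl(\LDR \cdot \Delta_{\sigma_\Gamma}\bigr) = \sum_{\sigma_\Gamma} m_\Gamma \deg\bigl(\TC \cdot \Delta_{\sigma_\Gamma}\bigr),
\]
which is the assertion of the proposition. The main obstacle I expect is the careful justification that the operational class attached to $\prod_{\rho\subseteq\sigma_\Gamma}\varphi_\rho$ really is the fundamental class of the stratum $\Delta_{\sigma_\Gamma}$ (and not some correction supported on deeper strata or twisted by normal-bundle contributions), and relatedly that replacing $\LDR$ by $\TC$ introduces no discrepancy along the non-reduced or singular locus of the closure; this is precisely where one must invoke that the relevant intersection points are admissible covers with unobstructed deformation theory, as in the proof of Lemma~\ref{lem: transverse-where-needed}, and where the choice of the finer integral structure of Remark~\ref{rem: two-lattices} matters.
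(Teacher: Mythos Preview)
Your approach is correct and shares the same key identifications with the paper, but the logical direction is reversed. You deduce the proposition from Theorem~A and then bridge $\LDR$ to $\TC$; the paper instead computes $\deg\bigl(\TC \cdot \sum m_\Gamma \Delta_{\sigma_\Gamma}\bigr)$ directly, without invoking Theorem~A as a black box. Concretely, the paper (i) argues explicitly that each $\sigma_\Gamma$ is simplicial (via an Euler-characteristic count showing the rays are distinct), so that $[\Delta_{\sigma_\Gamma}] = \prod_{\rho\subset\sigma_\Gamma}\varphi_\rho$; (ii) uses Lemma~\ref{lem: transverse-where-needed} to rule out excess intersection; (iii) pulls back along $st$ and invokes the cone-by-cone form of \eqref{eq:brprop} to rewrite $\sum_\Gamma m_\Gamma\, st^\star_{\trop}\varphi_{\sigma_\Gamma} = br^\star_{\trop}\bigl(t_1\prod(t_i-t_{i-1})\bigr)$; (iv) recognizes the last expression as $\deg(br^\star[pt]) = H_g(\mathbf x)$, exactly the final step in the proof of Theorem~A. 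Thus the paper never needs to compare the virtual class $\LDR$ with the actual cycle $\TC$: it lands on the classical identity ``degree of the branch map from the main component equals the Hurwitz number'' directly.

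Your extra step---replacing $\LDR$ by $\TC$---is where your route demands more. Your argument that components of $\mathsf{DR}_g(\mathbf x,0)$ other than $\TC$ miss $\Delta_{\sigma_\Gamma}$ is correct (any dual graph whose cone contains $\sigma_\Gamma$ as a face must have all genus-$0$ vertices, hence no contracted positive-genus pieces), and at the finitely many intersection points the virtual and fundamental classes agree because the maps are unobstructed admissible covers. So the step goes through, but it is genuinely more delicate than the paper's route, which sidesteps the virtual-to-actual comparison entirely. One small omission on your side: you assert but do not justify that $\sigma_\Gamma$ is simplicial; the paper supplies this argument, and it is what makes the identification $[\Delta_{\sigma_\Gamma}] = \prod_\rho \varphi_\rho$ clean.
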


\begin{proof}


\rcolor{Fix a monodromy graph $\Gamma$, and note that the corresponding cone $\sigma_\Gamma$ is simplicial:  consider a tropical cover $f: \Gamma \to T$ of topological type identified by $\Gamma$; denote by $\Gamma_{< i}$ the inverse image $f^{-1}((-\infty, v_i))$ (i.e. the part of $\Gamma$ which maps to the left of the $i$-th vertex) and by $S_{\Gamma_{< i}}$ a smooth topological surface with genus equal to the first Betti number of  $\Gamma_{< i}$, and with as many punctures as the ends of $\Gamma_{< i}$; one can see that since the surfaces associated to two consecutive vertices differ by a pair of pants,  the following relation holds among  Euler characteristics:  $\chi(S_{\Gamma_{< i}})  = \chi(S_{\Gamma_{< i+1}}) +1$.   The rays  of the cone corresponding to the monodromy graph $\Gamma$ are obtained by contracting all compact edges that do not lie above two consecutive vertices in the line graph $T$. No two rays can be the same, as that would require the Euler characteristics of the surfaces associated to parts of $\Gamma$ mapping to the  left of two distinct vertices of $T$ to be equal.}  Using the language from Section \ref{sec:tcpp}, the class of the stratum $\Delta_{\sigma_\Gamma}$ may be described as a piecewise polynomial function on $\TMB$. As in the previous section, we denote by $\varphi_\rho$  the  piecewise linear function whose slope along this ray is $1$, and whose slope along any other ray is $0$. Then we have:
\begin{equation}\label{eq:bpolym}
[\Delta_{\sigma_\Gamma}] = \prod_{\rho \in {\sigma_\Gamma}} \varphi_\rho =\ : \varphi_{\sigma_\Gamma},
\end{equation}
where the product runs over all the rays of the cone ${\sigma_\Gamma}$. Since the cone $\sigma_\Gamma$ is simplicial, the stratum is indeed obtained as the intersection of the divisors corresponding to the rays of the cone $\sigma_\Gamma$. By Lemma~\ref{lem: transverse-where-needed} above, the intersection of all these divisors and $\TC$ is dimensionally proper, and there is no excess intersection calculation to perform. 

We pause to clarify that, as is common in the literature about the tautological ring, there is potential ambiguity about what one means by ``class of a stratum". It could be interpreted as the class of a locus as opposed to the pushforward of the fundamental class via the appropriate gluing morphism. We adopt the latter perspective, which is consistent both with the most recent literature, and the conventions of the software package admcycles.  We also take this opportunity to note that in the first ar$\chi$iv version of the paper, the multiplicity $m_\Gamma$ was erroneously dropped from the formula. We thank J. Schmitt for pointing out the correction, which arose from cross-checking the formula with the one in~\cite{HMPPS} using the admcycles package.

Let us now return to the proof. By applying the projection formula, we have 
\begin{equation}\label{eq:strint}\deg(\TC \cdot \Delta_{\sigma_\Gamma}) = \deg(st^\star(\varphi_{\sigma_\Gamma})). \end{equation} Since the assignment of a cohomology class to a piecewise polynomial function is functorial, we have
\begin{equation} \label{funct2}
 st^\star(\varphi_{\sigma_\Gamma}) = st_{\trop}^\star(\varphi_{\sigma_\Gamma}),
\end{equation}
where in the right hand side of \eqref{funct}, $\varphi_{\sigma_\Gamma}$ is regarded as a piecewise polynomial function, whereas in the left hand side as a cohomology class on $\MB$. 
The multiplicities $m_\Gamma$ are defined in order for the following equality to hold:
\begin{equation} \label{boh}
st_{\trop}^\star(m_\Gamma \ \varphi_{\sigma_\Gamma})_{|{\sigma_\Gamma}} = br_{\trop}^\star\left(t_1\prod_{i=2}^{r-1} (t_i-t_{i-1})\right)_{|{\sigma_\Gamma}};
\end{equation}
for $\tilde{\Gamma}\not= \Gamma$, we have
\begin{equation} \label{boh1}
st_{\trop}^\star(\varphi_{\sigma_\Gamma})_{|{\sigma_{\tilde{\Gamma}}}} = 0.
\end{equation}

Summing over all monodromy graphs, one obtains
\begin{eqnarray} \label{boh2}
\deg\left( \TC \cdot \sum_{\sigma_\Gamma} m_\Gamma\ \Delta_{\sigma_\Gamma}\right) &=& \deg\left(st_{\trop}^\star \left( \sum_\Gamma m_\Gamma\cdot \varphi_{\sigma_\Gamma}\right)\right)\\ &=& \deg\left( br_{\trop}^\star\left( t_1\prod_{i=2}^{r-1} (t_i-t_{i-1})\right)\right), 
\end{eqnarray}
 which reproduces the computation in the proof of Theorem \ref{thm:A}.
\end{proof}}

\begin{example}
We consider the double Hurwitz number $H_1((3,-3)) = 2$ and compute it as in Proposition \ref{prop:dhntc}.
The moduli space of tropical rubber stable maps $\mathsf{DR}^{\trop}_1((3,-3),0)$ , illustrated in red in Figure \ref{fig:+}, is not equidimensional: it is the union of a two dimensional cone parameterizing tropical maps with a contracting tropical elliptic curve, with a one dimensional cone, whose general element parameterizes the covers with two compact edges of length $l_1$ and $l_2$, as drawn on the leftmost side of Figure \ref{fig:+}. The tropical branch morphism contracts the two dimensional cone of  $\mathsf{DR}^{\trop}_1((3,-3),0)$ and maps the one dimensional cone onto the unique ray of  $\mathsf{tEx}$. 
The image of the one dimensional cone of 
$\mathsf{DR}^{\trop}_1((3,-3),0)$ via the tropical stabilization morphism is the slope $-1/2$ ray $\rho$ in $\mathcal{M}^{\trop}_{1,2}$. This ray alone gives a subdivision $\mathsf{DR}^{\trop}_1((3,-3),0)$ of $\mathcal{M}^{\trop}_{1,2}$, which imposes a  (weighted) toroidal blowup in the algebraic moduli space $\overline{\mathcal{M}}_{1,2}$, as depicted in Figure \ref{fig:tblow}.  
\begin{figure}

\begin{tikzpicture}[x=0.75pt,y=0.75pt,yscale=-1,xscale=1]

\draw [color={rgb, 255:red, 208; green, 2; blue, 27 }  ,draw opacity=1 ][line width=1.5]    (187.33,121.33) -- (320.33,120.33) ;
\draw [line width=2.25]    (185.67,179.67) -- (317.67,180.33) ;
\draw  [draw opacity=0][line width=1.5]  (54.95,34.55) .. controls (55.06,34.26) and (55.18,33.97) .. (55.3,33.69) .. controls (61.73,18.42) and (79.32,11.25) .. (94.59,17.68) .. controls (109.2,23.84) and (116.39,40.22) .. (111.35,55) -- (82.95,45.33) -- cycle ; \draw  [line width=1.5]  (54.95,34.55) .. controls (55.06,34.26) and (55.18,33.97) .. (55.3,33.69) .. controls (61.73,18.42) and (79.32,11.25) .. (94.59,17.68) .. controls (109.2,23.84) and (116.39,40.22) .. (111.35,55) ;
\draw  [draw opacity=0][line width=1.5]  (111.4,55.28) .. controls (102.29,60.93) and (89.43,61.94) .. (77.27,56.91) .. controls (66.38,52.42) and (58.5,44.1) .. (55.2,34.92) -- (88.59,29.49) -- cycle ; \draw  [line width=1.5]  (111.4,55.28) .. controls (102.29,60.93) and (89.43,61.94) .. (77.27,56.91) .. controls (66.38,52.42) and (58.5,44.1) .. (55.2,34.92) ;
\draw [line width=1.5]    (111.4,55.28) -- (141.65,68.25) ;
\draw [line width=1.5]    (55.2,34.26) -- (29.15,23.82) ;
\draw [line width=1.5]    (8.33,82) -- (108.67,126.67) ;
\draw [fill={rgb, 255:red, 248; green, 175; blue, 184 }  ,fill opacity=1 ]   (132.42,139.68) -- (177.2,160.27) ;
\draw [shift={(179.01,161.11)}, rotate = 204.7] [color={rgb, 255:red, 0; green, 0; blue, 0 }  ][line width=0.75]    (10.93,-3.29) .. controls (6.95,-1.4) and (3.31,-0.3) .. (0,0) .. controls (3.31,0.3) and (6.95,1.4) .. (10.93,3.29)   ;
\draw    (71.67,69) -- (61.19,91.19) ;
\draw [shift={(60.33,93)}, rotate = 295.28] [color={rgb, 255:red, 0; green, 0; blue, 0 }  ][line width=0.75]    (10.93,-3.29) .. controls (6.95,-1.4) and (3.31,-0.3) .. (0,0) .. controls (3.31,0.3) and (6.95,1.4) .. (10.93,3.29)   ;
\draw [line width=1.5]    (379.33,19.67) -- (380.33,120.33) ;
\draw [line width=1.5]    (517.67,119.67) -- (380.33,120.33) ;
\draw [line width=1.5]  [dash pattern={on 5.63pt off 4.5pt}]  (380.33,120.33) -- (517.67,257.67) ;
\draw    (249.67,133.67) -- (249.67,167) ;
\draw [shift={(249.67,169)}, rotate = 270] [color={rgb, 255:red, 0; green, 0; blue, 0 }  ][line width=0.75]    (10.93,-3.29) .. controls (6.95,-1.4) and (3.31,-0.3) .. (0,0) .. controls (3.31,0.3) and (6.95,1.4) .. (10.93,3.29)   ;
\draw    (333.33,120.67) -- (364.33,120.98) ;
\draw [shift={(366.33,121)}, rotate = 180.58] [color={rgb, 255:red, 0; green, 0; blue, 0 }  ][line width=0.75]    (10.93,-3.29) .. controls (6.95,-1.4) and (3.31,-0.3) .. (0,0) .. controls (3.31,0.3) and (6.95,1.4) .. (10.93,3.29)   ;
\draw [color={rgb, 255:red, 208; green, 2; blue, 27 }  ,draw opacity=1 ][line width=1.5]    (380.33,120.33) -- (519.67,187) ;
\draw  [color={rgb, 255:red, 208; green, 2; blue, 27 }  ,draw opacity=1 ][fill={rgb, 255:red, 208; green, 2; blue, 27 }  ,fill opacity=1 ] (181.17,121.33) .. controls (181.17,119.63) and (182.55,118.25) .. (184.25,118.25) .. controls (185.95,118.25) and (187.33,119.63) .. (187.33,121.33) .. controls (187.33,123.04) and (185.95,124.42) .. (184.25,124.42) .. controls (182.55,124.42) and (181.17,123.04) .. (181.17,121.33) -- cycle ;
\draw  [fill={rgb, 255:red, 0; green, 0; blue, 0 }  ,fill opacity=1 ] (182.58,179.67) .. controls (182.58,177.96) and (183.96,176.58) .. (185.67,176.58) .. controls (187.37,176.58) and (188.75,177.96) .. (188.75,179.67) .. controls (188.75,181.37) and (187.37,182.75) .. (185.67,182.75) .. controls (183.96,182.75) and (182.58,181.37) .. (182.58,179.67) -- cycle ;
\draw  [fill={rgb, 255:red, 0; green, 0; blue, 0 }  ,fill opacity=1 ] (258.83,119.83) .. controls (258.83,118.13) and (260.21,116.75) .. (261.92,116.75) .. controls (263.62,116.75) and (265,118.13) .. (265,119.83) .. controls (265,121.54) and (263.62,122.92) .. (261.92,122.92) .. controls (260.21,122.92) and (258.83,121.54) .. (258.83,119.83) -- cycle ;
\draw  [fill={rgb, 255:red, 0; green, 0; blue, 0 }  ,fill opacity=1 ] (257.67,180) .. controls (257.67,178.3) and (259.05,176.92) .. (260.75,176.92) .. controls (262.45,176.92) and (263.83,178.3) .. (263.83,180) .. controls (263.83,181.7) and (262.45,183.08) .. (260.75,183.08) .. controls (259.05,183.08) and (257.67,181.7) .. (257.67,180) -- cycle ;
\draw  [dash pattern={on 0.84pt off 2.51pt}]  (55.2,34.92) -- (29,92) ;
\draw  [dash pattern={on 0.84pt off 2.51pt}]  (111.35,55) -- (84,114) ;
\draw  [color={rgb, 255:red, 255; green, 255; blue, 255 }  ,draw opacity=1 ][fill={rgb, 255:red, 248; green, 175; blue, 184 }  ,fill opacity=1 ] (199.18,19.7) -- (184.25,121.33) -- (140.22,26.1) -- cycle ;
\draw    (156.01,73.11) -- (240.17,108.34) ;
\draw [shift={(242.01,109.11)}, rotate = 202.71] [color={rgb, 255:red, 0; green, 0; blue, 0 }  ][line width=0.75]    (10.93,-3.29) .. controls (6.95,-1.4) and (3.31,-0.3) .. (0,0) .. controls (3.31,0.3) and (6.95,1.4) .. (10.93,3.29)   ;
\draw [color={rgb, 255:red, 208; green, 2; blue, 27 }  ,draw opacity=1 ][line width=1.5]    (140.74,22.51) -- (184.25,121.33) ;
\draw [color={rgb, 255:red, 208; green, 2; blue, 27 }  ,draw opacity=1 ][line width=1.5]    (199.18,19.7) -- (184.25,118.25) ;
\draw  [color={rgb, 255:red, 208; green, 2; blue, 27 }  ,draw opacity=1 ][fill={rgb, 255:red, 208; green, 2; blue, 27 }  ,fill opacity=1 ] (181.25,121.42) .. controls (181.25,119.71) and (182.63,118.33) .. (184.33,118.33) .. controls (186.04,118.33) and (187.42,119.71) .. (187.42,121.42) .. controls (187.42,123.12) and (186.04,124.5) .. (184.33,124.5) .. controls (182.63,124.5) and (181.25,123.12) .. (181.25,121.42) -- cycle ;

\draw (258,141.4) node [anchor=north west][inner sep=0.75pt]    {$br_{trop}$};
\draw (332,95.4) node [anchor=north west][inner sep=0.75pt]    {$st_{trop}$};
\draw (80,76.4) node [anchor=north west][inner sep=0.75pt]    {$f$};
\draw (83,-1.6) node [anchor=north west][inner sep=0.75pt]    {$1$};
\draw (80,40.4) node [anchor=north west][inner sep=0.75pt]    {$2$};
\draw (18,95.4) node [anchor=north west][inner sep=0.75pt]    {$0$};
\draw (71,119.4) node [anchor=north west][inner sep=0.75pt]    {$t_{1}$};
\draw (-17,67.4) node [anchor=north west][inner sep=0.75pt]    {$\mathbb{R}$};
\draw (3,6.4) node [anchor=north west][inner sep=0.75pt]    {$\Gamma$};
\draw (200,51.4) node [anchor=north west][inner sep=0.75pt]    {$\textcolor[rgb]{0.82,0.01,0.11}{\mathsf{DR}^\trop_1((3,-3),0)}$};
\draw (200,198.4) node [anchor=north west][inner sep=0.75pt]    {$\mathsf{tEx}$};
\draw (32,7.4) node [anchor=north west][inner sep=0.75pt]    {$3$};
\draw (126,45.4) node [anchor=north west][inner sep=0.75pt]    {$3$};
\draw (319.67,183.73) node [anchor=north west][inner sep=0.75pt]    {$t_{1}$};
\draw (294,128.4) node [anchor=north west][inner sep=0.75pt]    {$\textcolor[rgb]{0.82,0.01,0.11}{l}\textcolor[rgb]{0.82,0.01,0.11}{_{1}}\textcolor[rgb]{0.82,0.01,0.11}{=2l}\textcolor[rgb]{0.82,0.01,0.11}{_{2}}$};
\draw (529,183.4) node [anchor=north west][inner sep=0.75pt]    {$\textcolor[rgb]{0.82,0.01,0.11}{\mathsf{DR}^{\sfi,\trop}_{1}((3,-3),0) =\rho}$};

\end{tikzpicture}


\caption{The subdivision of the moduli space $\mathcal{M}^{\trop}_{1,2}$ induced by the moduli space of tropical rubber maps in genus $1$ with contact $(3,-3)$.}\label{fig-subd}
\label{fig:+}
\end{figure}
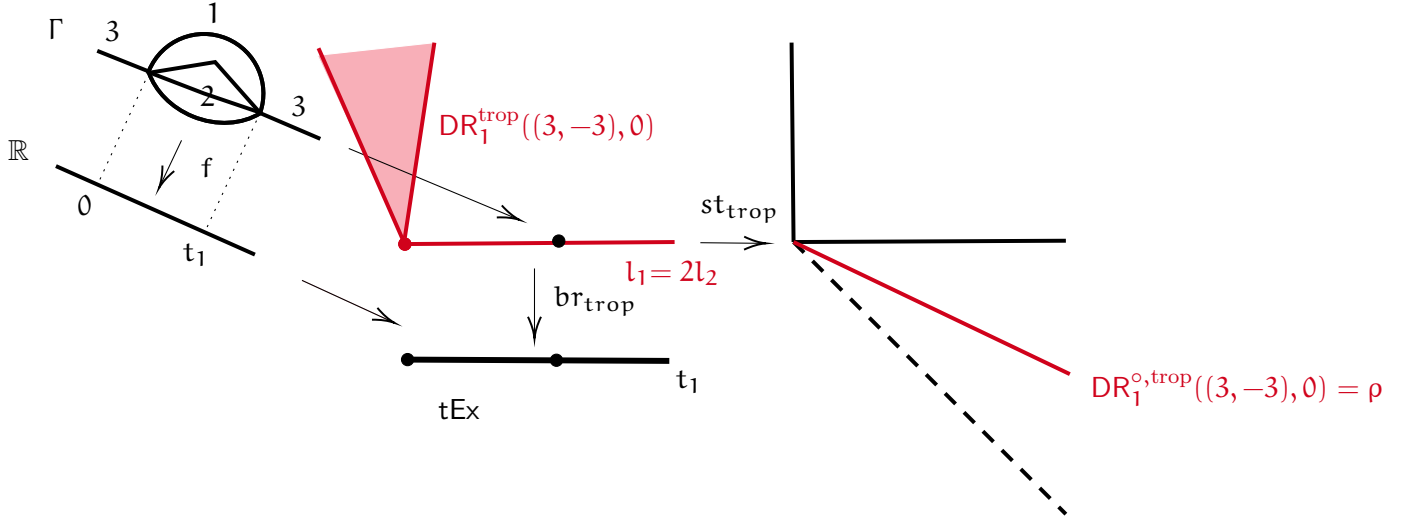
The closure of  $\mathsf{DR}^{\sfi}_1(3,-3)$ is not dimensionally transverse to the boundary of $\overline{\mathcal{M}}_{1,2}$, but its proper transform $\mathsf{DR}^{\sfi, \lightening}_1(3,-3)$ is dimensionally transverse to the exceptional divisor $E = D_\rho$. One may compute the intersection multiplicity $|D_\rho \cdot \mathsf{DR}^{\sfi,\lightening}_1((3,-3),0)|$ via a direct local coordinate computation as in \cite{CMR14a}. We bypass this technical computation by observing that the piecewise linear function  $\varphi_\rho$ associated to the exceptional divisor $D_\rho$ is such that:
\begin{equation}\label{eq:stbr2}
 st_\trop^\star(2\varphi_\rho) =  2l_2 = br_\trop^\star(t_1). 
\end{equation}
Note that the multiplicity $m_\Gamma$ prescribed in the definition of branch polynomial for this monodromy graph is $2$. The multiplicity of $br_\trop^\star(t_1)$ is  computed as in \cite{CJM1}, where it is shown to equal the product of the weights of the compact edges of $\Gamma$.
 
Alternatively, one may observe that the piecewise linear function $t_1$ determines the class of a point on the Losev--Manin space $[LM(2)/S_2]$,  and the assignment of a cohomology class on the algebraic moduli space to a piecewise polynomial on the tropical moduli space is functorial. It follows from \eqref{eq:stbr} that the degree of the class associated to the piecewise polynomial function $st_\trop^\star(\varphi_\rho)$ equals the  degree of $br^\star[pt]$, which is by definition the Hurwitz number $H_1(3,-3)$.

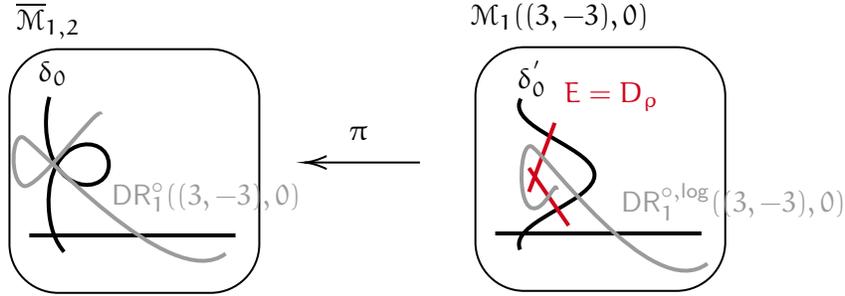
\begin{figure}

\tikzset{every picture/.style={line width=0.75pt}} 

\begin{tikzpicture}[x=0.75pt,y=0.75pt,yscale=-1,xscale=1]

\draw   (100,68.6) .. controls (100,55.01) and (111.01,44) .. (124.6,44) -- (201.4,44) .. controls (214.99,44) and (226,55.01) .. (226,68.6) -- (226,142.4) .. controls (226,155.99) and (214.99,167) .. (201.4,167) -- (124.6,167) .. controls (111.01,167) and (100,155.99) .. (100,142.4) -- cycle ;
\draw [line width=1.5]    (110,138) -- (214,138) ;
\draw  [line width=1.5]  (127.54,146.28) .. controls (123.32,141.57) and (120.13,134) .. (119.76,122.92) .. controls (118.58,86.94) and (149.58,85.92) .. (150.11,101.91) .. controls (150.63,117.9) and (119.63,118.92) .. (118.45,82.94) .. controls (118.26,77.11) and (118.91,72.2) .. (120.14,68.13) ;
\draw  [line width=1.5]  (357.9,69.01) .. controls (357.29,70.15) and (356.96,71.31) .. (356.96,72.51) .. controls (356.96,78.84) and (366.25,84.3) .. (376.01,90.01) .. controls (385.77,95.71) and (395.06,101.17) .. (395.06,107.51) .. controls (395.06,113.84) and (385.77,119.3) .. (376.01,125.01) .. controls (366.25,130.71) and (356.96,136.17) .. (356.96,142.51) .. controls (356.96,143.42) and (357.16,144.32) .. (357.52,145.2) ;
\draw   (335,67.6) .. controls (335,54.01) and (346.01,43) .. (359.6,43) -- (436.4,43) .. controls (449.99,43) and (461,54.01) .. (461,67.6) -- (461,141.4) .. controls (461,154.99) and (449.99,166) .. (436.4,166) -- (359.6,166) .. controls (346.01,166) and (335,154.99) .. (335,141.4) -- cycle ;
\draw [line width=1.5]    (345,137) -- (449,137) ;
\draw [color={rgb, 255:red, 208; green, 2; blue, 27 }  ,draw opacity=1 ][line width=1.5]    (375.01,81.2) -- (362,116) ;
\draw    (307.01,101.2) -- (251.01,101.2) ;
\draw [shift={(249.01,101.2)}, rotate = 360] [color={rgb, 255:red, 0; green, 0; blue, 0 }  ][line width=0.75]    (10.93,-3.29) .. controls (6.95,-1.4) and (3.31,-0.3) .. (0,0) .. controls (3.31,0.3) and (6.95,1.4) .. (10.93,3.29)   ;
\draw [color={rgb, 255:red, 155; green, 155; blue, 155 }  ,draw opacity=1 ][line width=1.5]    (146,77) .. controls (145.39,70.91) and (114,116) .. (107,113) .. controls (100,110) and (101,87) .. (106.66,88.43) .. controls (112.32,89.86) and (180.69,168.44) .. (209.01,147.2) ;
\draw [color={rgb, 255:red, 208; green, 2; blue, 27 }  ,draw opacity=1 ][line width=1.5]    (362,104) -- (382,133) ;
\draw [color={rgb, 255:red, 155; green, 155; blue, 155 }  ,draw opacity=1 ][line width=1.5]    (376,115) .. controls (375.39,108.91) and (370,127) .. (363,124) .. controls (356,121) and (357.34,91.57) .. (363,93) .. controls (368.66,94.43) and (423.69,173.44) .. (452.01,152.2) ;
\draw (270,81.4) node [anchor=north west][inner sep=0.75pt]    {$\pi $};
\draw (102,19.4) node [anchor=north west][inner sep=0.75pt]    {$\overline{\mathcal{M}}_{1,2}$};
\draw (331,19.4) node [anchor=north west][inner sep=0.75pt]    {${\mathcal{M}}_{1}((3,-3),0)$};
\draw (122,148) node [anchor=north west][inner sep=0.75pt]   [align=left] {};
\draw (378,59.4) node [anchor=north west][inner sep=0.75pt]    {$\textcolor[rgb]{0.82,0.01,0.11}{E= D_\rho}$};
\draw (150,110) node [anchor=north west][inner sep=0.75pt]    {$\textcolor[rgb]{0.61,0.61,0.61}{\mathsf{DR}_1^{\sfi} ((3,-3),0)}$};
\draw (407,110) node [anchor=north west][inner sep=0.75pt]    {$\textcolor[rgb]{0.61,0.61,0.61}{\mathsf{DR}_1^{\sfi,\lightening} ((3,-3),0)}$};
\draw (113,48.4) node [anchor=north west][inner sep=0.75pt]    {$\delta _{0}$};
\draw (348,48.4) node [anchor=north west][inner sep=0.75pt]    {$ \begin{array}{l}
\delta _{0}^{'}\\
\end{array}$};
\end{tikzpicture}
\caption{The birational transformation of $\overline{\mathcal{M}}_{1,2}$ for the $(3,-3)$ double ramifucation problem in genus $1$ and the dimensionally transverse cycle $\TC$. \rcolor{We denote by $\mathsf{DR}_1^{\sfi} ((3,-3),0)$ the closure of the component of the space of relative stable maps where for the general point the source curve is smooth.}}
\label{fig:tblow}
\end{figure}
\end{example}


\subsection{Lower genus double Hurwitz numbers and $\LDR$.}\label{sec:lg}

The double Hurwitz number $H_g(\mathbf{x})$ is the degree of the class $br^\star([pt])$ in the moduli space of rubber relative stable maps $\RSM$. For a general choice of cycle representing the class of a point, the cycle $br^\star([pt])$ is supported on the main component of the moduli spaces of relative stable maps, the closure of the locus parameterizing maps from smooth source curves. In Proposition \ref{prop:dhntc}  the sub-cone complex of the moduli space of tropical relative stable maps generically parameterizing covers with no contracting subgraph is used to obtain a piecewise polynomial function and subsequently a cohomology class on $\MB$ whose intersection with $\LDR$ has degree $H_g(\mathbf{x})$. In this section, different choices of subcomplexes of the space of tropical relative stable maps will yield cohomology classes on $\MB$ whose intersection with $\LDR$ allows to extract the double Hurwitz numbers $H_h(\mathbf{x})$, for $h<g$.
 
\begin{definition}
For $n\geq 1$, let $T_n$ denote a graph obtained from a rooted, trivalent tree with $n+1$ leaves by  attaching vertices of genus $1$ at every leaf except the root, see the red part of  Figure \ref{fig:lowerg}. While it does not matter what trivalent tree one considers, the graph $T_n$ is to be considered fixed.
\end{definition}

For every monodromy graph of type $(h, \mathbf{x})$, attach a copy of $T_{g-h}$ on the end labeled by $x_1$ \rcolor{and give all edges of $T_{g-h}$ expansion factor equal to $0$}. The graphs so obtained index a cone sub-complex $M^{\trop}_{h<g}(\mathbf{x})$ of the moduli space $\TSM$, which we use to define a cohomology class on $\MB$ extracting the double Hurwitz number $H_{h}(\mathbf{x})$.

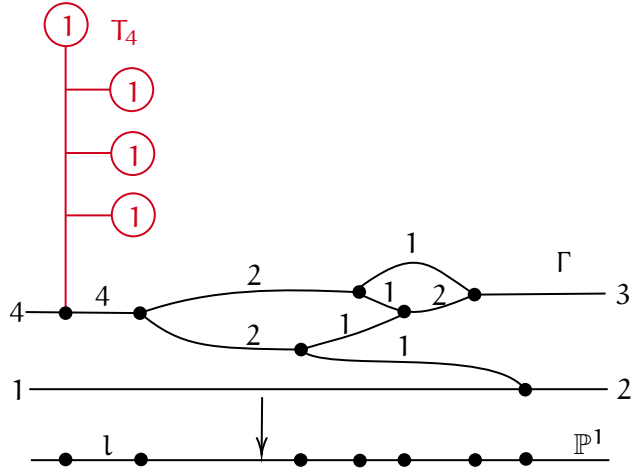
\begin{figure}

\tikzset{every picture/.style={line width=0.75pt}} 

\begin{tikzpicture}[x=0.75pt,y=0.75pt,yscale=-.8,xscale=.8]

\draw  [color={rgb, 255:red, 208; green, 2; blue, 27 }  ,draw opacity=1 ] (68.89,60.22) .. controls (68.89,52.74) and (74.96,46.67) .. (82.44,46.67) .. controls (89.93,46.67) and (96,52.74) .. (96,60.22) .. controls (96,67.71) and (89.93,73.78) .. (82.44,73.78) .. controls (74.96,73.78) and (68.89,67.71) .. (68.89,60.22) -- cycle ;
\draw  [color={rgb, 255:red, 208; green, 2; blue, 27 }  ,draw opacity=1 ] (69.56,100.56) .. controls (69.56,93.07) and (75.62,87) .. (83.11,87) .. controls (90.6,87) and (96.67,93.07) .. (96.67,100.56) .. controls (96.67,108.04) and (90.6,114.11) .. (83.11,114.11) .. controls (75.62,114.11) and (69.56,108.04) .. (69.56,100.56) -- cycle ;
\draw  [color={rgb, 255:red, 208; green, 2; blue, 27 }  ,draw opacity=1 ] (69.89,139.22) .. controls (69.89,131.74) and (75.96,125.67) .. (83.44,125.67) .. controls (90.93,125.67) and (97,131.74) .. (97,139.22) .. controls (97,146.71) and (90.93,152.78) .. (83.44,152.78) .. controls (75.96,152.78) and (69.89,146.71) .. (69.89,139.22) -- cycle ;
\draw  [color={rgb, 255:red, 208; green, 2; blue, 27 }  ,draw opacity=1 ] (27.56,19.22) .. controls (27.56,11.74) and (33.62,5.67) .. (41.11,5.67) .. controls (48.6,5.67) and (54.67,11.74) .. (54.67,19.22) .. controls (54.67,26.71) and (48.6,32.78) .. (41.11,32.78) .. controls (33.62,32.78) and (27.56,26.71) .. (27.56,19.22) -- cycle ;
\draw    (87.67,201.22) .. controls (132.89,185.22) and (176.89,185.89) .. (222.22,187.89) ;
\draw    (87.67,201.22) .. controls (113.89,222.89) and (129.89,223.56) .. (189.22,224.22) ;
\draw    (15.56,200.56) -- (87.67,201.22) ;
\draw    (222.22,187.89) .. controls (262.22,157.89) and (267.89,168.89) .. (298.56,189.56) ;
\draw    (222.22,187.89) .. controls (251.89,198.89) and (253.89,208.89) .. (298.56,189.56) ;
\draw    (189.22,224.22) .. controls (222.56,214.22) and (218.56,217.56) .. (257.89,200.22) ;
\draw    (189.22,224.22) .. controls (201.22,239.56) and (310.56,221.56) .. (330.56,249.56) ;
\draw    (381.22,188.89) -- (298.56,189.56) ;
\draw    (17.89,249.22) -- (382.56,249.22) ;
\draw    (18.89,294) -- (383.56,294) ;
\draw    (164.67,254.22) -- (164.35,288.56) ;
\draw [shift={(164.33,290.56)}, rotate = 270.53] [color={rgb, 255:red, 0; green, 0; blue, 0 }  ][line width=0.75]    (10.93,-3.29) .. controls (6.95,-1.4) and (3.31,-0.3) .. (0,0) .. controls (3.31,0.3) and (6.95,1.4) .. (10.93,3.29)   ;
\draw  [fill={rgb, 255:red, 0; green, 0; blue, 0 }  ,fill opacity=1 ] (84,201.22) .. controls (84,199.2) and (85.64,197.56) .. (87.67,197.56) .. controls (89.69,197.56) and (91.33,199.2) .. (91.33,201.22) .. controls (91.33,203.25) and (89.69,204.89) .. (87.67,204.89) .. controls (85.64,204.89) and (84,203.25) .. (84,201.22) -- cycle ;
\draw  [fill={rgb, 255:red, 0; green, 0; blue, 0 }  ,fill opacity=1 ] (185.56,224.22) .. controls (185.56,222.2) and (187.2,220.56) .. (189.22,220.56) .. controls (191.25,220.56) and (192.89,222.2) .. (192.89,224.22) .. controls (192.89,226.25) and (191.25,227.89) .. (189.22,227.89) .. controls (187.2,227.89) and (185.56,226.25) .. (185.56,224.22) -- cycle ;
\draw  [fill={rgb, 255:red, 0; green, 0; blue, 0 }  ,fill opacity=1 ] (222.22,187.89) .. controls (222.22,185.86) and (223.86,184.22) .. (225.89,184.22) .. controls (227.91,184.22) and (229.56,185.86) .. (229.56,187.89) .. controls (229.56,189.91) and (227.91,191.56) .. (225.89,191.56) .. controls (223.86,191.56) and (222.22,189.91) .. (222.22,187.89) -- cycle ;
\draw  [fill={rgb, 255:red, 0; green, 0; blue, 0 }  ,fill opacity=1 ] (250.56,200.22) .. controls (250.56,198.2) and (252.2,196.56) .. (254.22,196.56) .. controls (256.25,196.56) and (257.89,198.2) .. (257.89,200.22) .. controls (257.89,202.25) and (256.25,203.89) .. (254.22,203.89) .. controls (252.2,203.89) and (250.56,202.25) .. (250.56,200.22) -- cycle ;
\draw  [fill={rgb, 255:red, 0; green, 0; blue, 0 }  ,fill opacity=1 ] (294.89,189.56) .. controls (294.89,187.53) and (296.53,185.89) .. (298.56,185.89) .. controls (300.58,185.89) and (302.22,187.53) .. (302.22,189.56) .. controls (302.22,191.58) and (300.58,193.22) .. (298.56,193.22) .. controls (296.53,193.22) and (294.89,191.58) .. (294.89,189.56) -- cycle ;
\draw  [fill={rgb, 255:red, 0; green, 0; blue, 0 }  ,fill opacity=1 ] (326.89,249.56) .. controls (326.89,247.53) and (328.53,245.89) .. (330.56,245.89) .. controls (332.58,245.89) and (334.22,247.53) .. (334.22,249.56) .. controls (334.22,251.58) and (332.58,253.22) .. (330.56,253.22) .. controls (328.53,253.22) and (326.89,251.58) .. (326.89,249.56) -- cycle ;
\draw  [fill={rgb, 255:red, 0; green, 0; blue, 0 }  ,fill opacity=1 ] (84.67,293.56) .. controls (84.67,291.53) and (86.31,289.89) .. (88.33,289.89) .. controls (90.36,289.89) and (92,291.53) .. (92,293.56) .. controls (92,295.58) and (90.36,297.22) .. (88.33,297.22) .. controls (86.31,297.22) and (84.67,295.58) .. (84.67,293.56) -- cycle ;
\draw  [fill={rgb, 255:red, 0; green, 0; blue, 0 }  ,fill opacity=1 ] (185.33,293.89) .. controls (185.33,291.86) and (186.97,290.22) .. (189,290.22) .. controls (191.03,290.22) and (192.67,291.86) .. (192.67,293.89) .. controls (192.67,295.91) and (191.03,297.56) .. (189,297.56) .. controls (186.97,297.56) and (185.33,295.91) .. (185.33,293.89) -- cycle ;
\draw  [fill={rgb, 255:red, 0; green, 0; blue, 0 }  ,fill opacity=1 ] (222.67,294.22) .. controls (222.67,292.2) and (224.31,290.56) .. (226.33,290.56) .. controls (228.36,290.56) and (230,292.2) .. (230,294.22) .. controls (230,296.25) and (228.36,297.89) .. (226.33,297.89) .. controls (224.31,297.89) and (222.67,296.25) .. (222.67,294.22) -- cycle ;
\draw  [fill={rgb, 255:red, 0; green, 0; blue, 0 }  ,fill opacity=1 ] (250.67,293.89) .. controls (250.67,291.86) and (252.31,290.22) .. (254.33,290.22) .. controls (256.36,290.22) and (258,291.86) .. (258,293.89) .. controls (258,295.91) and (256.36,297.56) .. (254.33,297.56) .. controls (252.31,297.56) and (250.67,295.91) .. (250.67,293.89) -- cycle ;
\draw  [fill={rgb, 255:red, 0; green, 0; blue, 0 }  ,fill opacity=1 ] (295.33,293.89) .. controls (295.33,291.86) and (296.97,290.22) .. (299,290.22) .. controls (301.03,290.22) and (302.67,291.86) .. (302.67,293.89) .. controls (302.67,295.91) and (301.03,297.56) .. (299,297.56) .. controls (296.97,297.56) and (295.33,295.91) .. (295.33,293.89) -- cycle ;
\draw  [fill={rgb, 255:red, 0; green, 0; blue, 0 }  ,fill opacity=1 ] (327.33,293.22) .. controls (327.33,291.2) and (328.97,289.56) .. (331,289.56) .. controls (333.03,289.56) and (334.67,291.2) .. (334.67,293.22) .. controls (334.67,295.25) and (333.03,296.89) .. (331,296.89) .. controls (328.97,296.89) and (327.33,295.25) .. (327.33,293.22) -- cycle ;
\draw [color={rgb, 255:red, 208; green, 2; blue, 27 }  ,draw opacity=1 ]   (40.78,32.56) -- (40.78,201.22) ;
\draw [color={rgb, 255:red, 208; green, 2; blue, 27 }  ,draw opacity=1 ]   (40.44,60.22) -- (69.11,60.22) ;
\draw [color={rgb, 255:red, 208; green, 2; blue, 27 }  ,draw opacity=1 ]   (41.11,100.22) -- (69.78,100.22) ;
\draw [color={rgb, 255:red, 208; green, 2; blue, 27 }  ,draw opacity=1 ]   (41.11,139.56) -- (69.78,139.56) ;
\draw  [fill={rgb, 255:red, 0; green, 0; blue, 0 }  ,fill opacity=1 ] (37.11,201.22) .. controls (37.11,199.2) and (38.75,197.56) .. (40.78,197.56) .. controls (42.8,197.56) and (44.44,199.2) .. (44.44,201.22) .. controls (44.44,203.25) and (42.8,204.89) .. (40.78,204.89) .. controls (38.75,204.89) and (37.11,203.25) .. (37.11,201.22) -- cycle ;
\draw  [fill={rgb, 255:red, 0; green, 0; blue, 0 }  ,fill opacity=1 ] (37.11,293.56) .. controls (37.11,291.53) and (38.75,289.89) .. (40.78,289.89) .. controls (42.8,289.89) and (44.44,291.53) .. (44.44,293.56) .. controls (44.44,295.58) and (42.8,297.22) .. (40.78,297.22) .. controls (38.75,297.22) and (37.11,295.58) .. (37.11,293.56) -- cycle ;

\draw (77.33,53.4) node [anchor=north west][inner sep=0.75pt]  [color={rgb, 255:red, 208; green, 2; blue, 27 }  ,opacity=1 ]  {$1$};
\draw (78,93.73) node [anchor=north west][inner sep=0.75pt]  [color={rgb, 255:red, 208; green, 2; blue, 27 }  ,opacity=1 ]  {$1$};
\draw (78.33,132.4) node [anchor=north west][inner sep=0.75pt]  [color={rgb, 255:red, 208; green, 2; blue, 27 }  ,opacity=1 ]  {$1$};
\draw (36,11.73) node [anchor=north west][inner sep=0.75pt]  [color={rgb, 255:red, 208; green, 2; blue, 27 }  ,opacity=1 ]  {$1$};
\draw (67.33,14.4) node [anchor=north west][inner sep=0.75pt]  [color={rgb, 255:red, 208; green, 2; blue, 27 }  ,opacity=1 ]  {$T_{4}$};
\draw (62,276.07) node [anchor=north west][inner sep=0.75pt]    {$l$};
\draw (4,192.4) node [anchor=north west][inner sep=0.75pt]    {$4$};
\draw (4.67,241.73) node [anchor=north west][inner sep=0.75pt]    {$1$};
\draw (58,182.07) node [anchor=north west][inner sep=0.75pt]    {$4$};
\draw (153.33,169.07) node [anchor=north west][inner sep=0.75pt]    {$2$};
\draw (152.67,206.73) node [anchor=north west][inner sep=0.75pt]    {$2$};
\draw (252.67,148.73) node [anchor=north west][inner sep=0.75pt]    {$1$};
\draw (210.67,199.73) node [anchor=north west][inner sep=0.75pt]    {$1$};
\draw (248,214.07) node [anchor=north west][inner sep=0.75pt]    {$1$};
\draw (238.67,179.73) node [anchor=north west][inner sep=0.75pt]    {$1$};
\draw (270,181.73) node [anchor=north west][inner sep=0.75pt]    {$2$};
\draw (386.67,240.73) node [anchor=north west][inner sep=0.75pt]    {$2$};
\draw (386,181.07) node [anchor=north west][inner sep=0.75pt]    {$3$};
\draw (348,161.07) node [anchor=north west][inner sep=0.75pt]    {$\Gamma $};
\draw (359.33,272.4) node [anchor=north west][inner sep=0.75pt]    {$\mathbb{P}^{1}$};
\end{tikzpicture}
\caption{A tropical cover in $M^{\trop}_{2<6}(4,1,-2,-3)$ corresponding to a maximal dimensional cone.}
\label{fig:lowerg}
\end{figure}

\begin{definition}
We define the {\it branch polynomial at level $h$} as
$$
br_{h<g}(\mathbf x, 0)  := \rcolor{ \frac{ 24^{g-h}}{x_1} \cdot |\Aut(T_{g-h})}|\cdot \sum_{\sigma_\Gamma \in M^{\trop}_{h<g}(\mathbf{x})} \rcolor{m_\Gamma} \ \varphi_{\sigma_\Gamma},
$$
with the sum running over all maximal cones of $M^{\trop}_{h<g}(\mathbf{x})$, \rcolor{and $m_\Gamma$ denoting the product of the  non-zero expansion factors of the compact edges of $\Gamma$ }.
\end{definition}

\begin{customthm}{C}
The genus $h$ Hurwitz number equals
\begin{equation}
H_{h}(\mathbf{x}) =\deg\left( \LDR \cdot br_{h<g}(\mathbf x, 0) \right).
\end{equation}
\end{customthm}

\begin{proof}
Each maximal cone $\sigma_\Gamma$ in $M^{\trop}_{h<g}(\mathbf{x})$ has dimension $2g-3+n$.
This is true as each such cone is a codimension $g-h$ boundary of a cone with $g-h$ contracting loops. The latter cone is of dimension $3g-3+n-h$, as only the $h$ non-contracted loops impose conditions on the lengths of the involved edges.
Hence the branch polynomial at level $h$ restricts to $\LDR$  to give a class in top degree, as expected.

 For every cone $\sigma_\Gamma$, the pull-back $st^\star(\Delta_{\sigma_\Gamma})$ is supported on the component $C_h\cong \overline{\mathcal{M}}^{\sim}_{h,1}(\PP^1, \mathbf{x}) \times  \overline{\mathcal{M}}_{g-h, 1}$ of the moduli space of relative stable maps ${\overline{\mathcal{M}}^{\sim}_{g}(\PP^1, \mathbf{x})}$ parameterizing a rubber relative stable map of genus $h$ with attached a contracting curve of genus $g-h$.
 
 The piecewise polynomial function $\varphi_{\sigma_\Gamma}$ decomposes as a product:
 \begin{equation}
 \varphi_{\sigma_\Gamma} =l \cdot  \varphi_{\sigma_{ T_{g-h}}} \cdot  \varphi_{\sigma_{\Gamma \smallsetminus T_{g-h}}} ,
 \end{equation}
where $l$ denotes the length of the edge between the point of attachment of the contracted tree and the next vertex to the right (see Figure \ref{fig:lowerg}).

On the component $C_h$, the pull-back $st_{\trop}^\star(l)$ corresponds to the cohomology class $ev_1^\star([pt])$, fixing a point on the target where the component must contract. The class associated to the piecewise polynomial $st_{\trop}^\star(\sum m_\Gamma \ \varphi_{\sigma_{\Gamma \smallsetminus T_{g-h}}})$ agrees with $x_1 \cdot br_h^\star([pt])$, for the genus $h$ branch morphism on the left factor of $C_h$. Note the additional factor $x_1$ comes from the fact that one compact edge with expansion factor $x_1$ is formed by attaching $T_{g-h}$. Finally $st_{\trop}^\star( \varphi_{\sigma_{ T_{g-h}}})$ gives the intersection of the stratum $\Delta_{T_{g-h}}$ with the virtual class of $C_h$. It follows that $[C_h]^{vir} \cdot \Delta_{T_{g-h}} = {\lambda^{R}_{g-h}}_{|\Delta_{T_{g-h}}}$, where the superscript $R$ denotes that the $\lambda$ class is pulled-back from the right factor of $C_h$. 
In conclusion, we have
\begin{equation}
 \LDR \cdot \sum_{\sigma_\Gamma}m_\Gamma\  \Delta_{\sigma_\Gamma} =  st_{trop}^\star \varphi_{\sigma_\Gamma} =\deg\left(x_1 \cdot br_h^\star([pt]) \boxtimes {\lambda^{R}_{g-h}}_{|\Delta_{T_{g-h}}}\right),
\end{equation}
where the class in parenthesis is a class in $ \overline{\mathcal{M}}^{\sim}_{h}(\PP^1, \mathbf{x}) \times  \overline{\mathcal{M}}_{g-h, 1}$. The class pulled back from the left factor has degree $H_h(\mathbf{x})$.
From the isomorphism $$\Delta_{T_{g-h}}\cong \prod_{i=1}^{g-h} \overline{\mathcal{M}}_{1,1}/|\Aut(|T_{g-h}|),$$
the fact that the class $\lambda_{g-h}$ splits as the product of $\lambda_1$'s on each of the factors, and that $\lambda_1$ has degree $1/24$ on $ \overline{\mathcal{M}}_{1,1}$, one may conclude:
\begin{equation}
 \LDR \cdot br_{h<g}(\mathbf x, 0) = H_h(\mathbf{x}).
\end{equation}
\end{proof}

\section{Leaky and pluricanonical numbers}\label{sec:lpn}

In this section we generalize the story of double Hurwitz numbers. Moduli spaces of tropical leaky covers  give rise in a natural way to an enumerative problem analogous to tropical Hurwitz numbers. \rcolor{ Through the perspective of logarithmic geometry introduced in Section \ref{sec:tcpp} we see that the natural branch polynomial arising from tropical geometry yields a cohomology class on the moduli space of curves, whose evaluation against the logarithmic pluricanonical double ramification cycle  $\mathsf{DR}^\lightening(\mathbf x,k)$ coincides with the count of tropical leaky covers, thus providing an algebraic intersection theoretic counterpart to the tropical enumerative problem. 


As before we adopt the simplifying convention of referring to $1$-leaky covers simply as leaky covers. }

\subsection{Tropical leaky numbers}\label{sec: leaky-numbers-tropically}

Consider the moduli spaces of leaky covers $\leak$  introduced in Definition \ref{def:msleaky}. As in \eqref{eq:tropbr},  there is a natural \emph{tropical branch morphism}.

\begin{definition}[Number of leaky covers $\leakyn$]
Fix a genus $g$ and a degree $\mathbf{x}$. We define the \emph{number of leaky covers of genus $g$ and degree $\mathbf{x}$}, denoted  $\leakyn$, to be the degree of the tropical branch morphism $br_{\trop}: \leak \rightarrow \mathsf{tEx}.$

Here, the degree is the weighted number of preimages of a point in the interior of the $r-1$-dimensional cone of $\mathsf{tEx}$. 

 As we see below, it is possible for our leaky covers to have $1$-valent vertices of genus $1$. For those, we define the weight of the corresponding cone to obtain a factor of $-\frac{1}{24}$. This choice is motivated by the correspondence theorem.
 
A preimage is weighted by the product of the weight of the polyhedron in which it lives with the index of the image lattice of this polyhedron under $br_{\trop}$ in the natural lattice of $\mathsf{tEx}$.
\end{definition}
This definition of degree is analogous to Definition 5.14 in \cite{CJM1}, see also Definition 4.1 in \cite{GKM07}.

\begin{example}
The leaky cover in Figure \ref{fig-leakycoverex} contributes to the number $L_{1}((5,-1,-1),1)$ with the weight $1$ times the index of the image lattice of this polyhedron under $br_{\trop}$. (The size of its automorphism group is $1$.)
 By Remark 5.19 of \cite{CJM1}, this product equals the index of the linear map given by the square matrix in which we combine the equations for the weight with the evaluations. By Example \ref{ex-polyhedron}, the polyhedron of the combinatorial type of Figure \ref{fig-leakycoverex} is surrounded by $\RR^4$, thus our matrix is of size $4$. Two of its lines are given by the equation of the cycle, $(3,1,-1,-1)$, and the equation that the middle vertices of $\Gamma$ have the same image, $(0,1,0,-1)$ (see Example \ref{ex-polyhedron}). The second vertex of $\PP^1_{\trop}$  is at distance $l_3$ from the first, the third at distance $l_4$ from the second.
Thus the square matrix to consider is
$$\left(\begin{matrix}
3&1&-1&-1&\\0&1&0&-1\\0&0&1&0\\0&0&0&1
\end{matrix}\right).
$$
The index of the lattice of the image of the linear map defined by this matrix equals the absolute value of its determinant, which is $3$. Thus the leaky cover of Figure \ref{fig-leakycoverex} contributes with weight $3$ to the count of  $L_{1}((5,-1,-1),1)$.

\end{example}

\begin{lemma}
The degree of the tropical branch morphism (and thus, the number of leaky covers of genus $g$ and degree $\mathbf{x}$) is well-defined. All leaky covers in the preimage of a point in the interior of the $r-1$-dimensional cone of $\mathsf{tEx}$ are \emph{one-and-trivalent covers}, i.e.\ each preimage of one of the $r$ vertices of 
\rcolor{the metric line graph $T$}\renzo{will be replacing these as well} contains precisely one vertex which is not of genus $0$ and $2$-valent, and that is of genus $0$ and $3$-valent or of genus $1$ and $1$-valent.
\end{lemma}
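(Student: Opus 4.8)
The plan is to reduce both assertions — well-definedness of the degree, and trivalence of the generic fibre — to one Euler-characteristic identity combined with the stability condition on leaky covers.

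\textbf{Setting up the dimension count.} The cone complex $\mathsf{tEx}$ has a single cone of dimension $r-1$, whose interior parametrises metric line graphs with exactly $r$ distinct vertices. A point $\pi\colon\Gamma\to T$ of $\leak$ maps into this open cone precisely when $T$ has $r$ distinct vertices; by the construction recalled in Section~\ref{sec:leakymod} its combinatorial type $\Theta$ then has target edge set $F$ of size $r-1$, and $br_{\trop}$ restricted to $\sigma_\Theta\cong\RR_{\geq 0}^{F}\times\RR_{\geq 0}^{E_0}$ is the linear projection recording the $r-1$ target edge lengths; its fibres over the top cone of $\mathsf{tEx}$ have dimension $|E_0|$. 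I would first prove that a leaky cover whose target has $r$ distinct vertices has no contracted edges, i.e. $E_0=\emptyset$, so that $\dim\sigma_\Theta=r-1$ and $br_{\trop}|_{\sigma_\Theta}$ is a finite map onto the top cone, of local degree the weight of $\sigma_\Theta$ (Remark~\ref{rem: two-lattices}) times the lattice index of Example~\ref{ex-polyhedron}. Every other combinatorial type has a target with $\neq r$ vertices, hence is carried by $br_{\trop}$ into a proper face of — or a cone of $\mathsf{tEx}$ other than — the $(r-1)$-cone, and so misses a general point $p$ of its interior. Thus $br_{\trop}^{-1}(p)$ is finite, and the weighted count of its points is independent of the general $p$; this last step is the standard statement on the degree of a morphism of weighted cone complexes of equal dimension, exactly as in \cite[Definition~5.14]{CJM1} and \cite[Definition~4.1]{GKM07}.

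\textbf{The Euler-characteristic count.} For any leaky cover $\pi\colon\Gamma\to T$ with $\Gamma$ of genus $g$, put $c(v):=\val(v)-2+2g(v)$. The handshake identity $\sum_v\val(v)=2|E|+n$ together with $g=b_1(\Gamma)+\sum_v g(v)$ gives
\[
\sum_{v\in\Gamma}c(v)=2g-2+n=r,
\]
the same computation that yields $\sum_i x_i=2g-2+n$. Assume now that $T$ has $r$ distinct vertices $v_1,\dots,v_r$. Each $c(v)$ is $\geq 0$ unless $v$ is genus-$0$ and $1$-valent, where $c(v)=-1$; but such a vertex would have its unique edge forced by the leaky condition to expansion factor $1$, so would be a degree-one rational tail carrying a single node, which is unstable and hence absent from $\leak$. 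By the stability condition imposed after the definition of leaky cover, the preimage of each $v_j$ contains a vertex of genus $>0$ or valence $>2$, which therefore has $c\geq 1$; since a vertex of $\Gamma$ maps to a single vertex of $T$, these $r$ vertices are distinct. Hence $r=\sum_v c(v)\geq\sum_{j=1}^{r}1=r$, so equality holds throughout: over each $v_j$ there is exactly one vertex with $c\geq 1$ and it has $c=1$, while every remaining vertex has $c=0$, i.e. is genus $0$ and $2$-valent. A vertex with $c=1$ satisfies $\val+2g=3$, so is either genus-$0$ $3$-valent or genus-$1$ $1$-valent; the latter is a genus-one cul-de-sac, which the present setup excludes (Section~\ref{sec:lpn}). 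It is therefore genus-$0$ $3$-valent — which is exactly the claimed structure — and in particular $\Gamma$ has no vertex of positive genus; since all its edges then carry positive expansion factor, $\Gamma$ has no contracted edges, supplying the input postponed in the previous paragraph.

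\textbf{What will be hard.} The genuinely delicate point is the interplay between the two steps. A priori $\leak$ contains cones of dimension larger than $r-1$, arising from covers with a contracted subcurve (compare the elliptic example after Proposition~\ref{prop:dhntc}); if such a cone surjected onto the top cone of $\mathsf{tEx}$ the fibre over a general point would be positive-dimensional and the degree undefined. The Euler-characteristic identity together with the ``one essential vertex over each target vertex'' form of stability is precisely what excludes this, since a contracted subcurve sitting over one of the $r$ target vertices would produce either a second vertex with $c\geq 1$ there or a vertex with $c\geq 2$, forcing $\sum_v c(v)>r$. The only auxiliary facts invoked — that genus-$0$ $1$-valent vertices and genus-$1$ cul-de-sacs do not occur in $\leak$ — are mild, but should be stated carefully, as they are what pins the essential vertices down to $c=0$ contributions elsewhere.
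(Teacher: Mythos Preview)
Your argument is correct and substantially more detailed than the paper's. For well-definedness the paper gives a one-line direct argument: varying the target edge lengths within the open $(r-1)$-cell produces a bijection of fibres that preserves combinatorial type, and since the weight depends only on the type the weighted count is unchanged. For trivalence the paper says only that it ``follows from counting dimensions.'' Your Euler-characteristic identity $\sum_v(\val(v)-2+2g(v))=2g-2+n=r$, combined with the stability requirement of at least one essential vertex over each of the $r$ target vertices, is precisely such a count, so on the second assertion you have written out what the paper leaves implicit. On the first assertion you take a different route---classifying which cones $\sigma_\Theta$ can dominate the top cell of $\mathsf{tEx}$ and reading off the fibre dimension---rather than the paper's bare bijection; this is more structural, and has the bonus that finiteness of the fibre and the absence of contracted edges both fall out of the same identity.

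One small slip: for a general $k$-leaky cover a hypothetical genus-$0$ $1$-valent vertex would force expansion factor $k$, not $1$, on its unique edge (and only the rightward orientation is consistent with the leaky condition); your instability conclusion is unaffected. Your exclusion of genus-$1$ cul-de-sacs matches the paper's explicit convention at the start of Section~\ref{sec:lpn}.
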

\begin{proof}
Consider a metric line graph $T$ with $r$ vertices. Given a leaky cover of a certain combinatorial type, we can just vary the edge lengths of $\Gamma$ to obtain a leaky cover of another metric line graph $T'$ with $r$ vertices. We therefore obtain a bijection between the preimages under the tropical branch morphism of $T$ and of $T'$, which preserves the combinatorial type. As the weight only depends on the combinatorial type, the weighted count of preimages of $T$ coincides with the weighted count of preimages of $T'$ and accordingly, the degree of the tropical branch morphism is well-defined. 

The statement about the one-and-trivalent covers follows from counting (virtual) dimensions: each vertex must be leaking exactly $k$, that is, $2g(v)-2+\val(v)=1$ which is fulfilled if either $\val(v)=3$ and $g(v)=0$ or $\val(v)=1$ and $g(v)=1$.
\end{proof}

\begin{lemma}
Let $\pi:\Gamma\rightarrow T$ be a preimage under the tropical branch morphism of a point in the interior of the image of $\leak $ in $\mathsf{tEx}$. Then $\pi$ contributes with the product of the expansion factors of the bounded edges  of $\Gamma$ times one over  the size of the automorphism group of $\pi$ times $-\frac{1}{24}$ for every $1$-valent vertex of genus $1$ to the count of leaky covers.
\end{lemma}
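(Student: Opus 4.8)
The plan is to reduce the assertion to the computation of a single determinant attached to $\pi$, and to evaluate that determinant by elementary graph homology. First I would apply the previous lemma: since the chosen point lies in the interior of the top-dimensional cone of $\mathsf{tEx}$, the cover $\pi$ is trivalent, so over each of the $r=2g-2+n$ vertices of $T$ the graph $\Gamma$ carries a unique vertex that is not $2$-valent of genus $0$, and that vertex is $3$-valent of genus $0$; moreover $\Gamma$ has no vertical bounded edge. The combinatorial type of $\pi$ has a cone $\sigma_\pi$ of dimension $r-1$, and $br_{\trop}$ maps $\sigma_\pi$ onto the open $(r-1)$-cone of $\mathsf{tEx}$. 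Unwinding the definition of $\leakyn$ as the degree of $br_{\trop}$, the contribution of $\pi$ is $\tfrac{1}{|\Aut(\pi)|}$ times the weight of $\sigma_\pi$ times the lattice index of $br_{\trop}|_{\sigma_\pi}$, so the lemma amounts to the identity $(\text{weight})\cdot(\text{index}) = \prod_{e}\omega(e)$, the product over the bounded edges of $\Gamma$.

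To compute the left-hand side I would use the square matrix of \cite[Remark~5.19]{CJM1}, exactly as in the worked example above. Work in the coordinates given by the lengths of the bounded edges of $\Gamma$, and write $e_1,\dots,e_m$ for those edges. In these coordinates $\sigma_\pi$ is cut out by $g$ independent ``cycle-closing'' equations, one for each element of a basis $\gamma_1,\dots,\gamma_g$ of $H_1(\Gamma;\ZZ)$, together with one equation for each $2$-valent vertex $u$ of $\Gamma$ expressing that $u$ maps to the same point of $T$ as the unique $3$-valent vertex lying over the same vertex of $T$; the remaining $r-1$ degrees of freedom are recorded by the evaluation forms $\tau_1,\dots,\tau_{r-1}$, where $\tau_j$ is the distance in $T$ between the images of the $j$-th and $(j{+}1)$-st of these $3$-valent vertices. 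Let $M$ be the square matrix whose rows are all of the cutting equations followed by the forms $\tau_1,\dots,\tau_{r-1}$. A lattice-index bookkeeping as in \emph{loc.\ cit.} identifies $\det M$, up to sign, with the product of the weight of $\sigma_\pi$ and the index of $br_{\trop}|_{\sigma_\pi}$; and $\det M$ does not depend on the chosen cycle basis or on the paths used below, since any such change alters a row by an integer combination of the others.

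Next I would factor $M$. Each cycle-closing equation has the shape $\sum_{e\in\gamma_\ell}\pm\omega(e)\,\ell(e)=0$; each ``same-fibre'' equation has the shape $\sum_{e\in Q_u}\pm\omega(e)\,\ell(e)=0$ for a path $Q_u$ in $\Gamma$ joining $u$ to its $3$-valent companion; and each evaluation form is $\tau_j=\sum_{e\in P_j}\pm\omega(e)\,\ell(e)$ for a path $P_j$ in $\Gamma$ joining consecutive $3$-valent vertices. Hence every entry of $M$ is either $0$ or $\pm\omega(e)$, and $M = N\cdot\diag(\omega(e_1),\dots,\omega(e_m))$, where $N$ is the integer matrix, with entries in $\{0,\pm1\}$, whose rows are the $1$-chains $\gamma_1,\dots,\gamma_g$, the paths $Q_u$, and $P_1,\dots,P_{r-1}$, all viewed in $C_1(\Gamma;\ZZ)=\ZZ^m$. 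Thus $|\det M| = |\det N|\cdot\prod_e\omega(e)$, and it remains to prove $|\det N| = 1$, i.e.\ that these $1$-chains form a $\ZZ$-basis of $C_1(\Gamma;\ZZ)$.

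That last step is a short homological computation. Consider $\partial\colon C_1(\Gamma;\ZZ)\to C_0(\Gamma;\ZZ)$. Its kernel is $H_1(\Gamma;\ZZ)$, for which the fundamental cycles $\gamma_1,\dots,\gamma_g$ of a spanning tree of $\Gamma$ form a $\ZZ$-basis; its image is the augmentation-zero sublattice of $C_0(\Gamma;\ZZ)$, and $\partial$ of the remaining rows returns the vectors $u^{\ast}-w^{\ast}$ ($u$ a $2$-valent vertex, $w$ its $3$-valent companion) together with the consecutive differences of the basis vectors of the $3$-valent vertices, which visibly generate that sublattice. So the sublattice $\Lambda\subseteq\ZZ^m$ spanned by the rows of $N$ contains $\ker\partial$ and satisfies $\partial(\Lambda)=\partial(\ZZ^m)$, whence $\Lambda=\ZZ^m$ and $|\det N|=1$. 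Combined with the previous two paragraphs this yields $(\text{weight})\cdot(\text{index})=\prod_e\omega(e)$ and hence the lemma. I expect the main obstacle to be not this determinant but the lattice-index bookkeeping of the second paragraph: it requires matching the two integral structures of Remark~\ref{rem: two-lattices} with the cokernel and kernel of the matrix of cutting equations, so that the weight of $\sigma_\pi$ times the index of $br_{\trop}|_{\sigma_\pi}$ is provably equal to $|\det M|$; this is where the subtleties flagged in that remark must be pinned down.
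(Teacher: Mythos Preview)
Your argument is correct and its overall architecture matches the paper's: both reduce the contribution to the determinant of the square matrix whose rows are the cycle-closing equations, the same-fibre equations, and the $r-1$ target-edge evaluations, exactly as in \cite[Remarks~5.17, 5.19]{CJM1}. The one genuine difference is in how the determinant is evaluated. The paper (following \cite[Section~5.25]{CJM1}) chooses an ordering of the bounded edges that puts $M$ in triangular form with the $\omega(e)$ on the diagonal, reading off $|\det M|=\prod_e\omega(e)$ directly. You instead factor $M=N\cdot\diag(\omega(e))$ and prove $|\det N|=1$ by a clean homological argument: the cycle rows span $\ker\partial$, while the boundaries of the path rows visibly generate the augmentation-zero lattice $\operatorname{im}\partial\subset C_0(\Gamma;\ZZ)$, forcing the rows of $N$ to be a $\ZZ$-basis of $C_1(\Gamma;\ZZ)$. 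This is a perfectly good alternative; it trades the combinatorial task of exhibiting a good edge ordering for a short abstract argument, and has the mild advantage of being insensitive to the choice of paths $Q_u$, $P_j$ and of cycle basis, which you note. Your caveat about the lattice-index bookkeeping identifying $|\det M|$ with (weight)$\cdot$(index) is well placed; that step is indeed the one borrowed wholesale from \cite{CJM1} and from Remark~\ref{rem: two-lattices}, and the paper does not reprove it either.
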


\begin{proof}
The proof is parallel to that of~\cite[Lemma~5.26]{CJM1}, where we show that a tropical cover counts towards the corresponding tropical double Hurwitz number  with the product of the expansion factors of its bounded edges times the size of its automorphism group.
As in Example \ref{ex-polyhedron}, we express the weight of the polyhedron times the index of the image lattice of the evaluation map as the determinant of a square matrix whose lines are given by cycle equations, equations that vertices of $\Gamma$ have the same image as required by the cover, and the evaluations of vertices. This is as in Remark 5.17 and 5.19 in  \cite{CJM1}. Then, we pick an order of the coordinates given by the lengths of the bounded edges which allows us to obtain a triangular form for the square matrix. The entries on the diagonal are precisely the expansion factors of the bounded edges. This is as in~\cite[Section~5.25]{CJM1}.
The factors of $-\frac{1}{24}$ for each $1$-valent vertex of genus $1$ have to be added by definition of the weight of the corresponding cones.
\end{proof}

\begin{example}
Figure \ref{fig-leakycovercountex} shows the count of leaky covers of degree $(5,-1,-1)$ and genus $1$. The third, fourth and seventh pictures have to be counted twice, as they allow two versions of labeling their ends. The second and fourth pictures have an automorphism group of size $2$, as the two edges of the cycle can be permuted. The last three pictures have a factor of $-\frac{1}{24}$ because of the genus $1$ vertex.
Thus the total count equals $9+6+6+3-\frac{3}{24}-\frac{3}{24}-\frac{6}{24}=\frac{47}{2}$. 
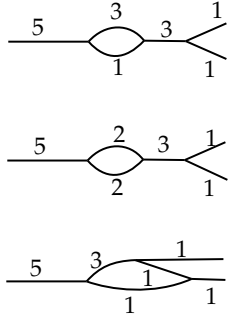
\begin{figure}

\tikzset{every picture/.style={line width=0.75pt}} 

\tikzset{every picture/.style={line width=0.75pt}} 

\begin{tikzpicture}[x=0.75pt,y=0.75pt,yscale=-1,xscale=1]

\draw    (250.67,360.67) .. controls (259.92,352.31) and (281.77,351.53) .. (290.67,360.67) ;
\draw    (210.67,360.33) -- (250.67,360.67) ;
\draw  [fill={rgb, 255:red, 0; green, 0; blue, 0 }  ,fill opacity=1 ] (250.44,439.47) .. controls (250.44,438.71) and (249.87,438.1) .. (249.17,438.1) .. controls (248.47,438.1) and (247.9,438.71) .. (247.9,439.47) .. controls (247.9,440.22) and (248.47,440.84) .. (249.17,440.84) .. controls (249.87,440.84) and (250.44,440.22) .. (250.44,439.47) -- cycle ;
\draw    (160.67,360.33) -- (210.67,360.33) ;
\draw    (250.67,360.67) .. controls (260.47,372.73) and (284.87,369.93) .. (290.67,360.67) ;
\draw    (290.67,360.67) -- (320.67,360.67) ;
\draw    (210.67,360.33) .. controls (230.83,327.3) and (259.17,328.97) .. (310.67,330.33) ;
\draw    (160,150) -- (210,150) ;
\draw    (209.33,150.22) .. controls (218.59,141.86) and (240.44,141.08) .. (249.33,150.22) ;
\draw    (209.33,150.22) .. controls (219.13,162.29) and (243.53,159.49) .. (249.33,150.22) ;
\draw    (249.33,150.22) -- (289.5,149.97) ;
\draw    (289.5,149.97) -- (318.83,140.97) ;
\draw    (289.5,149.97) -- (318.17,160.3) ;
\draw    (160.33,216.67) -- (210.33,216.67) ;
\draw    (209.67,216.89) .. controls (218.92,208.53) and (240.77,207.75) .. (249.67,216.89) ;
\draw    (209.67,216.89) .. controls (219.47,228.96) and (243.87,226.16) .. (249.67,216.89) ;
\draw    (249.67,216.89) -- (289.83,216.63) ;
\draw    (289.83,216.63) -- (319.17,207.63) ;
\draw    (289.83,216.63) -- (318.5,226.97) ;
\draw    (160,291) -- (210,291) ;
\draw    (210,291) .. controls (219.26,282.64) and (232.5,278.97) .. (248.5,279.3) ;
\draw    (210,291) .. controls (217.83,298.63) and (276.83,294.63) .. (289.17,289.97) ;
\draw    (248.5,279.3) -- (289.17,289.97) ;
\draw    (248.5,279.3) -- (319.5,278.3) ;
\draw    (289.17,289.97) -- (318.5,290.63) ;
\draw    (160.67,429.67) -- (210.67,429.67) ;
\draw    (210.67,429.67) -- (249.17,439.47) ;
\draw    (210.67,429.67) .. controls (230.5,418.47) and (237.17,413.8) .. (289.17,409.8) ;
\draw    (289.17,409.8) -- (319.83,401.47) ;
\draw    (289.17,409.8) -- (318.83,418.8) ;
\draw  [fill={rgb, 255:red, 0; green, 0; blue, 0 }  ,fill opacity=1 ] (290.5,510.47) .. controls (290.5,509.71) and (289.93,509.1) .. (289.23,509.1) .. controls (288.53,509.1) and (287.96,509.71) .. (287.96,510.47) .. controls (287.96,511.22) and (288.53,511.84) .. (289.23,511.84) .. controls (289.93,511.84) and (290.5,511.22) .. (290.5,510.47) -- cycle ;
\draw    (161,501.33) -- (211,501.33) ;
\draw    (211,501.33) -- (290.5,510.47) ;
\draw    (211,501.33) .. controls (230.83,490.13) and (232.5,492.47) .. (249.17,486.8) ;
\draw    (249.17,486.8) -- (320.17,473.13) ;
\draw    (249.17,486.8) -- (319.17,490.47) ;
\draw  [fill={rgb, 255:red, 0; green, 0; blue, 0 }  ,fill opacity=1 ] (291.17,589.1) .. controls (291.17,588.34) and (290.6,587.72) .. (289.9,587.72) .. controls (289.19,587.72) and (288.63,588.34) .. (288.63,589.1) .. controls (288.63,589.85) and (289.19,590.47) .. (289.9,590.47) .. controls (290.6,590.47) and (291.17,589.85) .. (291.17,589.1) -- cycle ;
\draw    (160,569.33) -- (210,569.33) ;
\draw    (210,569.33) -- (249.5,575.97) ;
\draw    (210,569.33) .. controls (229.83,558.13) and (303.83,547.3) .. (320.5,541.63) ;
\draw    (249.5,575.97) -- (320.5,562.3) ;
\draw    (249.5,575.97) -- (289.9,589.1) ;

\draw (225.17,337) node  [font=\footnotesize] [align=left] {\begin{minipage}[lt]{12.34pt}\setlength\topsep{0pt}
$\displaystyle 3$
\end{minipage}};
\draw (309.17,354.33) node  [font=\footnotesize] [align=left] {\begin{minipage}[lt]{12.34pt}\setlength\topsep{0pt}
$\displaystyle 1$
\end{minipage}};
\draw (274.39,347.11) node  [font=\footnotesize] [align=left] {\begin{minipage}[lt]{12.34pt}\setlength\topsep{0pt}
$\displaystyle 1$
\end{minipage}};
\draw (278.72,381.44) node  [font=\footnotesize] [align=left] {\begin{minipage}[lt]{12.34pt}\setlength\topsep{0pt}
$\displaystyle 1$
\end{minipage}};
\draw (302.5,324) node  [font=\footnotesize] [align=left] {\begin{minipage}[lt]{12.34pt}\setlength\topsep{0pt}
$\displaystyle 1$
\end{minipage}};
\draw (194.83,354) node  [font=\footnotesize] [align=left] {\begin{minipage}[lt]{12.34pt}\setlength\topsep{0pt}
$\displaystyle 5$
\end{minipage}};
\draw (187.83,143) node  [font=\footnotesize] [align=left] {\begin{minipage}[lt]{12.34pt}\setlength\topsep{0pt}
$\displaystyle 5$
\end{minipage}};
\draw (267.83,144.22) node  [font=\footnotesize] [align=left] {\begin{minipage}[lt]{12.34pt}\setlength\topsep{0pt}
$\displaystyle 3$
\end{minipage}};
\draw (233.72,135.67) node  [font=\footnotesize] [align=left] {\begin{minipage}[lt]{12.34pt}\setlength\topsep{0pt}
$\displaystyle 3$
\end{minipage}};
\draw (237.39,171) node  [font=\footnotesize] [align=left] {\begin{minipage}[lt]{12.34pt}\setlength\topsep{0pt}
$\displaystyle 1$
\end{minipage}};
\draw (311.5,139) node  [font=\footnotesize] [align=left] {\begin{minipage}[lt]{12.34pt}\setlength\topsep{0pt}
$\displaystyle 1$
\end{minipage}};
\draw (309.83,169.33) node  [font=\footnotesize] [align=left] {\begin{minipage}[lt]{12.34pt}\setlength\topsep{0pt}
$\displaystyle 1$
\end{minipage}};
\draw (187.83,209.33) node  [font=\footnotesize] [align=left] {\begin{minipage}[lt]{12.34pt}\setlength\topsep{0pt}
$\displaystyle 5$
\end{minipage}};
\draw (268.17,210.89) node  [font=\footnotesize] [align=left] {\begin{minipage}[lt]{12.34pt}\setlength\topsep{0pt}
$\displaystyle 3$
\end{minipage}};
\draw (233.72,201.67) node  [font=\footnotesize] [align=left] {\begin{minipage}[lt]{12.34pt}\setlength\topsep{0pt}
$\displaystyle 2$
\end{minipage}};
\draw (233.06,240.33) node  [font=\footnotesize] [align=left] {\begin{minipage}[lt]{12.34pt}\setlength\topsep{0pt}
$\displaystyle 2$
\end{minipage}};
\draw (311.83,205.67) node  [font=\footnotesize] [align=left] {\begin{minipage}[lt]{12.34pt}\setlength\topsep{0pt}
$\displaystyle 1$
\end{minipage}};
\draw (309.83,238.33) node  [font=\footnotesize] [align=left] {\begin{minipage}[lt]{12.34pt}\setlength\topsep{0pt}
$\displaystyle 1$
\end{minipage}};
\draw (332.17,281.33) node  [font=\footnotesize] [align=left] {\begin{minipage}[lt]{12.34pt}\setlength\topsep{0pt}
$\displaystyle 1$
\end{minipage}};
\draw (330.83,298.67) node  [font=\footnotesize] [align=left] {\begin{minipage}[lt]{12.34pt}\setlength\topsep{0pt}
$\displaystyle 1$
\end{minipage}};
\draw (261.17,289.33) node  [font=\footnotesize] [align=left] {\begin{minipage}[lt]{12.34pt}\setlength\topsep{0pt}
$\displaystyle 1$
\end{minipage}};
\draw (233.83,306.33) node  [font=\footnotesize] [align=left] {\begin{minipage}[lt]{12.34pt}\setlength\topsep{0pt}
$\displaystyle 1$
\end{minipage}};
\draw (230.83,275.22) node  [font=\footnotesize] [align=left] {\begin{minipage}[lt]{12.34pt}\setlength\topsep{0pt}
$\displaystyle 3$
\end{minipage}};
\draw (182.5,284) node  [font=\footnotesize] [align=left] {\begin{minipage}[lt]{12.34pt}\setlength\topsep{0pt}
$\displaystyle 5$
\end{minipage}};
\draw (331.83,404.67) node  [font=\footnotesize] [align=left] {\begin{minipage}[lt]{12.34pt}\setlength\topsep{0pt}
$\displaystyle 1$
\end{minipage}};
\draw (329.5,423.67) node  [font=\footnotesize] [align=left] {\begin{minipage}[lt]{12.34pt}\setlength\topsep{0pt}
$\displaystyle 1$
\end{minipage}};
\draw (261.83,446) node  [font=\footnotesize] [align=left] {\begin{minipage}[lt]{12.34pt}\setlength\topsep{0pt}
$\displaystyle 1$
\end{minipage}};
\draw (252.17,409.33) node  [font=\footnotesize] [align=left] {\begin{minipage}[lt]{12.34pt}\setlength\topsep{0pt}
$\displaystyle 3$
\end{minipage}};
\draw (189.5,422.67) node  [font=\footnotesize] [align=left] {\begin{minipage}[lt]{12.34pt}\setlength\topsep{0pt}
$\displaystyle 5$
\end{minipage}};
\draw (332.17,476.33) node  [font=\footnotesize] [align=left] {\begin{minipage}[lt]{12.34pt}\setlength\topsep{0pt}
$\displaystyle 1$
\end{minipage}};
\draw (329.83,495.33) node  [font=\footnotesize] [align=left] {\begin{minipage}[lt]{12.34pt}\setlength\topsep{0pt}
$\displaystyle 1$
\end{minipage}};
\draw (262.17,517.67) node  [font=\footnotesize] [align=left] {\begin{minipage}[lt]{12.34pt}\setlength\topsep{0pt}
$\displaystyle 1$
\end{minipage}};
\draw (226.5,485.33) node  [font=\footnotesize] [align=left] {\begin{minipage}[lt]{12.34pt}\setlength\topsep{0pt}
$\displaystyle 3$
\end{minipage}};
\draw (189.83,494.33) node  [font=\footnotesize] [align=left] {\begin{minipage}[lt]{12.34pt}\setlength\topsep{0pt}
$\displaystyle 5$
\end{minipage}};
\draw (331.17,544.33) node  [font=\footnotesize] [align=left] {\begin{minipage}[lt]{12.34pt}\setlength\topsep{0pt}
$\displaystyle 1$
\end{minipage}};
\draw (275.17,594.33) node  [font=\footnotesize] [align=left] {\begin{minipage}[lt]{12.34pt}\setlength\topsep{0pt}
$\displaystyle 1$
\end{minipage}};
\draw (329.83,566.67) node  [font=\footnotesize] [align=left] {\begin{minipage}[lt]{12.34pt}\setlength\topsep{0pt}
$\displaystyle 1$
\end{minipage}};
\draw (245.83,584.33) node  [font=\footnotesize] [align=left] {\begin{minipage}[lt]{12.34pt}\setlength\topsep{0pt}
$\displaystyle 3$
\end{minipage}};
\draw (188.83,562.33) node  [font=\footnotesize] [align=left] {\begin{minipage}[lt]{12.34pt}\setlength\topsep{0pt}
$\displaystyle 5$
\end{minipage}};

\end{tikzpicture}

\caption{The count of leaky covers of degree $(5,-1,-1)$ and genus $1$.} \label{fig-leakycovercountex}
\end{figure}
\end{example}

We now introduce the analogous concept to the monodromy graphs from Definition \ref{def-mongraph}.

\rcolor{\begin{definition}
\label{def-leakygraph}
For fixed $g$ and $\mathbf{x}=(x_1,\ldots,x_n)$, a graph $\Gamma$ is a \emph{k-leaky graph} if it satisfies conditions $(1)$, $(3)$ and $(4)$ in Definition \ref{def-mongraph}, with $(2)$ changed as follows, and as well as the following \emph{leaky condition} replacing $(5)$:
\begin{description}
\item[(2')] Vertices which are not $2$-valent can be $1$-valent of genus $1$, or $3$-valent.
\item[(5')] Every bounded edge $e$ of $\Gamma$ is equipped with an expansion factor $w(e)\in \NN$. At each $3$-or $1$-valent vertex the sum of all expansion factors of  incoming  edges equals the sum of the expansion factors of all outgoing edges plus $k$.
\end{description}
\end{definition}}\renzo{This seems more efficient than repeating all (1)-(5) again. Also, it highlights what the difference is.}

As in the case for monodromy graphs, $k$-leaky graphs are naturally in bijection with maximal dimensional cones of the moduli space of $k$-leaky covers, whose general element contain no contracting subgraph. The following theorem is an immediate consequence:


\begin{theorem}\label{thm: leaky-formula}
The number $\leakyn$ equals the sum over all leaky graphs $\Gamma$, where each is counted with the product of the expansion factors of its bounded edges times vertex multiplicities$\mult_v$  which we declare to be equal to $1$ if $v$ is $3$-valent and equal to $-\frac{1}{24}$ if $v$ is $1$-valent of genus $1$:
$$\leakyn = \sum_{\Gamma} \frac{1}{|\Aut(\Gamma)|} \prod_e \omega(e)\prod_v \mult_v.$$
\end{theorem}

We introduce the leaky branch polynomial.

\begin{definition}
The {\it $k$-leaky branch polynomial} $br_g(\mathbf x, k)$ is
$$
br_g(\mathbf x, k):=\sum \varphi_{\sigma_\Gamma},
$$
where $\varphi_{\sigma_\Gamma}$ is defined as in \eqref{eq:bpolym}, and the sum runs over all cones corresponding to $k$-leaky graphs.
\end{definition}

We conclude this section with the correspondence theorem between leaky  numbers and the degree of a zero dimensional cycle obtained by evaluating the leaky branch polynomial on the pluricanonical double ramification cycle.
\begin{customthm}{B}\label{prop:lhntc}
With notation as developed throughout, we have
\begin{equation}
\leakyn =\deg\left( [\mathsf{DR}^\lightening(\mathbf x,k)] \cdot  br_g(\mathbf x, k)\right).
\end{equation}
\end{customthm}

The proof will pass through the geometry of expanded degenerations in the normal crossings setting, and we recall the required notions. Further details may be found in~\cite{R19}. 

\subsubsection{The tropical map} Let $\Gamma$ be a tropical curve with positive integer edge lengths corresponding to an integer point of $\mathsf{DR}_g^{\trop}(\mathbf x, k)$. There is an associated piecewise linear function, unique up to translation, given by 
\[
t: \Gamma \to \RR,
\]
which is a putative tropicalization of a section of the $k$-pluricanonical line bundle of curve with the given profile of zeros and poles $\mathbf x$. We assume that the tropical curve $\Gamma$ is trivalent, with all vertices having genus $0$, and furthermore that $t$ has positive slope on all bounded edges of $\Gamma$. 


\subsubsection{Projective bundle} Let $C$ be a logarithmically smooth curve over the standard logarithmic point $\spec(\NN\to\CC)$, whose tropicalization is isomorphic to $\Gamma$; note that we view it as a metric graph, rather than as a family over $\RR_{\geq 0}$, by slicing at height $1$. Consider the projective bundle $\mathbb P(\mathcal O_C\oplus \omega_C^{\mathsf{log}})$ completing the total space of the logarithmic cotangent line bundle. We denote it by $\mathbb P$ for brevity. The surface is equipped with a logarithmic structure of rank $2$, by adding the $0$ and $\infty$ divisors of the bundle to the pullback logarithmic structure on $C$. 

\noindent
\subsubsection{Tropical subdivisions} The tropicalization of $\mathbb P$ is a polyhedral decomposition of $\Gamma\times \RR$ with a distinguished integral lattice structure. We will use two refinements of the polyhedral structure, including passing to a finite index sublattice. First, consider the function
\[
t: \Gamma \to \RR,
\]
and refine the polyhedral structure by declaring the image of every vertex to be a vertex of $\RR$ and then further that the preimage of each vertex of $\RR$ is a vertex of $\Gamma$. The result is a map of metric graphs
\[
t: \rcolor{\Gamma^+}\to T.
\]\renzo{I added the $+$ here.}
The first polyhedral structure of interest is $\Gamma^+\times T$. The cells in the decomposition are either vertices, edges, or squares. We may factorize the section through the graph:
\[
 \Gamma^+\to \Gamma^+\times T\to T.
\]
We note that set theoretically the map $t$ is unchanged, we have merely refined the polyhedral structure. We denote $ \Gamma^+\times T$ by $\Delta$ for compactness of notation in the following discussion. Note that the vertices of $\Delta$ may have rational coordinates that are not integral. However, the polyhedral complex can be uniformly scaled. Precisely, to scale by a positive integer, we uniformly multiply all the edges in $\Gamma$ by this length; it admits a unique map to $\RR$ with the given slopes and leaky balancing condition. We can then repeat the construction. There is a well-defined smallest positive scaling factor such that the construction outputs $\Delta$ with all vertices having integer coordinates. If we choose coordinates on the cones, this is the least common multiple of all the denominators that appear in lowest terms of the coordinates. We will use this below, and so refer to this as the {\it base order}. 

In order to understand the need for the second polyhedral structure, note that if $e$ is an edge of $\Gamma$, then $t(e)$  necessarily connects two vertices of $\Delta$; however, this means that the image $t(e)$  is \textit{not} a cell of $\Delta$. Geometrically, this  corresponds to the situation where nodes and marked points on a curve map to strata of the target of smaller dimension than expected. The situation is remedied by a further refinement. Let $\Pi_t$ be polyhedral refinement of $\Delta$ obtained by subdividing along the image under $t$ of all edges $e$ in $\Gamma^+$. We still have a factorization
\[
t:\Gamma^+ \to \Pi_t\to T.
\]
The situation may be visualized as in Figure~\ref{fig: tropical-subdivisions} below. 

\begin{figure}[h!]
\tikzset{every picture/.style={line width=0.75pt}} 

\begin{tikzpicture}[x=0.75pt,y=0.75pt,yscale=-1,xscale=1]

\draw    (130,50) -- (170,50) ;
\draw    (130,190) -- (170,190) ;
\draw    (170,190) -- (170,50) ;
\draw    (130,190) -- (130,50) ;
\draw    (170,50) .. controls (187.2,68.67) and (203.2,68.67) .. (220,50) ;
\draw    (170,190) .. controls (187.2,208.67) and (203.2,208.67) .. (220,190) ;
\draw    (170,50) .. controls (190.4,40.67) and (198.8,39.87) .. (220,50) ;
\draw  [dash pattern={on 0.84pt off 2.51pt}]  (170,190) .. controls (190.4,180.67) and (198.8,179.87) .. (220,190) ;
\draw    (220,190) -- (220,50) ;
\draw    (260,190) -- (260,50) ;
\draw    (220,50) -- (260,50) ;
\draw    (220,190) -- (260,190) ;
\draw    (260,50) -- (290,40) ;
\draw  [dash pattern={on 0.84pt off 2.51pt}]  (260,190) -- (290,180) ;
\draw    (260,50) -- (300,60) ;
\draw    (260,190) -- (300,200) ;
\draw  [dash pattern={on 0.84pt off 2.51pt}]  (290,180) -- (290,40) ;
\draw    (300,200) -- (300,60) ;
\draw    (290,40) -- (290,60) ;
\draw    (150,190) -- (170,140) ;
\draw    (170,140) .. controls (192.8,150.47) and (205.6,148.87) .. (220,130) ;
\draw  [dash pattern={on 0.84pt off 2.51pt}]  (170,140) .. controls (180.8,127.67) and (198.8,119.87) .. (220,130) ;
\draw    (220,130) -- (260,110) ;
\draw    (260,110) -- (300,100) ;
\draw  [dash pattern={on 0.84pt off 2.51pt}]  (260,110) -- (290,90) ;

\draw (149,154) node [anchor=north west][inner sep=0.75pt] [font=\footnotesize] [align=left] {$\displaystyle 5$};
\draw (201,144) node [anchor=north west][inner sep=0.75pt] [font=\footnotesize] [align=left] {$\displaystyle 1$};
\draw (191,112) node [anchor=north west][inner sep=0.75pt] [font=\footnotesize] [align=left] {$\displaystyle 3$};
\draw (239,104) node [anchor=north west][inner sep=0.75pt] [font=\footnotesize] [align=left] {$\displaystyle 3$};
\draw (279,104) node [anchor=north west][inner sep=0.75pt] [font=\footnotesize] [align=left] {$\displaystyle 1$};
\draw (271,84) node [anchor=north west][inner sep=0.75pt] [font=\footnotesize] [align=left] {$\displaystyle 1$};
\end{tikzpicture}
\caption{The graph of a leaky tropical curve.}\label{fig: tropical-subdivisions}
\end{figure}
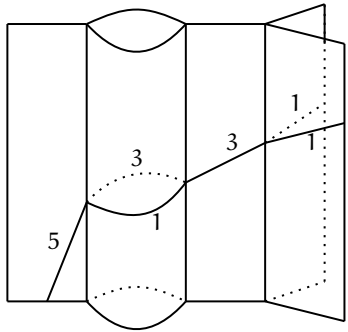

\subsubsection{Geometric expansions and logarithmic lifts} Given a polyhedral decomposition --- for us it will be either $\Pi_t$ or $\Delta$ --- of $\Gamma\times \RR$, there is an associated \textit{expansion} of the total space of $\mathbb P$ over curve $C$, \rcolor{which in our relevant cases will be denoted $\PP(\Pi_t)$ or $\PP(\Delta)$}. The expansion can be viewed as the special fiber of a $1$-parameter degeneration, or as we do here, a logarithmic scheme over $\Spec(\NN\to \CC)$. The {\it base order} mentioned above is relevant here. If the base order is $b$, then we perform a generalized root construction to this scheme $\Spec(\NN\to \CC)$ given by passing to the index $b$ sublattice generated by $b\in \NN$. The expansion of $\mathbb P$ associated to the polyhedral decomposition is defined over this new logarithmic point. See~\cite[Section~6.2]{BHPSS} for a similar discussion, or~\cite{AK00,Mol16} for a discussion about ramified base changes in semistable reduction in a broader context. 

We give a brief explanation of how to use the formalism described in~\cite[Section~3]{R19}.  First, the subdivision $\Gamma^+$ of $\Gamma$ produces a bubbling of the curve $C$ which we denote $C^+$, obtained by adding $2$-pointed rational components wherever new $2$-valent vertices have appeared in $\Gamma^+$. The expansions of $\mathbb P$ obtained from $\Pi_t$ and $\Delta$ are $2$-dimensional schemes with a flat map to $C$; they are typically reducible, and the irreducible components are in bijection with the vertices of $\Pi_t$ or $\Delta$ respectively. Moreover, if $v$ is a vertex, then the irreducible component corresponding to $v$ is birational to the projectively completed logarithmic canonical bundle of $C_v$, where $C_v$ is the component of the curve  $C^+$ determined by the image of $v$ in $\Gamma^+$. See also~\cite{ACGS15,NS06}. 

We record a basic lemma. The notation $C^+$ will be used to denote a semistable bubbling induced by subdivisions $\Gamma^+$ of $\Gamma$, as in the paragraph above. 

\begin{lemma}
Let $s:C\to \mathbb P$ be a section. Then $s$ can be enhanced to a logarithmic map if and only if it it admits a factorization
\[
s:C^+\to \mathbb P(\Pi_t)\to \mathbb P,
\]
such the map $C^+\to \mathbb P(\Pi_t)$ sends nodes and markings to strata of codimension at most $1$, and it is predeformable: at each node $q$ of $C$ mapping to a divisor $D$ in $\mathbb P(\Pi_t)$, the tangency order along $q$ as measured on the two components meeting at $q$ coincide. 
\end{lemma}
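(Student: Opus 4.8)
The plan is to deduce the lemma from the general dictionary between logarithmic morphisms to a toroidal target and predeformable morphisms to expansions, in the form recorded in \cite[Section~3]{R19}; the only genuine work is to identify our subdivision $\Pi_t$ with the combinatorial data carried by the tropical map $t$. Recall first that a logarithmic enhancement of $s$ is the datum of a logarithmic curve $\tilde C\to\Spec(\NN\to\CC)$ together with a logarithmic morphism $\tilde C\to\mathbb{P}$ over the logarithmic point whose underlying map recovers $s$ after stabilization. Tropicalizing, such a morphism induces a map of generalized cone complexes $\Sigma(\tilde C)\to\Sigma(\mathbb{P})$. Since $\mathbb{P}=\mathbb{P}(\mathcal O_C\oplus\omega_C^{\mathsf{log}})$ is equipped with the divisorial logarithmic structure adding the $0$ and $\infty$ sections, its tropicalization is the polyhedral complex $\Gamma\times\RR$, and because $s$ is a section, the induced tropical map is (up to the $\RR$-translation ambiguity of the rubber target) precisely the function $t:\Gamma\to\RR$ introduced above. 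This is the step that brings $t$ and $\Pi_t$ into the picture.

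\textbf{Forward direction.} Given a logarithmic enhancement, I would invoke the fact that a logarithmic morphism to a toroidal target factors, after a logarithmic modification of the source, through the expansion determined by the minimal subdivision of $\Sigma(\mathbb{P})$ along which the tropical map becomes cellular. By construction this subdivision is exactly $\Pi_t$: the refinement $\Delta=\Gamma^+\times T$ is obtained by declaring each vertex image to be a vertex, and $\Pi_t$ further subdivides $\Delta$ along the images $t(e)$ of the edges $e$ of $\Gamma^+$, which are precisely the cells into which the nodes and ends of $C$ must map. Hence the logarithmic morphism factors as $C^+\to\mathbb{P}(\Pi_t)\to\mathbb{P}$, where $C^+$ is the semistable bubbling induced by $\Gamma^+$; edges of $\Gamma^+$ map to cells of $\Pi_t$ of dimension at most one, which translates into nodes and markings of $C^+$ landing in strata of $\mathbb{P}(\Pi_t)$ of codimension at most one. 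Finally, predeformability is read off from the local model at a node: if $q$ is smoothed by $xy=\pi^{\ell}$ and maps to a node of $\mathbb{P}(\Pi_t)$ smoothed by $uv=\pi^{m}$, then the existence of the logarithmic morphism forces each of the two contact orders of $C^+\to\mathbb{P}(\Pi_t)$ along $q$ to equal $m/\ell$, hence to agree with one another; this is exactly the predeformability condition in the statement, and it corresponds to the compatibility of the slopes $d_v^l, d_v^r$ of $t$ across $q$.

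\textbf{Converse.} Starting from a predeformable factorization $C^+\to\mathbb{P}(\Pi_t)\to\mathbb{P}$, I would build the logarithmic structure directly. The expansion $\mathbb{P}(\Pi_t)$ is the special fibre of a one-parameter toroidal degeneration; after performing the root construction of order equal to the base order $b$ on $\Spec(\NN\to\CC)$, as in \cite[Section~6.2]{BHPSS} and \cite{AK00,Mol16}, it becomes logarithmically smooth over this refined logarithmic point. Predeformability is precisely the condition --- checked node by node in the coordinates above, where it reads $a=b$ for a map $(x,y)\mapsto(x^{a},y^{b})$ between the two smoothings --- under which the pullback of the characteristic monoid sheaf of $\mathbb{P}(\Pi_t)$ along $s$ can be completed to a logarithmic structure on $C^+$ making $s$ a logarithmic morphism; this is the content of \cite[Section~3]{R19}, see also \cite{ACGS15,NS06}. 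Composing with the logarithmic modification $\mathbb{P}(\Pi_t)\to\mathbb{P}$, and using that $C^+\to C$ is itself a logarithmic modification of logarithmic curves and hence does not alter the notion of a logarithmic map out of the curve, yields the desired logarithmic enhancement of $s$.

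\textbf{Main obstacle.} The delicate point is the bookkeeping of the base order $b$ and the accompanying root construction on the base: one must verify that $\Pi_t$ together with the index-$b$ sublattice on $\Spec(\NN\to\CC)$ is genuinely the minimal datum, so that the factorization exists if and only if a logarithmic lift exists, rather than merely being sufficient --- equivalently, that no finer subdivision of $\Gamma\times\RR$ and no further base change is ever required. This rests on the logarithmic stable reduction statement in the toroidal and expanded setting, namely that a logarithmic map to a toroidal variety factors through the subdivision it induces, applied here to the explicit surface $\mathbb{P}(\Pi_t)$ over $C$.
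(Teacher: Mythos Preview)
Your proposal is correct and follows essentially the same approach as the paper: the forward direction tropicalizes and applies semistable reduction to obtain the factorization through $\mathbb{P}(\Pi_t)$, while the converse uses that transverse (predeformable) maps to an expanded target always admit logarithmic enhancements. The paper's proof is much terser—citing \cite{AK00,Mol16} and \cite[Section~2]{R19} for the forward direction and \cite[Lemma~6.4.3]{R19} for the converse—whereas you unpack the local predeformability computation and the identification of the subdivision with $\Pi_t$; your ``main obstacle'' about minimality is not actually needed, since for the if-and-only-if it suffices that the semistable reduction procedure produces a subdivision compatible with $\Pi_t$, which holds by construction of $\Pi_t$.
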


\begin{proof}
First, suppose the section can be enhanced to a logarithmic one. There is an induced map on the tropicalizations, i.e. the cone stacks associated to $C$ and $\mathbb P$. Explicitly, these are cone over the tropical curve determined by $C$ and $\mathbb R$ times this tropical curve. By using the semistable reduction procedure~\cite{AK00,Mol16}, as applied for instance in~\cite[Section~2]{R19}, we can subdivide the source and target of this tropicalization so that each cone of the subdivided source maps surjectively onto a cone of the subdivided target. By using the dictionary between subdivisions and logarithmic modification, we obtain the claimed factorization. Conversely, transverse maps always admit logarithmic enhancements, see for instance~\cite[Lemma~6.4.3]{R19}. 
\end{proof}

\subsubsection{Curves with a fixed tropicalization}\label{sec: curves-with-tropicalization} We require one final technical ingredient before proceeding to the proof. Let $C$ be a cycle of rational curves consisting of $k$ components and denote by $G$ its dual graph. Let $u$ and $v$ be vertices of $G$. There are exactly two non-self-intersecting paths from $u$ to $v$. We denote them by $P$ and $P'$. 

Fix positive integers $a$ and $b$. Consider a tropical curve $\Gamma$ over {$G$} with positive integer edge lengths such that $a\ell(P) = b\ell(P')$, where $\ell$ is the length function.  For the following lemma, we fix such a tropical curve. 

\begin{lemma}\label{sec: log-moduli-fixed-tropicalization}
The moduli space of logarithmic curves over $\spec (\NN\to \CC)$ whose underlying curve is $C$, and whose tropicalization is $\Gamma$, is isomorphic to $\mathbb G_m$. 
\end{lemma}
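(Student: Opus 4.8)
The plan is to parametrize the logarithmic curves in question explicitly and see directly that the parameter space is $\mathbb{G}_m$. Recall that a logarithmic curve over the standard log point $\spec(\NN\to\CC)$ amounts to a prestable curve together with a logarithmic enhancement, and over a fixed underlying nodal curve the extra data is governed entirely by the combinatorics of smoothing parameters at the nodes, compatibly with the chosen tropicalization. The underlying curve $C$ is a fixed cycle of $k$ rational curves, so it has exactly $k$ nodes $q_1,\dots,q_k$. A logarithmic structure on $C$ over $\spec(\NN\to\CC)$ is specified by assigning to each node $q_i$ a positive integer $\delta_i$ --- the \emph{smoothing order}, which records how the local equation $xy=t^{\delta_i}$ degenerates --- together with a choice of logarithmic structure on the generic point, which for a single standard log point of rank $1$ contributes the $\mathbb{G}_m$-worth of data (the section of the characteristic sheaf up to the $\mathbb{G}_m$-action, i.e.\ the choice of $t$ itself, modulo nothing, giving a torsor). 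The constraint that the tropicalization equals the fixed $\Gamma$ is precisely that the edge lengths $\ell(e_i)$ of $\Gamma$ equal the $\delta_i$, so once $\Gamma$ is fixed the discrete data $\delta_i$ is pinned down and the only remaining modulus is the $\mathbb{G}_m$ coming from the automorphism of the base log point, or equivalently the trivialization of the tangent-space-at-the-node data.

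First I would make precise what ``moduli space of logarithmic curves with fixed underlying curve and fixed tropicalization'' means: it is the fiber of the map from $\mathfrak{M}_{g,n}^{\log}$ (or the relevant stack of logarithmic curves over the standard log point) to $\mathfrak{M}_{g,n}\times(\text{cone complex data})$ over the point $(C,\Gamma)$. Concretely, a logarithmic curve over $\spec(\NN\xrightarrow{1}\CC)$ with underlying curve $C$ is the same as a choice of line bundle on $C$ of a prescribed multidegree trivialized away from the nodes, together with gluing data --- this is the standard description of log smooth curves via the Picard stack of the dual graph, as in work of Olsson and the Marcus--Wise framework cited in the excerpt. The cycle graph $G$ has first Betti number $1$, so its Picard/gluing torus is $\mathbb{G}_m$, and fixing the tropicalization (the metric on $G$, i.e.\ the integers $\delta_i$) does not cut this torus down at all: the length relation $a\ell(P)=b\ell(P')$ is a relation among the fixed integers $\delta_i$, automatically satisfied by the hypothesis on $\Gamma$, and imposes nothing further. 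Hence the moduli space is a $\mathbb{G}_m$-torsor over a point, i.e.\ $\mathbb{G}_m$.

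The step I expect to be the main obstacle is being careful about which stack one means and about the balancing/length relation: one must confirm that the condition $a\ell(P)=b\ell(P')$ is genuinely a condition only on the discrete data $\Gamma$ and not an additional equation on the continuous parameter. This is where the cycle structure matters --- for a graph of first Betti number $1$ there is a single cycle relation, the one displayed, and it is a relation in the cone $\sigma_\Gamma$ that is already solved once we sit at the fixed integer point $\Gamma$. I would also need to check that no automorphisms of $C$ act nontrivially on this $\mathbb{G}_m$ in a way that would quotient it further; since we are describing a \emph{moduli space} (the fiber), and the relevant automorphisms of the data preserving $\Gamma$ act trivially on the generic gluing parameter up to the identification already made, this does not occur. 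Assembling these observations --- (i) log curves over the standard log point with fixed underlying $C$ are classified by a Picard torus of $G$ together with smoothing orders, (ii) for the cycle $G$ that torus is $\mathbb{G}_m$, (iii) fixing $\Gamma$ fixes the smoothing orders and imposes no further constraint --- gives the isomorphism with $\mathbb{G}_m$.
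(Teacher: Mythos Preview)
Your second paragraph lands on the correct mechanism and is essentially what the paper does: logarithmic curves over the standard log point with fixed underlying curve $C$ and fixed tropicalization $\Gamma$ are classified by a torus of dimension $b_1(G)$, which for the cycle is $\mathbb{G}_m$. The paper's proof phrases this as: all such log curves arise as pullbacks of the minimal log curve $\mathcal{C}\to\spec(\NN^E\to\CC)$ along maps $\spec(\NN\to\CC)\to\spec(\NN^E\to\CC)$ with prescribed map on characteristic monoids (namely $e_i\mapsto\ell(e_i)$), and the space of such pullbacks is $\mathbb{G}_m$. The paper defers the verification to its reference; your gluing-torus description is exactly what the paper spells out in the paragraph \emph{following} the proof.

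Your first paragraph, however, misidentifies where the $\mathbb{G}_m$ lives. It is not ``the choice of $t$ itself'' nor ``the automorphism of the base log point'': the base $\spec(\NN\to\CC)$ is fixed and contributes nothing. Nor is there any modulus in the log structure at a generic (smooth, unmarked) point of $C$, which is simply pulled back from the base. All the continuous freedom sits at the nodes. Na\"ively there is a $(\mathbb{G}_m)^k$ of choices --- a unit $u_i$ in each local relation $\log x_i+\log y_i=\delta_i\log\tau+\log u_i$ --- and the $\mathbb{G}_m$ appears only after quotienting by the component-wise rescalings of $C$, whose action on the $u_i$ has one-dimensional cokernel (the product $\prod_i u_i$, equivalently the holonomy around the cycle, is the invariant). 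Your sentence ``its Picard/gluing torus is $\mathbb{G}_m$'' asserts the outcome of exactly this computation without carrying it out; that is the step to make explicit. Also, describing a log curve as ``a line bundle of prescribed multidegree trivialized away from the nodes'' is not right --- that is the \emph{interpretation} the paper gives of the resulting $\mathbb{G}_m$ (via the piecewise linear function and the connecting map from $\overline{M}_C^{\mathsf{gp}}$), not a description of the log structure itself.

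You are correct that the constraint $a\ell(P)=b\ell(P')$ is purely a condition on the discrete data $\Gamma$ and imposes no equation on the continuous parameter; its role is only to guarantee the existence of the piecewise linear function used immediately after the lemma.
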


\begin{proof}
Let $E$ be the edge set of $G$. The minimal base logarithmic structure on $C$ gives a curve $\mathcal C\to \spec(\mathbb N^E\to \CC)$. The logarithmic curves in question are all obtained by pullbacks $\spec(\mathbb N\to \CC)\to \spec(\mathbb N^E\to \CC)$, with the map on monoids specified by the given relation among the path lengths of $P$ and $P'$, which is isomorphic to $\mathbb G_m$. Further details and a hands on construction of the space of logarithmic realizations of a tropical curve may be found in~\cite[Section~1.5]{BNR22}. 
\end{proof}

It is useful to understand this $\mathbb G_m$ as determining gluing data for a line bundle. The tropical curve $\Gamma$ above admits a piecewise linear function that is linear along the path $P$ with slope $b$ and is linear along the path $P'$ with slope $a$. The function is unique up to the addition of constants. It determines a line bundle on the curve $C$ -- the piecewise linear function gives rise to a section of the characteristic abelian sheaf $\overline M_C^{\sf gp}$, and the short exact sequence
\[
0\to \cO_C^\star\to M_C^{\sf gp}\to \overline M_C^{\sf gp}\to 0
\]
gives rise a connecting map that produces a $\mathbb G_m$-torsor from this section. Alternatively, viewing the logarithmic structure as giving local maps to toric varieties, the standard construction associating line bundles to piecewise linear functions in toric geometry can be used. 

In any event, the line bundle on each component of $C$ is determined up to isomorphism as the weighted sum of the nodes of the component, where each node is given the weight equal to to the slope of the piecewise linear function at the corresponding edge. If we choose any edge $e$ of the cycle $G$, the $\mathbb G_m$ above can be see as the choice of an isomorphism, i.e. gluing data, of the fibers of the pullback of the line bundle to the partial normalization at the node corresponding to $e$. Additional details, including a formula for these line bundles, may be found in~\cite[Section~2]{RSW17A}.  

\subsection{Proof of Theorem~\ref{prop:lhntc}} Following Section~\ref{sec: tropical-evaluations}, we have a morphism
\[
\mathsf{DR}^{\mathsf{log}}(\mathbf x,k)\to \mathsf{Ex},
\]
and the class of the branch polynomial determines a codimension $n-3+2g$ stratum $S$ in $\mathsf{Ex}$. We examine the usual stack theoretic fiber diagram below:
\[
\begin{tikzcd}
\mathsf{DR}_S\arrow{d}\arrow{r}\ar[rd,phantom,"\square"] & \mathsf{DR}^{\mathsf{log}}(\mathbf x,k)\arrow{d}\\
S\arrow{r}&\mathsf{Ex}.
\end{tikzcd}
\]
By definition, the intersection number we are interested in is obtained by pulling back the virtual class along the regular embedding of $S$ into $\mathsf{Ex}$, and then taking degree. 

We first show the intersection number splits as a sum of strata. The intersection class above may be realized as the pullback
\[
\mathsf{DR}^{\mathsf{log}}(\mathbf x,k)\to \cM_{g,n}^\sfbu(\mathbf x,k)\to \mathsf{Ex}. 
\]
The pullback of $S$ along the second map is a sum of codimension $g$ strata: precisely those corresponding to the cones of $\mathsf{DR}^{\trop}(\mathbf x,k)$ where the target graph has the maximal possible number of bounded edges, as in the preceding tropical discussion. The intersection therefore splits as
\[
\deg\left( [\mathsf{DR}^\lightening(\mathbf x,k)] \cdot  br_g(\mathbf x, k)\right) = \sum_{[\Gamma\to T]} m_{[\Gamma\to T]},
\]
where $[\Gamma\to T]$ ranges over the combinatorial types meeting the tropical branch condition. 

Each summand in the expression above is virtually $0$-dimensional. When $k$ is $0$, the intersection is scheme theoretically $0$-dimensional, and more specifically, is supported on $0$-dimensional strata determined by $[\Gamma\to T]$. When $k$ is nonzero, the components of the intersection still split into components, but some of them may be positive dimensional. These correspond precisely to covers $[\Gamma\to T]$ where there are genus $1$ leaves attached to $\Gamma$. The corresponding summand is obtained by taking the degree of the virtual class. 

We now use the degeneration/splitting formula to argue that this multiplicity is exactly the one calculated by the tropical weights. 

Fix a type $\Gamma\to T$ and let $\mathsf{DR}(\Gamma\to T)$ be the associated stratum. The graph $\Gamma$ has trivalent and bivalent vertices, and when $k$ is nonzero, possibly univalent vertices (i.e. leaves) of genus $1$. We temporarily ignore the bivalent vertices, and later explain why they do not affect the calculation. At each trivalent vertex $v$ of $\Gamma$, the genus is necessarily $0$, and the difference between the incoming and outgoing slopes is $k$, which is precisely the degree of the logarithmic canonical bundle of $C_v$; since the curve has genus $0$ there is a unique section of $\omega_{C_v}^{\mathsf{log}}$ with the zeroes and poles as specified by the slopes at $v$. By recording these, there is a cutting morphism
\[
\kappa: \mathsf{DR}(\Gamma\to T) \to \prod_{v\in \Gamma} \mathsf{DR}(v).
\]
By the same argument as above, there is a unique pluricanonical section with the given vanishing up to scaling, the right hand side is a point. The multiplicity $m_{[\Gamma\to T]}$ may now be computed by the multiplicities in the degeneration formula in logarithmic Gromov--Witten theory~\cite[Section~6]{R19}. It remains only to explain why and how the formula applies, and for this we use the geometry of expansions. 

First, we show that the map $\kappa$ is finite and surjective, or equivalently that $\mathsf{DR}(\Gamma\to T)$ is nonempty of dimension $0$. Since $\Gamma$ is trivalent, there is a unique curve $C$ with this dual graph. Let $\Theta$ denote the cone associated to the type of $\Gamma\to T$. The moduli space of curves with tropicalization given by $\Theta$ is exactly $\mathbb G_m^g$. To see this, choose a spanning tree, and then using the discussion  of Section~\ref{sec: curves-with-tropicalization} to furnish the moduli at the nodes associated to the remaining edges. We denote this torus by $T_\Theta$.

Choose any point of $T_\Theta$. We now examine the expanded projective bundle
\[
\mathbb P(\Pi_t)\to C.
\] 
As we have already explained, at each vertex of $\Gamma$ there is a unique choice of pluricanonical section with vanishing specified by $\Gamma\to T$. These provide maps from each component $C_v$ of $C$ to the surface $\mathbb P(\Pi_t)$. By definition of $\mathbb P(\Pi_t)$, each vertex $v$ of $\Gamma$ determines an irreducible component and the germ of an edge at $v$ determines a divisor of this component. The germ also determines a marked point $p_e$ on $C_v$, which maps to this divisor. The logarithmic interior of this divisor is in turn a $\mathbb G_m$, and the image of $p_e$ under this section is an element\footnote{In fact, it can be described very concretely as the leading coefficient of the leading term of the Laurent expansion of the section at $p_e$; it is essentially a generalized residue.} of this $\mathbb G_m$.

In this manner, if we choose pluricanonical sections at each vertex, each edge determines two points on this $\mathbb G_m$. In order for these local maps at $C_v$ to glue to a section of the expanded bundle above, these evaluations at edges must give the same element of $\mathbb G_m$. There is no guarantee that these glue to give a global map, and indeed typically, they will not glue. 

If vertices $v$ and $v'$ are connected by an edge, then by choosing the section on $C_v$ arbitrarily, we may scale the section at $C_{v'}$ to ensure the gluing condition. However, if the genus of $\Gamma$ is positive, there may be two edges between $v$ and $v'$ and gluing becomes overdetermined. More generally, the phenomenon arises when there are multiple paths between two vertices. It is here that we will use the $\mathbb G_m^g$ worth of moduli. By the discussion following Lemma~\ref{sec: log-moduli-fixed-tropicalization}, by adjusting the point in the moduli $T_\Theta$, we scale the gluing data at the line bundle at the node. Choose a spanning tree and identify $T_\Theta$ with the scalars associated to each edge in the complement. There is a unique choice of scalar for each edge that guarantees that the gluing condition at each of these edges. We are left with a section:
\[
C\to \mathbb P(\Pi_t).
\]
The section is transverse to the strata, and therefore the resulting composite $C\to \mathbb P$ is a logarithmic map, and a point of $\mathsf{DR}(\Gamma\to T)$. 

The argument at univalent vertices is similar. However, since these are attached to the rest of the graph by a single edge, the moduli space of logarithmic enhancements of the underlying curve is unaffected by the presence of such a vertex. Note that two such genus $1$ vertices cannot lie over the same vertex in $T$. The moduli space of pluricanonical sections here is $1$-dimensional and isomorphic to $\Mbar_{1,1}$. The degree of the $k^{th}$ logarithmic pluricanonical bundle is $k$, and since the canonical on an elliptic curve is trivial, every point in $\Mbar_{1,1}$ satisfies the condition that the $k$-fold twist of the trivial bundle by the marking is the $k$-fold twist of the pluricanonical by the marking. The moduli space is smooth; the obstruction bundle is given by the normal bundle to the embedding of $\Mbar_{1,1}$ into the universal degree $1$ Picard scheme. The normal bundle is easily seen to be the line bundle with fiber over $[C]$ given by $H^1(C,\mathcal O_C)$. This is the dual of the Hodge bundle. See also~\cite[Section~5]{MW17}.The contribution for each genus $1$ univalent vertex is $\int_{\Mbar_{1,1}}(-\lambda_1) = \frac{-1}{24}$. 

Two steps remain to conclude: (i) we must count the number of logarithmic lifts and (ii) explain how to deal with $2$-valent vertices. The number of lifts is calculated by applying~\cite[Theorem~5.3.3]{ACGS15} and after accounting for automorphisms, gives precisely the claimed formula. Finally, concerning $2$-valent vertices, we note that if $\Gamma^+$ contains a $2$-valent vertex with slope $d$, the moduli space has a $\mathbb Z/d$-stabilizer at this point. However, the creation of a $2$-valent vertex also creates a new bounded edge of weight equal to $d$. The additional weight $d$ provides a factor of $d$ in the formula of~\cite[Theorem~5.3.3]{ACGS15}, which is cancelled by the automorphism factor. The result is precisely the claimed multiplicity, and the theorem follows. 

\qed

\section{Properties of leaky numbers}
\subsection{Piecewise polynomiality of the counts}
The aim of this section is to prove the following theorem:
\begin{theorem}\label{thm-Ngxpiecewisepoly}
The map ${\mathbf{x}} \mapsto \leakyn$ is piecewise polynomial.

More precisely, the set of definition of the map, i.e.\ $$\big\{\mathbf{x} \in \ZZ^n \;|\; \sum_{i=1}^n x_i= k\cdot (2g-2+n) , \;\mathbf{x}^+>0, \;\mathbf{x}^-<0\big\}$$ is subdivided by walls with equation 

$$\big\{\sum_{i\in I}x_i =  k\cdot(2g_I -2+|I|)\big\},$$
where $I$ is a subset of $\{1,\ldots,n\}$ and $g_I\in \ZZ$ satisfies $0 \leq g_I\leq g$. In each region, $\leakyn$ equals a polynomial of degree $4g-3+n$ in $\mathbf{x}$.

\end{theorem}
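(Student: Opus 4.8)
The plan is to reduce the statement to the piecewise polynomiality of a count of lattice points in a family of polytopes whose facets vary linearly with $\mathbf x$, following the strategy of~\cite{GJV} and the tropical reformulation in~\cite{CJM1}. By Theorem~\ref{thm: leaky-formula}, $\leakyn = \sum_\Gamma \frac{1}{|\Aut(\Gamma)|}\prod_e \omega(e)$, where the sum runs over $k$-leaky graphs of type $(g,\mathbf x)$. The first step is to organize this sum by \emph{ordered combinatorial type}: fix the underlying trivalent directed graph $\Gamma$ with a total order on its vertices refining the edge directions (there are finitely many such data, independent of $\mathbf x$ once the degree vector has fixed signs), and express the contribution of that type as a sum over admissible edge-weight assignments $\omega$. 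The leaky balancing condition at each vertex, together with the prescribed end-weights $x_i$, is a system of \emph{inhomogeneous} linear equations in the $\omega(e)$, with the inhomogeneity (the $k(2g(v)-2+\val(v))$ terms and the boundary values $x_i$) depending affinely on $\mathbf x$. Solving these on a spanning tree expresses all edge weights as affine-linear functions of $\mathbf x$ and of $g-1$ free ``loop'' parameters $\mathbf s=(s_1,\dots,s_g)$; the positivity constraints $\omega(e)>0$ then cut out a polytope $P_\Gamma(\mathbf x)\subset \RR^g$ whose facet inequalities are affine in $(\mathbf x,\mathbf s)$.

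The second step is the polynomiality input: the contribution of a fixed ordered type is $\sum_{\mathbf s\in P_\Gamma(\mathbf x)\cap\ZZ^g}\prod_e \omega_e(\mathbf x,\mathbf s)$, a sum of a fixed polynomial over the lattice points of a variable polytope. By the standard theory (Ehrhart-type reciprocity / the fact that a weighted lattice-point count of a polytope with linearly-moving rational facets is piecewise polynomial in the facet parameters, cf.\ Brion--Vergne or the elementary argument in~\cite[\S2]{GJV}), this is piecewise polynomial in $\mathbf x$, with polynomial pieces on the chambers of the arrangement of hyperplanes across which the combinatorial type of $P_\Gamma(\mathbf x)$ jumps. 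One then identifies these walls: $P_\Gamma(\mathbf x)$ changes combinatorial type precisely when some subsum of edge weights is forced to vanish, which — unwinding the balancing relations on a subgraph cut out by removing an edge or separating along a vertex subset — is exactly an equation of the form $\sum_{i\in I}x_i = k(2g_I-2+|I|)$ for a subset $I\subseteq\{1,\dots,n\}$ and an integer $0\le g_I\le g$ (the genus of the relevant subgraph). Summing the finitely many ordered-type contributions, the common refinement of all these wall arrangements gives the claimed subdivision, and on each open chamber $\leakyn$ is a polynomial.

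The third step is the degree count. For a fixed ordered type, $P_\Gamma(\mathbf x)$ has dimension $g$ (the number of independent loop parameters), so its weighted lattice-point count contributes degree $\dim P_\Gamma + \deg\left(\prod_e \omega_e\right)$ in the combined variables; since $\Gamma$ trivalent of genus $g$ with $n$ ends has $E = 3g-3+2n$ edges — of which $n$ are the prescribed ends, leaving $3g-3+n$ bounded edges — the product $\prod_e\omega(e)$ over bounded edges has degree $3g-3+n$ as a polynomial in $(\mathbf x,\mathbf s)$, and the $g$-dimensional lattice-point sum raises the degree in $\mathbf x$ by at most $g$. This gives total degree $g + (3g-3+n) = 4g-3+n$, and one checks the leading term does not cancel across types, so the degree is exactly $4g-3+n$.

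The main obstacle I expect is the careful bookkeeping in the wall analysis: one must show that \emph{every} genuine jump in the polynomial formula occurs only along walls of the stated form (and not at some finer arrangement), which requires arguing that when $P_\Gamma(\mathbf x)$ degenerates, the degeneration is always governed by a subgraph whose end-labels form a set $I$ and whose genus is some $g_I\le g$ — in other words that the resonance hyperplanes of the edge-weight arrangement are exactly the $\sum_{i\in I}x_i = k(2g_I-2+|I|)$. This is the $k$-leaky analogue of the chamber structure in~\cite{GJV,CJM1}, and the proof should parallel theirs, with the constant $0$ on the right-hand side of the balancing replaced by $k(2g_I-2+|I|)$; handling the vertex genera and the extra inhomogeneity $k(2g(v)-2+\val(v))$ is the one place where the leaky case genuinely differs and needs to be done with care.
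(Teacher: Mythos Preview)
Your proposal is correct and follows essentially the same route as the paper: organize the leaky-graph sum by combinatorial type, express edge weights as affine-linear functions of $\mathbf x$ and $g$ free cycle parameters, reduce to a weighted lattice-point sum over a polytope with linearly moving facets, and invoke Bernoulli/Ehrhart-type polynomiality together with the wall analysis from~\cite{CJM2}. The paper differs only cosmetically, grouping by the underlying (unoriented, unordered) $\mathbf x$-graph and packaging the orientations and vertex orderings into a chamber decomposition with signs $\sign(A)$ and multiplicities $m(A)$, and treating $g=0$ separately; note the slip in your text where you write ``$g-1$ free loop parameters $\mathbf s=(s_1,\dots,s_g)$'' --- it is $g$ parameters, as you correctly use later.
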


The walls correspond to cut $k$-leaky graphs with two components, where the ends $x_i$ for $i\in I$ appear in one component of genus $g_I$.

The proof follows the methods established in \cite{CJM2} for the case of double Hurwitz numbers and in \cite{AB14} for the case of double Hirzebruch numbers.

The arguments differ for the cases $g=0$ and $g>0$. We start by assuming that $g>0$.

 Put roughly, we are counting tropical leaky covers with variables for the weights of ends. As we will see below, each such cover contributes a polynomial count, where the degree of the polynomial equals the number of bounded edges plus the first Betti number of the graph. Consequently, graphs that have genus $1$ vertices contribute lower degree summands, and we can therefore ignore them in the following study. We restrict ourselves to considering only trivalent leaky tropical covers.

\begin{definition}\label{def-xgraph}
Given $g$ and $\mathbf{x}$, an \emph{ $\mathbf{x}$-graph} $\Gamma$  is a connected, genus $g$, trivalent graph with $n$ ends labeled $x_1, \ldots, x_n$.
\end{definition}

\begin{figure}
\begin{center}

\tikzset{every picture/.style={line width=0.75pt}} 

\begin{tikzpicture}[x=0.75pt,y=0.75pt,yscale=-1,xscale=1]

\draw    (60.5,71.25) -- (99.5,89.25) ;
\draw    (99.5,89.25) -- (59.5,110.25) ;
\draw    (99.5,89.25) -- (139,89.75) ;
\draw    (139,89.75) -- (170.5,70.25) ;
\draw    (139,89.75) -- (169.5,110.25) ;
\draw    (170.5,70.25) -- (169.5,110.25) ;
\draw    (170.5,70.25) -- (209.5,69.75) ;
\draw    (169.5,110.25) -- (209.5,110.25) ;
\draw    (209.5,69.75) -- (209.5,110.25) ;
\draw    (209.5,69.75) -- (249.5,61.25) ;
\draw    (209.5,110.25) -- (249,118.75) ;

\draw (61,52.5) node [anchor=north west][inner sep=0.75pt]   [align=left] {$\displaystyle x_{1}$};
\draw (66,108.5) node [anchor=north west][inner sep=0.75pt]   [align=left] {$\displaystyle x_{2}$};
\draw (250,54) node [anchor=north west][inner sep=0.75pt]   [align=left] {$\displaystyle x_{3}$};
\draw (250,114) node [anchor=north west][inner sep=0.75pt]   [align=left] {$\displaystyle x_{4}$};

\end{tikzpicture}

\end{center}

\caption{An $\mathbf{x}$-graph of genus $2$ with four ends.}\label{fig-xgraph}

\end{figure}
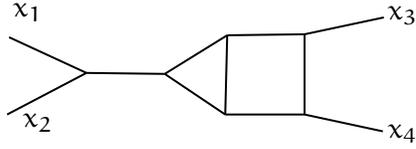

An $\mathbf{x}$-graph can be obtained from a $k$-leaky graph by just forgetting structure, more precisely, the vertex ordering, the orientation and the expansion factors for the bounded edges.

Vice versa, all leaky graphs that lead to the same $\mathbf{x}$-graph after forgetting can be parametrized by the lattice points in the bounded cells of a hyperplane arrangement. This follows from Corollary 2.13 in \cite{CJM2}, we outline the arguments here by following them along an example. We set $k=1$ throughout.

\begin{example}

Consider the  $\mathbf{x}$-graph from Figure \ref{fig-xgraph}. Pick a reference orientation for the bounded edges. Pick two edges $e_1,e_2$ such that $\Gamma\setminus\{e_1,e_2\}$ is a tree. Let the variables $i,j$ stand for the expansion factors of these two edges. The leaky condition then implies expansion factors  for the remaining edges (see Figure \ref{fig-reforientation}). \rcolor{All expansion factors are homogenous linear expressions in the  components of $\mathbf{x}$, $i$ and $j$.}

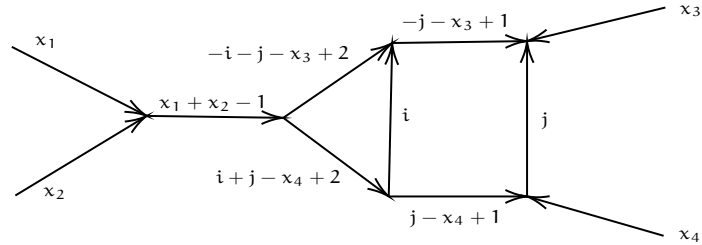
\begin{figure}
\begin{center}

\tikzset{every picture/.style={line width=0.75pt}} 

\begin{tikzpicture}[x=0.75pt,y=0.75pt,yscale=-1,xscale=1]

\draw    (60.87,91.15) -- (127.1,125.09) ;
\draw [shift={(128.88,126)}, rotate = 207.13] [color={rgb, 255:red, 0; green, 0; blue, 0 }  ][line width=0.75]    (10.93,-3.29) .. controls (6.95,-1.4) and (3.31,-0.3) .. (0,0) .. controls (3.31,0.3) and (6.95,1.4) .. (10.93,3.29)   ;
\draw    (62.62,166.18) -- (127.17,127.04) ;
\draw [shift={(128.88,126)}, rotate = 508.77] [color={rgb, 255:red, 0; green, 0; blue, 0 }  ][line width=0.75]    (10.93,-3.29) .. controls (6.95,-1.4) and (3.31,-0.3) .. (0,0) .. controls (3.31,0.3) and (6.95,1.4) .. (10.93,3.29)   ;
\draw    (128.88,126) -- (195.76,126.94) ;
\draw [shift={(197.76,126.97)}, rotate = 180.81] [color={rgb, 255:red, 0; green, 0; blue, 0 }  ][line width=0.75]    (10.93,-3.29) .. controls (6.95,-1.4) and (3.31,-0.3) .. (0,0) .. controls (3.31,0.3) and (6.95,1.4) .. (10.93,3.29)   ;
\draw    (197.76,126.97) -- (251.04,90.34) ;
\draw [shift={(252.69,89.21)}, rotate = 505.5] [color={rgb, 255:red, 0; green, 0; blue, 0 }  ][line width=0.75]    (10.93,-3.29) .. controls (6.95,-1.4) and (3.31,-0.3) .. (0,0) .. controls (3.31,0.3) and (6.95,1.4) .. (10.93,3.29)   ;
\draw    (197.76,126.97) -- (249.34,165.47) ;
\draw [shift={(250.94,166.66)}, rotate = 216.74] [color={rgb, 255:red, 0; green, 0; blue, 0 }  ][line width=0.75]    (10.93,-3.29) .. controls (6.95,-1.4) and (3.31,-0.3) .. (0,0) .. controls (3.31,0.3) and (6.95,1.4) .. (10.93,3.29)   ;
\draw    (250.94,166.66) -- (252.64,91.21) ;
\draw [shift={(252.69,89.21)}, rotate = 451.29] [color={rgb, 255:red, 0; green, 0; blue, 0 }  ][line width=0.75]    (10.93,-3.29) .. controls (6.95,-1.4) and (3.31,-0.3) .. (0,0) .. controls (3.31,0.3) and (6.95,1.4) .. (10.93,3.29)   ;
\draw    (252.69,89.21) -- (318.7,88.27) ;
\draw [shift={(320.7,88.24)}, rotate = 539.1800000000001] [color={rgb, 255:red, 0; green, 0; blue, 0 }  ][line width=0.75]    (10.93,-3.29) .. controls (6.95,-1.4) and (3.31,-0.3) .. (0,0) .. controls (3.31,0.3) and (6.95,1.4) .. (10.93,3.29)   ;
\draw    (250.94,166.66) -- (318.7,166.66) ;
\draw [shift={(320.7,166.66)}, rotate = 180] [color={rgb, 255:red, 0; green, 0; blue, 0 }  ][line width=0.75]    (10.93,-3.29) .. controls (6.95,-1.4) and (3.31,-0.3) .. (0,0) .. controls (3.31,0.3) and (6.95,1.4) .. (10.93,3.29)   ;
\draw    (320.7,166.66) -- (320.7,90.24) ;
\draw [shift={(320.7,88.24)}, rotate = 450] [color={rgb, 255:red, 0; green, 0; blue, 0 }  ][line width=0.75]    (10.93,-3.29) .. controls (6.95,-1.4) and (3.31,-0.3) .. (0,0) .. controls (3.31,0.3) and (6.95,1.4) .. (10.93,3.29)   ;
\draw    (390.45,71.3) -- (322.64,87.77) ;
\draw [shift={(320.7,88.24)}, rotate = 346.35] [color={rgb, 255:red, 0; green, 0; blue, 0 }  ][line width=0.75]    (10.93,-3.29) .. controls (6.95,-1.4) and (3.31,-0.3) .. (0,0) .. controls (3.31,0.3) and (6.95,1.4) .. (10.93,3.29)   ;
\draw    (389.57,186.51) -- (322.62,167.22) ;
\draw [shift={(320.7,166.66)}, rotate = 376.07] [color={rgb, 255:red, 0; green, 0; blue, 0 }  ][line width=0.75]    (10.93,-3.29) .. controls (6.95,-1.4) and (3.31,-0.3) .. (0,0) .. controls (3.31,0.3) and (6.95,1.4) .. (10.93,3.29)   ;

\draw (70.96,84.62) node [anchor=north west][inner sep=0.75pt]  [font=\tiny] [align=left] {$\displaystyle x_{1}$};
\draw (75.18,160.55) node [anchor=north west][inner sep=0.75pt]  [font=\tiny] [align=left] {$\displaystyle x_{2}$};
\draw (396.04,68.52) node [anchor=north west][inner sep=0.75pt]  [font=\tiny] [align=left] {$\displaystyle x_{3}$};
\draw (395.54,182.2) node [anchor=north west][inner sep=0.75pt]  [font=\tiny] [align=left] {$\displaystyle x_{4}$};
\draw (255.91,120.61) node [anchor=north west][inner sep=0.75pt]  [font=\tiny] [align=left] {$\displaystyle i$};
\draw (326.39,122.68) node [anchor=north west][inner sep=0.75pt]  [font=\tiny] [align=left] {$\displaystyle j$};
\draw (133.54,114.09) node [anchor=north west][inner sep=0.75pt]  [font=\tiny] [align=left] {$\displaystyle x_{1} +x_{2} -1$};
\draw (162.02,152) node [anchor=north west][inner sep=0.75pt]  [font=\tiny] [align=left] {$\displaystyle i+j-x_{4} +2$};
\draw (158.04,89.35) node [anchor=north west][inner sep=0.75pt]  [font=\tiny] [align=left] {$\displaystyle -i-j-x_{3} +2$};
\draw (256.06,73.15) node [anchor=north west][inner sep=0.75pt]  [font=\tiny] [align=left] {$\displaystyle -j-x_{3} +1$};
\draw (259.84,171.85) node [anchor=north west][inner sep=0.75pt]  [font=\tiny] [align=left] {$\displaystyle j-x_{4} +1$};

\end{tikzpicture}

\end{center}
\caption{A reference orientation for the $\mathbf{x}$-graph of Figure \ref{fig-xgraph}, and its corresponding expansion factors.}
\label{fig-reforientation}
\end{figure}

In order to obtain a leaky graph, the expansion factors must be positive. We can reach this by reversing the orientation of negative edges. The hyperplane arrangement whose (non trivial) bounded chambers correspond to possible types of leaky graphs is thus given by setting the \rcolor{linear expressions for the} expansion factors \rcolor{to} zero. We depict this hyperplane arrangement in Figure \ref{fig-hyperplane}.

\begin{figure}
\begin{center}

\tikzset{every picture/.style={line width=0.75pt}} 

\begin{tikzpicture}[x=0.75pt,y=0.75pt,yscale=-1,xscale=1]

\draw    (162.27,190.8) -- (307.8,190.6) ;
\draw [shift={(309.8,190.6)}, rotate = 539.9200000000001] [color={rgb, 255:red, 0; green, 0; blue, 0 }  ][line width=0.75]    (10.93,-3.29) .. controls (6.95,-1.4) and (3.31,-0.3) .. (0,0) .. controls (3.31,0.3) and (6.95,1.4) .. (10.93,3.29)   ;
\draw    (190.2,218.6) -- (189.81,93) ;
\draw [shift={(189.8,91)}, rotate = 449.82] [color={rgb, 255:red, 0; green, 0; blue, 0 }  ][line width=0.75]    (10.93,-3.29) .. controls (6.95,-1.4) and (3.31,-0.3) .. (0,0) .. controls (3.31,0.3) and (6.95,1.4) .. (10.93,3.29)   ;
\draw  [dash pattern={on 4.5pt off 4.5pt}]  (104.47,190.6) -- (338.47,191) ;
\draw  [dash pattern={on 4.5pt off 4.5pt}]  (108.28,220.31) -- (342.28,220.71) ;
\draw  [dash pattern={on 4.5pt off 4.5pt}]  (102.87,149.8) -- (215.5,149.99) -- (336.87,150.2) ;
\draw  [dash pattern={on 4.5pt off 4.5pt}]  (189.8,239.8) -- (189.4,61.8) ;
\draw  [dash pattern={on 4.5pt off 4.5pt}]  (210.17,249.8) -- (93.51,133.23) ;
\draw  [dash pattern={on 4.5pt off 4.5pt}]  (296.33,247.13) -- (143.13,92.07) ;
\draw  [draw opacity=0][fill={rgb, 255:red, 155; green, 155; blue, 155 }  ,fill opacity=1 ] (190.58,140.69) -- (199.42,149.69) -- (190.58,149.69) -- cycle ;
\draw    (100.4,322.6) -- (119.6,314.2) ;
\draw    (100.4,322.6) -- (120,333.4) ;
\draw    (120,333.4) -- (119.6,314.2) ;
\draw    (119.6,314.2) -- (139.6,314.2) ;
\draw    (120,333.4) -- (139.6,333) ;
\draw    (139.6,314.2) -- (139.6,333) ;
\draw    (182.8,322.2) -- (202,313.8) ;
\draw    (182.8,322.2) -- (202.4,333) ;
\draw    (202.4,333) -- (202,313.8) ;
\draw    (202,313.8) -- (222,313.8) ;
\draw    (202.4,333) -- (222,332.6) ;
\draw    (222,313.8) -- (222,332.6) ;
\draw    (269.6,322.6) -- (288.8,314.2) ;
\draw    (269.6,322.6) -- (289.2,333.4) ;
\draw    (289.2,333.4) -- (288.8,314.2) ;
\draw    (288.8,314.2) -- (308.8,314.2) ;
\draw    (289.2,333.4) -- (308.8,333) ;
\draw    (308.8,314.2) -- (308.8,333) ;
\draw    (356,323) -- (375.2,314.6) ;
\draw    (356,323) -- (375.6,333.8) ;
\draw    (375.6,333.8) -- (375.2,314.6) ;
\draw    (375.2,314.6) -- (395.2,314.6) ;
\draw    (375.6,333.8) -- (395.2,333.4) ;
\draw    (395.2,314.6) -- (395.2,333.4) ;
\draw  [color={rgb, 255:red, 0; green, 0; blue, 0 }  ][line width=0.75] [line join = round][line cap = round] (117.6,322.4) .. controls (118.4,323.47) and (118.68,325.82) .. (120,325.6) .. controls (121.53,325.34) and (121.6,322.93) .. (122.4,321.6) ;
\draw  [color={rgb, 255:red, 0; green, 0; blue, 0 }  ][line width=0.75] [line join = round][line cap = round] (136.4,326) .. controls (137.73,324.53) and (138.42,321.6) .. (140.4,321.6) .. controls (142.07,321.6) and (142,324.53) .. (142.8,326) ;
\draw  [color={rgb, 255:red, 0; green, 0; blue, 0 }  ][line width=0.75] [line join = round][line cap = round] (218.4,325.2) .. controls (219.47,323.47) and (219.56,320) .. (221.6,320) .. controls (223.87,320) and (224.53,323.47) .. (226,325.2) ;
\draw  [color={rgb, 255:red, 0; green, 0; blue, 0 }  ][line width=0.75] [line join = round][line cap = round] (209.86,329.51) .. controls (211.57,330.47) and (215.09,330.41) .. (215,332.37) .. controls (214.9,334.56) and (211.38,334.85) .. (209.57,336.09) ;
\draw  [color={rgb, 255:red, 0; green, 0; blue, 0 }  ][line width=0.75] [line join = round][line cap = round] (127.57,330.09) .. controls (129.31,330.67) and (133.51,331.18) .. (132.43,332.66) .. controls (131.37,334.11) and (129.38,334.56) .. (127.86,335.51) ;
\draw  [color={rgb, 255:red, 0; green, 0; blue, 0 }  ][line width=0.75] [line join = round][line cap = round] (127.86,310.94) .. controls (129.1,312.18) and (131.65,312.91) .. (131.57,314.66) .. controls (131.5,316.29) and (128.9,316.56) .. (127.57,317.51) ;
\draw  [color={rgb, 255:red, 0; green, 0; blue, 0 }  ][line width=0.75] [line join = round][line cap = round] (295.86,310.94) .. controls (298.14,312.18) and (300.75,312.95) .. (302.71,314.66) .. controls (303.36,315.22) and (300.97,314.43) .. (300.14,314.66) .. controls (298.45,315.11) and (296.9,315.99) .. (295.29,316.66) ;
\draw  [color={rgb, 255:red, 0; green, 0; blue, 0 }  ][line width=0.75] [line join = round][line cap = round] (383.29,310.94) .. controls (385.1,312.09) and (388.41,312.25) .. (388.71,314.37) .. controls (388.96,316.1) and (385.67,316.09) .. (384.14,316.94) ;
\draw  [color={rgb, 255:red, 0; green, 0; blue, 0 }  ][line width=0.75] [line join = round][line cap = round] (199.86,324.66) .. controls (200.81,323.23) and (201.03,320.69) .. (202.71,320.37) .. controls (204.17,320.09) and (204.81,322.47) .. (205.86,323.51) ;
\draw  [color={rgb, 255:red, 0; green, 0; blue, 0 }  ][line width=0.75] [line join = round][line cap = round] (371.86,324.66) .. controls (373,323.23) and (373.46,320.3) .. (375.29,320.37) .. controls (377.31,320.45) and (377.95,323.42) .. (379.29,324.94) ;
\draw  [color={rgb, 255:red, 0; green, 0; blue, 0 }  ][line width=0.75] [line join = round][line cap = round] (280.14,324.37) .. controls (280.9,326.18) and (283.53,328.18) .. (282.43,329.8) .. controls (281.18,331.63) and (278.05,330.37) .. (275.86,330.66) ;
\draw  [color={rgb, 255:red, 0; green, 0; blue, 0 }  ][line width=0.75] [line join = round][line cap = round] (193.57,323.23) .. controls (194.24,325.23) and (196.63,327.41) .. (195.57,329.23) .. controls (194.55,330.97) and (191.57,329.8) .. (189.57,330.09) ;
\draw  [color={rgb, 255:red, 0; green, 0; blue, 0 }  ][line width=0.75] [line join = round][line cap = round] (109.29,324.37) .. controls (110.52,325.9) and (114.29,327.47) .. (113,328.94) .. controls (111.31,330.88) and (107.86,328.94) .. (105.29,328.94) ;
\draw  [color={rgb, 255:red, 0; green, 0; blue, 0 }  ][line width=0.75] [line join = round][line cap = round] (109,316.66) .. controls (110.52,316.85) and (112.64,316.01) .. (113.57,317.23) .. controls (114.3,318.17) and (112.43,319.32) .. (111.86,320.37) ;
\draw  [color={rgb, 255:red, 0; green, 0; blue, 0 }  ][line width=0.75] [line join = round][line cap = round] (190.43,316.37) .. controls (191.67,316.85) and (193.58,316.6) .. (194.14,317.8) .. controls (194.67,318.92) and (193.19,320.09) .. (192.71,321.23) ;
\draw  [color={rgb, 255:red, 0; green, 0; blue, 0 }  ][line width=0.75] [line join = round][line cap = round] (276.14,316.09) .. controls (278.14,316.56) and (281.02,315.79) .. (282.14,317.51) .. controls (283.03,318.87) and (280.62,320.37) .. (279.86,321.8) ;
\draw  [color={rgb, 255:red, 0; green, 0; blue, 0 }  ][line width=0.75] [line join = round][line cap = round] (361.57,316.66) .. controls (364.24,316.85) and (367.89,315.15) .. (369.57,317.23) .. controls (370.89,318.85) and (367.48,320.85) .. (366.43,322.66) ;
\draw  [draw opacity=0][fill={rgb, 255:red, 155; green, 155; blue, 155 }  ,fill opacity=1 ] (189.57,220.8) -- (189,229.09) -- (181.29,221.09) -- (181.29,221.09) -- (181.29,221.09) -- cycle ;
\draw  [color={rgb, 255:red, 0; green, 0; blue, 0 }  ][line width=0.75] [line join = round][line cap = round] (392.17,321.8) .. controls (392.94,323.69) and (392.46,327.36) .. (394.5,327.47) .. controls (396.72,327.58) and (397.17,323.91) .. (398.5,322.13) ;
\draw  [color={rgb, 255:red, 0; green, 0; blue, 0 }  ][line width=0.75] [line join = round][line cap = round] (297,329.8) .. controls (299,331) and (303.16,331.07) .. (303,333.4) .. controls (302.85,335.67) and (298.73,335) .. (296.6,335.8) ;
\draw  [color={rgb, 255:red, 0; green, 0; blue, 0 }  ][line width=0.75] [line join = round][line cap = round] (383.8,330.6) .. controls (385.93,331.53) and (388.15,332.3) .. (390.2,333.4) .. controls (391.99,334.36) and (385,334.56) .. (385,336.6) ;
\draw  [color={rgb, 255:red, 0; green, 0; blue, 0 }  ][line width=0.75] [line join = round][line cap = round] (366.33,325.13) .. controls (367.11,326.91) and (369.99,329.05) .. (368.67,330.47) .. controls (367.07,332.17) and (364,330.24) .. (361.67,330.13) ;
\draw  [color={rgb, 255:red, 0; green, 0; blue, 0 }  ][line width=0.75] [line join = round][line cap = round] (210.17,310.3) .. controls (212.17,311.52) and (216.03,311.63) .. (216.17,313.97) .. controls (216.28,315.97) and (212.39,315.3) .. (210.5,315.97) ;
\draw  [color={rgb, 255:red, 0; green, 0; blue, 0 }  ][line width=0.75] [line join = round][line cap = round] (305.83,321.8) .. controls (307.11,323.26) and (306.36,326.41) .. (308.17,327.13) .. controls (310.06,327.89) and (310.41,323.69) .. (311.17,321.8) ;
\draw  [color={rgb, 255:red, 0; green, 0; blue, 0 }  ][line width=0.75] [line join = round][line cap = round] (287.17,322.13) .. controls (287.94,323.13) and (288.25,325.31) .. (289.5,325.13) .. controls (290.93,324.93) and (291.06,322.69) .. (291.83,321.47) ;

\draw (347.27,186.33) node [anchor=north west][inner sep=0.75pt]  [font=\tiny] [align=left] {$\displaystyle j=0$};
\draw (348.65,216.09) node [anchor=north west][inner sep=0.75pt]  [font=\tiny] [align=left] {$\displaystyle j-x_{4} +1=0$};
\draw (344.27,146.07) node [anchor=north west][inner sep=0.75pt]  [font=\tiny] [align=left] {$\displaystyle -j-x_{3} +1=0$};
\draw (177.53,243.47) node [anchor=north west][inner sep=0.75pt]  [font=\tiny] [align=left] {$\displaystyle i=0$};
\draw (57.49,123.27) node [anchor=north west][inner sep=0.75pt]  [font=\tiny] [align=left] {$\displaystyle i+j-x_{4} +2=0$};
\draw (81.55,78.85) node [anchor=north west][inner sep=0.75pt]  [font=\tiny] [align=left] {$\displaystyle -i-j-x_{3} +2=0$};
\draw (158.27,164.67) node [anchor=north west][inner sep=0.75pt]   [align=left] {I};
\draw (197,166.4) node [anchor=north west][inner sep=0.75pt]   [align=left] {II};
\draw (288,286.8) node [anchor=north west][inner sep=0.75pt]   [align=left] {III};
\draw (374.4,287.2) node [anchor=north west][inner sep=0.75pt]   [align=left] {IV};
\draw (120.4,286.8) node [anchor=north west][inner sep=0.75pt]   [align=left] {I};
\draw (202.4,287.2) node [anchor=north west][inner sep=0.75pt]   [align=left] {II};
\draw (172,196.2) node [anchor=north west][inner sep=0.75pt]   [align=left] {III};
\draw (209.07,199.6) node [anchor=north west][inner sep=0.75pt]   [align=left] {IV};

\end{tikzpicture}

\end{center}

\caption{The hyperplane arrangement for the $\mathbf{x}$-graph of Figure \ref{fig-xgraph}.}
\label{fig-hyperplane}

\end{figure}
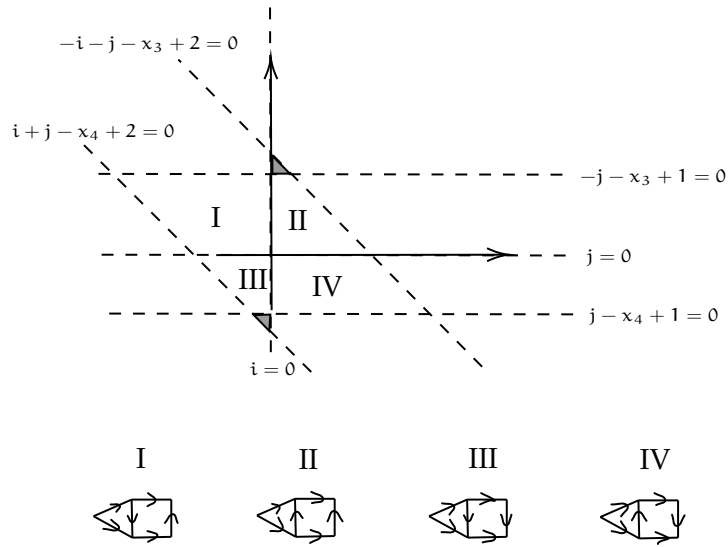
\end{example}

Notice that three hyperplanes that correspond to three edges adjacent to the same vertex enclose a small triangle times a linear space (where the linear space is trivial if $g=2$ for dimension reasons, as in Figure \ref{fig-hyperplane}). This is a consequence of the leaky condition. We call such bounded chambers trivial. They do not contain lattice points in the interior.

By Lemma 2.12 in \cite{CJM2}, the non trivial bounded chambers correspond to orientations without directed cycles. 

An interior lattice point in  a non trivial bounded chamber corrsesponds to a feasible orientation and the assignment of expansion factors for all edges. What is still missing to obtain a leaky graph is a choice of vertex ordering that is compatible with the edge orientations. Notice that the number of such vertex orderings is the same for each lattice point in a chamber $A$. We denote it by $m(A)$. This is true because the orientation of an edge is determined by the side of the corresponding hyperplane on which the lattice point of the leaky graph lies. Within a chamber, all edges are oriented in the same way, and thus the number of vertex orderings which is compatible with this orientation is constant.

We can deduce the following statement:
\begin{lemma}
Given an $\mathbf{x}$-graph $\Gamma$, the leaky graphs that yield $\Gamma$ after forgetting are in weighted bijection with the interior lattice points in the non trivial bounded chambers of the hyperplane arrangement given by the expansion factors in a $g$-dimensional vector space, where each lattice point in a chamber $A$ is weighted by $m(A)$.
\end{lemma}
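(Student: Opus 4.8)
The plan is to show that leaky graphs mapping to a fixed $\mathbf{x}$-graph $\Gamma$ decompose according to their underlying edge-orientation, and then to count within each orientation class. First I would set up the bijective correspondence precisely: recall that a leaky graph is the data of $\Gamma$ together with an orientation of the bounded edges, a choice of expansion factors $\omega(e)\in\NN$ on the bounded edges satisfying the leaky balancing condition at each trivalent vertex, and a vertex ordering compatible with that orientation (in the sense of Definition~\ref{def-mongraph}(4), after reversing out-ends). Fixing a spanning tree of $\Gamma$, i.e.\ a choice of $g$ edges $e_1,\dots,e_g$ whose removal leaves a tree, and introducing variables for their expansion factors, the leaky condition propagates to give every edge's expansion factor as an explicit affine-linear function of these $g$ variables and of the components of $\mathbf{x}$; this is the content of the worked example following Figure~\ref{fig-xgraph}, and the general statement is Corollary~2.13 in~\cite{CJM2}. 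Thus the pair (orientation, expansion factors) is recorded by a single lattice point in $\ZZ^g$ lying in the region cut out by demanding all the affine-linear expansion-factor expressions be positive — a hyperplane arrangement in a $g$-dimensional vector space.

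Next I would analyze which chambers contribute. A lattice point in the interior of a bounded chamber $A$ of this arrangement determines the sign of each expansion-factor expression, hence (after reversing the orientation of the edges where it is negative) a consistent orientation of $\Gamma$ together with positive integer expansion factors — exactly the data of a leaky graph \emph{minus} the vertex ordering. Unbounded chambers are discarded because the count of leaky graphs is finite (there are finitely many combinatorial types, as in Section~\ref{sec:leakymod}), so only bounded chambers can contain the relevant data; one checks the finitely many lattice points in bounded chambers exhaust all leaky graphs over $\Gamma$. The trivial chambers — the small simplices enclosed by the three hyperplanes corresponding to the three edges at a single trivalent vertex, as noted after Figure~\ref{fig-hyperplane}, a consequence of the leaky balancing relation $d_v^l-d_v^r=k(2g(v)-2+\val(v))$ at $v$ — contain no interior lattice points and so contribute nothing; equivalently, by Lemma~2.12 of~\cite{CJM2}, the non-trivial bounded chambers are precisely those whose orientation has no directed cycle, which is exactly the condition needed for a compatible vertex ordering to exist.

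Finally I would account for the vertex ordering. For a lattice point in a fixed non-trivial bounded chamber $A$, the induced orientation of $\Gamma$ is constant across $A$ (the orientation of $e$ depends only on which side of the hyperplane $\{\omega(e)=0\}$ the point lies), so the number of compatible vertex orderings — i.e.\ linear extensions of the partial order on vertices induced by the acyclic orientation — depends only on $A$ and not on the particular lattice point; call this number $m(A)$. Hence each interior lattice point of $A$ gives rise to exactly $m(A)$ leaky graphs over $\Gamma$, and conversely every leaky graph over $\Gamma$ arises this way from a unique lattice point in a unique non-trivial bounded chamber. This is precisely the asserted weighted bijection. The main obstacle is the bookkeeping in the first step: one must verify carefully that fixing the spanning-tree variables and the vector $\mathbf{x}$ determines \emph{all} expansion factors via the leaky condition (not merely some of them), that the resulting affine-linear expressions are the right ones, and that passing to a different spanning tree yields a linearly equivalent arrangement — but all of this is exactly the $k=1$ analogue of the double Hurwitz arguments in~\cite[Section~2]{CJM2}, and the modification is purely the replacement of the homogeneous balancing condition by the inhomogeneous leaky one, which shifts constants but not the combinatorial structure.
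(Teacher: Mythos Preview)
Your proposal is correct and follows essentially the same approach as the paper. In fact the paper does not give a separate proof of this lemma: it is stated as a summary (``We can deduce the following statement'') of the preceding discussion, which parallels your argument step by step --- choosing $g$ cycle-breaking edges, propagating expansion factors via the leaky condition (the example following Figure~\ref{fig-reforientation}), identifying non-trivial bounded chambers with acyclic orientations via Lemma~2.12 of~\cite{CJM2}, noting that trivial chambers contain no interior lattice points, and observing that the number $m(A)$ of compatible vertex orderings is constant on each chamber because the orientation is. Your write-up is a faithful and slightly more detailed version of that discussion.
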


The weight with which a leaky graph contributes to a count of leaky covers equals the size of the automorphism group of $\Gamma$ times the product of the expansion factors of the bounded edges.

For the $\mathbf{x}$-graph (with reference orientation) of Figure \ref{fig-reforientation}, the latter product equals the absolute value of

$$
(x_1+x_2-1)\cdot (-i-j-x_3+2)\cdot (i+j-x_4+2)\cdot i\cdot j\cdot (-j-x_3+1)\cdot (j-x_4+1).
$$

We denote this product by $\varphi_\Gamma$, and note it depends only on the chosen $\mathbf{x}$-graph $\Gamma$ (with reference orientation) and it equals the product of expansion factors (up to sign). More precisely, $\varphi_\Gamma$ equals the product of expansion factors if the reference orientation equals the orientation of the edges as imposed by the cover. Given a lattice point $(i_1,\ldots,i_g)$ in the hyperplane arrangement, which corresponds to a choice of cover, the orientation of an edge $e$ is determined by the question on which side of the hyperplane of $e$ the lattice point is. For some chambers in the hyperplane arrangement, we reverse the orientation of an odd number of edges, for some for an even number of edges. In the first case, $\varphi_\Gamma$ equals the negative of the product of the expansion factors, in the latter, it is equal to the product of the expansion factors.

The expression  $\varphi_\Gamma$ is  polynomial in the entries $x_i$ and in the variables $i_1,\ldots,i_g$, for the $g$ edges breaking the cycles. A choice of lattice point fixes the coordinates $(i_1,\ldots,i_g)$.

For each chamber $A$ in the hyperplane arrangement of the bounded edges, we let $\sign(A)$ be $+1$ if $\varphi_\Gamma$ equals the product of expansion factors, and $-1$ otherwise.

Denote by $S(\Gamma)$ the contribution to the count of leaky covers  $ \leakyn$ of genus $g$ and degree $\mathbf{x}$ given by a fixed $\mathbf{x}$-graph $\Gamma$, i.e.\ $\leakyn = \sum S(\Gamma)$, where the sum goes over all $\mathbf{x}$-graphs $\Gamma$. The following result follows from the discussion above:

\begin{proposition}\label{prop-contributionxgraph}
The contribution $S(\Gamma)$ of the $\mathbf{x}$-graph $\Gamma$ to the count of leaky covers  $ \leakyn$ of genus $g$ and degree $\mathbf{x}$ equals
$$S(\Gamma)=\frac{1}{|\Aut(\Gamma)|}\sum_A \sign(A)\cdot m(A)\cdot \sum_{(i_1,\ldots,i_g)\in A^{\circ}\cap \ZZ^g}\varphi_\Gamma(\mathbf{x},i_1,\ldots,i_g).$$
\end{proposition}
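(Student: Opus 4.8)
The plan is to convert the bijective dictionary established in the preceding lemma into the stated generating-function identity, the work being entirely the bookkeeping of automorphism factors and signs. Fix once and for all the reference orientation on the $\mathbf{x}$-graph $\Gamma$ together with the choice of $g$ bounded edges $e_1,\dots,e_g$ whose removal leaves a spanning tree; these are the data entering the definitions of $\varphi_\Gamma$ and of the hyperplane arrangement. I would first make precise the notion of a \emph{rigidified} leaky graph mapping to $\Gamma$: a $k$-leaky graph together with an identification of its underlying $\mathbf{x}$-graph (end labels included) with $\Gamma$. By Corollary~2.13 of \cite{CJM2}, such a rigidified object is the same as the data of (i) a lattice point $(i_1,\dots,i_g)\in\ZZ^g$ at which none of the expansion-factor linear forms vanish --- this determines the expansion factors on all bounded edges via the leaky condition, and an orientation obtained from the reference one by reversing the edges on which the corresponding form is negative --- together with (ii) a vertex ordering compatible with that orientation. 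Orientations containing a directed cycle admit no compatible ordering, so only acyclic orientations contribute; by \cite[Lemma~2.12]{CJM2} these index exactly the non-trivial bounded chambers, and the ``small triangle times a linear space'' observation recalled above shows the trivial bounded chambers contain no interior lattice points. Thus rigidified leaky graphs mapping to $\Gamma$ are enumerated by pairs consisting of a lattice point $(i_1,\dots,i_g)\in A^\circ\cap\ZZ^g$, with $A$ ranging over non-trivial bounded chambers, and a compatible vertex ordering.

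Next I would handle the automorphism bookkeeping. The group $\Aut(\Gamma)$ acts on the set of rigidified leaky graphs mapping to $\Gamma$ by changing the identification; its orbits are precisely the isomorphism classes of leaky graphs $\Gamma'$ that yield $\Gamma$ after forgetting, and the stabilizer of any rigidification in a given orbit is canonically $\Aut(\Gamma')$. Since the weight $\prod_e\omega(e)$ of Theorem~\ref{thm: leaky-formula} depends only on $\Gamma'$, the standard groupoid-cardinality identity gives
\[
S(\Gamma)\;=\;\sum_{\Gamma'\to\Gamma}\frac{1}{|\Aut(\Gamma')|}\prod_e\omega(e)\;=\;\frac{1}{|\Aut(\Gamma)|}\sum_{\text{rigidified }\Gamma'}\ \prod_e\omega(e).
\]
Finally I would evaluate this sum by grouping the rigidified leaky graphs according to the chamber $A$ of their lattice point. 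Because the orientation is locally constant on the complement of the arrangement, the number of vertex orderings compatible with the orientation attached to $A^\circ$ does not depend on the chosen lattice point; this is the invariant $m(A)$. For a lattice point in $A^\circ$ the genuine (positive) expansion factors on the bounded edges are the reference linear forms with a sign flip on exactly the edges where that form is negative on $A$, so by the definitions of $\varphi_\Gamma$ and $\sign(A)$ one has $\prod_e\omega(e)=\sign(A)\,\varphi_\Gamma(\mathbf{x},i_1,\dots,i_g)$. Substituting and summing over chambers and interior lattice points yields
\[
S(\Gamma)=\frac{1}{|\Aut(\Gamma)|}\sum_A \sign(A)\,m(A)\!\!\sum_{(i_1,\dots,i_g)\in A^{\circ}\cap\ZZ^g}\!\!\varphi_\Gamma(\mathbf{x},i_1,\dots,i_g).
\]

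I expect the only genuinely delicate point to be the automorphism bookkeeping together with the verification that the correspondence ``rigidified leaky graphs $\leftrightarrow$ lattice points $\times$ compatible orderings'' is an honest bijection --- in particular injectivity, i.e.\ that two distinct lattice points never produce isomorphic rigidifications, which rests on the expansion factors on the spanning-tree edges being uniquely determined by those on $e_1,\dots,e_g$ through the leaky condition (this is the content of Corollary~2.13 of \cite{CJM2}). The identification of $m(A)$ as a chamber invariant, the handling of trivial chambers, and the sign analysis are then routine.
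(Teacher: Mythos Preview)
Your proposal is correct and follows the same approach as the paper: the proposition is stated there as an immediate consequence of the preceding discussion (the lemma identifying leaky graphs over $\Gamma$ with interior lattice points weighted by $m(A)$, together with the definition of $\varphi_\Gamma$ and $\sign(A)$). Your treatment is in fact more explicit than the paper's about the automorphism bookkeeping---the paper does not spell out the rigidification/orbit-stabilizer step that converts $\sum_{\Gamma'}\frac{1}{|\Aut(\Gamma')|}$ into $\frac{1}{|\Aut(\Gamma)|}\sum_{\text{rigidified}}$---but the underlying argument is the same.
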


The contribution to the count of leaky covers of an $\mathbf{x}$-graph that we determine in Proposition \ref{prop-contributionxgraph} is independent from the choice of the edges breaking the cycle as this just amounts to choosing a basis of the $g$-dimensional vector space parametrizing edge expansion factors. The beginning of Section 2 of \cite{CJM2} offers a coordinate-free expression of this vector space. For the sake of explicitness, we chose to pick coordinates here.

\begin{proof}[Proof of Theorem \ref{thm-Ngxpiecewisepoly}]
For $g>0$, the piecewise polynomial structure of the map $ \leakyn$ follows from Proposition \ref{prop-contributionxgraph}, since summing a polynomial $\varphi_\Gamma$ of degree $3g-3+n$ (which equals the number of bounded edges of a trivalent graph $\Gamma$ with $n$ ends) over lattice points in a polytope in a $g$-dimensional vector space yields a polynomial of degree $4g-3+n$ by the iterated application of Bernoulli's formulas for the sum of  $k$-th powers. This is similar to Ehrhart theory, where we add lattice points (i.e. discretely integrate the function $1$) over a dilate of a polytope (in our case given by the hyperplanes determined by the edge weights) and obtain a polynomial. Here, we discretely integrate a polynomial function and use Bernoulli's formulas for the sum of  $k$-th powers to obtain a polynomial. The degree grows for each variable by $1$, i.e.\ in total by $g$.

The contribution $S(\Gamma)$ of an $\mathbf{x}$-graph $\Gamma$ is polynomial in $\mathbf{x}$ for any $\mathbf{x}$ such that the topology of the hyperplane arrangement given by the expansion factors does not change. By Lemma 3.8 of \cite{CJM2}, the topology changes for values $\mathbf{x}$ for which there exists a set of edges of $\Gamma$ of expansion factor $0$ that cut $\Gamma$ into two connected components. One of the connected components contains the ends with $i\in I$ and some vertices, at which there is leaking. Thus the value  $\mathbf{x}$ satisfies an equation of the form $\big\{\sum_{i\in I}x_i =w\big\}$ for some $w\in \ZZ$.

If $g=0$, consider the set of all directed trees with $n$ ends. As they are trees, the expansion factors of the bounded edges are determined by the expansion factors of the ends, and they are linear polynomials in the $x_i$. Given a directed tree, it contributes to the count $L_{0}(\mathbf{x})$ in a region bounded by the walls
$$\big\{\sum_{i\in I}x_i =w\big\},$$
if and only if the expansion factors of all edges are positive. Fix a region $B$.
Denote by $m(\Gamma)$ the number of vertex orderings compatible with the edge directions. Let 
$$s(\Gamma,B)=\begin{cases}0 \mbox{ if an expansion factor is negative,}\\ 1 \mbox{ if all expansion factors are positive.} \end{cases}$$
 Thus a directed tree $\Gamma$ contributes with
$$s(\Gamma,B)m(\Gamma)\frac{1}{|\Aut(\Gamma)|}\prod_e \omega(e),$$
where the product goes over the bounded edges $e$ of $\Gamma$ and $\omega(e)$ denotes their expansion factors.
Here, the product $\prod_e \omega(e)$ is a product of linear polynomials in the $x_i$. As there are $n-3$ bounded edges, this product is a polynomial in the $x_i$ of degree $n-3$.
We can see that the contribution of a single directed tree is $0$ or a polynomial in the $x_i$ of degree $n-3$. As $L_{0}(\mathbf{x})$ equals the sum of all contributions of all directed trees with $n$ ends, we obtain a piecewise polynomial. The piecewise structure results from the fact that different graphs yield nonzero contributions for different regions, as the walls are given by equations that set a possible expansion factor of a bounded edge zero. On one side of the wall, the corresponding edge would have to be oriented in a certain way to guarantee a positive expansion factor, on the other side of the wall, its direction would have to be reversed. 

\end{proof}

\begin{example}
Let $g=0$, $n=4$, $x_1,x_2>0$ and $x_3,x_4<0$. We must have $x_1+x_2+x_3+x_4=2$.
There are three ways to arrange the labels to a trivalent tree with four ends. If $x_1$ and $x_2$ are adjacent, there is only one way to orient the bounded edge in such a way that positive expansion factors can be obtained. For the other two possible choices of labels, we have to consider both ways to orient. Thus, there are five oriented trees to consider, as depicted in Figure \ref{fig-fivetrees}.
The expansion factors of the bounded edges which can be positive or negative are given by $x_1+x_3-1$, $x_1+x_4-1$.
Thus, we have to consider only two walls given by $x_1+x_3-1=0$, $x_1+x_4-1=0$. This two walls subdivide the set of definition of the map $\mathbf{x}\mapsto L_{0}(\mathbf{x})$ into four regions. Figure \ref{fig-fivetrees} gives, for each of the four regions, the trees with a nonzero contribution, and the polynomial which equals $L_{0}(\mathbf{x})$.
\begin{figure}
\begin{center}

\tikzset{every picture/.style={line width=0.75pt}} 

\begin{tikzpicture}[x=0.75pt,y=0.75pt,yscale=-1,xscale=1]

\draw    (20.25,180.25) -- (66.28,188.88) ;
\draw [shift={(68.25,189.25)}, rotate = 190.62] [color={rgb, 255:red, 0; green, 0; blue, 0 }  ][line width=0.75]    (10.93,-3.29) .. controls (6.95,-1.4) and (3.31,-0.3) .. (0,0) .. controls (3.31,0.3) and (6.95,1.4) .. (10.93,3.29)   ;
\draw    (20.75,198.75) -- (66.29,189.64) ;
\draw [shift={(68.25,189.25)}, rotate = 528.69] [color={rgb, 255:red, 0; green, 0; blue, 0 }  ][line width=0.75]    (10.93,-3.29) .. controls (6.95,-1.4) and (3.31,-0.3) .. (0,0) .. controls (3.31,0.3) and (6.95,1.4) .. (10.93,3.29)   ;
\draw    (68.25,189.25) -- (148.75,189.74) ;
\draw [shift={(150.75,189.75)}, rotate = 180.35] [color={rgb, 255:red, 0; green, 0; blue, 0 }  ][line width=0.75]    (10.93,-3.29) .. controls (6.95,-1.4) and (3.31,-0.3) .. (0,0) .. controls (3.31,0.3) and (6.95,1.4) .. (10.93,3.29)   ;
\draw    (150.75,189.75) -- (187.8,181.68) ;
\draw [shift={(189.75,181.25)}, rotate = 527.7] [color={rgb, 255:red, 0; green, 0; blue, 0 }  ][line width=0.75]    (10.93,-3.29) .. controls (6.95,-1.4) and (3.31,-0.3) .. (0,0) .. controls (3.31,0.3) and (6.95,1.4) .. (10.93,3.29)   ;
\draw    (150.75,189.75) -- (186.84,201.15) ;
\draw [shift={(188.75,201.75)}, rotate = 197.53] [color={rgb, 255:red, 0; green, 0; blue, 0 }  ][line width=0.75]    (10.93,-3.29) .. controls (6.95,-1.4) and (3.31,-0.3) .. (0,0) .. controls (3.31,0.3) and (6.95,1.4) .. (10.93,3.29)   ;
\draw    (245.75,172.75) -- (288.27,179.44) ;
\draw [shift={(290.25,179.75)}, rotate = 188.94] [color={rgb, 255:red, 0; green, 0; blue, 0 }  ][line width=0.75]    (10.93,-3.29) .. controls (6.95,-1.4) and (3.31,-0.3) .. (0,0) .. controls (3.31,0.3) and (6.95,1.4) .. (10.93,3.29)   ;
\draw    (246.25,225.25) -- (304.29,213.64) ;
\draw [shift={(306.25,213.25)}, rotate = 528.69] [color={rgb, 255:red, 0; green, 0; blue, 0 }  ][line width=0.75]    (10.93,-3.29) .. controls (6.95,-1.4) and (3.31,-0.3) .. (0,0) .. controls (3.31,0.3) and (6.95,1.4) .. (10.93,3.29)   ;
\draw    (306.25,213.25) -- (344.86,226.6) ;
\draw [shift={(346.75,227.25)}, rotate = 199.07] [color={rgb, 255:red, 0; green, 0; blue, 0 }  ][line width=0.75]    (10.93,-3.29) .. controls (6.95,-1.4) and (3.31,-0.3) .. (0,0) .. controls (3.31,0.3) and (6.95,1.4) .. (10.93,3.29)   ;
\draw    (290.25,179.75) -- (340.77,173.01) ;
\draw [shift={(342.75,172.75)}, rotate = 532.4100000000001] [color={rgb, 255:red, 0; green, 0; blue, 0 }  ][line width=0.75]    (10.93,-3.29) .. controls (6.95,-1.4) and (3.31,-0.3) .. (0,0) .. controls (3.31,0.3) and (6.95,1.4) .. (10.93,3.29)   ;
\draw    (290.25,179.75) -- (305.39,211.45) ;
\draw [shift={(306.25,213.25)}, rotate = 244.47] [color={rgb, 255:red, 0; green, 0; blue, 0 }  ][line width=0.75]    (10.93,-3.29) .. controls (6.95,-1.4) and (3.31,-0.3) .. (0,0) .. controls (3.31,0.3) and (6.95,1.4) .. (10.93,3.29)   ;
\draw    (415.75,174.25) -- (458.27,180.94) ;
\draw [shift={(460.25,181.25)}, rotate = 188.94] [color={rgb, 255:red, 0; green, 0; blue, 0 }  ][line width=0.75]    (10.93,-3.29) .. controls (6.95,-1.4) and (3.31,-0.3) .. (0,0) .. controls (3.31,0.3) and (6.95,1.4) .. (10.93,3.29)   ;
\draw    (416.25,226.75) -- (474.29,215.14) ;
\draw [shift={(476.25,214.75)}, rotate = 528.69] [color={rgb, 255:red, 0; green, 0; blue, 0 }  ][line width=0.75]    (10.93,-3.29) .. controls (6.95,-1.4) and (3.31,-0.3) .. (0,0) .. controls (3.31,0.3) and (6.95,1.4) .. (10.93,3.29)   ;
\draw    (476.25,214.75) -- (514.86,228.1) ;
\draw [shift={(516.75,228.75)}, rotate = 199.07] [color={rgb, 255:red, 0; green, 0; blue, 0 }  ][line width=0.75]    (10.93,-3.29) .. controls (6.95,-1.4) and (3.31,-0.3) .. (0,0) .. controls (3.31,0.3) and (6.95,1.4) .. (10.93,3.29)   ;
\draw    (460.25,181.25) -- (510.77,174.51) ;
\draw [shift={(512.75,174.25)}, rotate = 532.4100000000001] [color={rgb, 255:red, 0; green, 0; blue, 0 }  ][line width=0.75]    (10.93,-3.29) .. controls (6.95,-1.4) and (3.31,-0.3) .. (0,0) .. controls (3.31,0.3) and (6.95,1.4) .. (10.93,3.29)   ;
\draw    (460.25,181.25) -- (475.39,212.95) ;
\draw [shift={(476.25,214.75)}, rotate = 244.47] [color={rgb, 255:red, 0; green, 0; blue, 0 }  ][line width=0.75]    (10.93,-3.29) .. controls (6.95,-1.4) and (3.31,-0.3) .. (0,0) .. controls (3.31,0.3) and (6.95,1.4) .. (10.93,3.29)   ;
\draw    (233,286.75) -- (292.77,294.98) ;
\draw [shift={(294.75,295.25)}, rotate = 187.84] [color={rgb, 255:red, 0; green, 0; blue, 0 }  ][line width=0.75]    (10.93,-3.29) .. controls (6.95,-1.4) and (3.31,-0.3) .. (0,0) .. controls (3.31,0.3) and (6.95,1.4) .. (10.93,3.29)   ;
\draw    (231.5,332.25) -- (272.04,323.66) ;
\draw [shift={(274,323.25)}, rotate = 528.04] [color={rgb, 255:red, 0; green, 0; blue, 0 }  ][line width=0.75]    (10.93,-3.29) .. controls (6.95,-1.4) and (3.31,-0.3) .. (0,0) .. controls (3.31,0.3) and (6.95,1.4) .. (10.93,3.29)   ;
\draw    (274,323.25) -- (327.52,330.96) ;
\draw [shift={(329.5,331.25)}, rotate = 188.2] [color={rgb, 255:red, 0; green, 0; blue, 0 }  ][line width=0.75]    (10.93,-3.29) .. controls (6.95,-1.4) and (3.31,-0.3) .. (0,0) .. controls (3.31,0.3) and (6.95,1.4) .. (10.93,3.29)   ;
\draw    (294.75,295.25) -- (345.27,288.51) ;
\draw [shift={(347.25,288.25)}, rotate = 532.4100000000001] [color={rgb, 255:red, 0; green, 0; blue, 0 }  ][line width=0.75]    (10.93,-3.29) .. controls (6.95,-1.4) and (3.31,-0.3) .. (0,0) .. controls (3.31,0.3) and (6.95,1.4) .. (10.93,3.29)   ;
\draw    (274,323.25) -- (293.56,296.86) ;
\draw [shift={(294.75,295.25)}, rotate = 486.54] [color={rgb, 255:red, 0; green, 0; blue, 0 }  ][line width=0.75]    (10.93,-3.29) .. controls (6.95,-1.4) and (3.31,-0.3) .. (0,0) .. controls (3.31,0.3) and (6.95,1.4) .. (10.93,3.29)   ;
\draw    (419.5,289.75) -- (479.27,297.98) ;
\draw [shift={(481.25,298.25)}, rotate = 187.84] [color={rgb, 255:red, 0; green, 0; blue, 0 }  ][line width=0.75]    (10.93,-3.29) .. controls (6.95,-1.4) and (3.31,-0.3) .. (0,0) .. controls (3.31,0.3) and (6.95,1.4) .. (10.93,3.29)   ;
\draw    (418,335.25) -- (458.54,326.66) ;
\draw [shift={(460.5,326.25)}, rotate = 528.04] [color={rgb, 255:red, 0; green, 0; blue, 0 }  ][line width=0.75]    (10.93,-3.29) .. controls (6.95,-1.4) and (3.31,-0.3) .. (0,0) .. controls (3.31,0.3) and (6.95,1.4) .. (10.93,3.29)   ;
\draw    (460.5,326.25) -- (514.02,333.96) ;
\draw [shift={(516,334.25)}, rotate = 188.2] [color={rgb, 255:red, 0; green, 0; blue, 0 }  ][line width=0.75]    (10.93,-3.29) .. controls (6.95,-1.4) and (3.31,-0.3) .. (0,0) .. controls (3.31,0.3) and (6.95,1.4) .. (10.93,3.29)   ;
\draw    (481.25,298.25) -- (531.77,291.51) ;
\draw [shift={(533.75,291.25)}, rotate = 532.4100000000001] [color={rgb, 255:red, 0; green, 0; blue, 0 }  ][line width=0.75]    (10.93,-3.29) .. controls (6.95,-1.4) and (3.31,-0.3) .. (0,0) .. controls (3.31,0.3) and (6.95,1.4) .. (10.93,3.29)   ;
\draw    (460.5,326.25) -- (480.06,299.86) ;
\draw [shift={(481.25,298.25)}, rotate = 486.54] [color={rgb, 255:red, 0; green, 0; blue, 0 }  ][line width=0.75]    (10.93,-3.29) .. controls (6.95,-1.4) and (3.31,-0.3) .. (0,0) .. controls (3.31,0.3) and (6.95,1.4) .. (10.93,3.29)   ;
\draw    (250.67,389.33) -- (250,712.33) ;
\draw    (91.33,550) -- (410.67,550.67) ;

\draw (19.5,162.25) node [anchor=north west][inner sep=0.75pt]   [align=left] {$\displaystyle x_{1}$};
\draw (17.5,196.25) node [anchor=north west][inner sep=0.75pt]   [align=left] {$\displaystyle x_{2}$};
\draw (165.5,198.25) node [anchor=north west][inner sep=0.75pt]   [align=left] {$\displaystyle -x_{4}$};
\draw (168,161.25) node [anchor=north west][inner sep=0.75pt]   [align=left] {$\displaystyle -x_{3}$};
\draw (72,168.25) node [anchor=north west][inner sep=0.75pt]   [align=left] {$\displaystyle x_{1} +x_{2} -1$};
\draw (235.5,152.75) node [anchor=north west][inner sep=0.75pt]   [align=left] {$\displaystyle x_{1}$};
\draw (242.5,201.75) node [anchor=north west][inner sep=0.75pt]   [align=left] {$\displaystyle x_{2}$};
\draw (340,207.75) node [anchor=north west][inner sep=0.75pt]   [align=left] {$\displaystyle -x_{4}$};
\draw (339,155.75) node [anchor=north west][inner sep=0.75pt]   [align=left] {$\displaystyle -x_{3}$};
\draw (301.5,184.75) node [anchor=north west][inner sep=0.75pt]   [align=left] {$\displaystyle x_{1} +x_{3} -1$};
\draw (405.5,154.25) node [anchor=north west][inner sep=0.75pt]   [align=left] {$\displaystyle x_{1}$};
\draw (412.5,203.25) node [anchor=north west][inner sep=0.75pt]   [align=left] {$\displaystyle x_{2}$};
\draw (510,209.25) node [anchor=north west][inner sep=0.75pt]   [align=left] {$\displaystyle -x_{3}$};
\draw (509,157.25) node [anchor=north west][inner sep=0.75pt]   [align=left] {$\displaystyle -x_{4}$};
\draw (471.5,186.25) node [anchor=north west][inner sep=0.75pt]   [align=left] {$\displaystyle x_{1} +x_{4} -1$};
\draw (240,268.25) node [anchor=north west][inner sep=0.75pt]   [align=left] {$\displaystyle x_{1}$};
\draw (224,310.25) node [anchor=north west][inner sep=0.75pt]   [align=left] {$\displaystyle x_{2}$};
\draw (316.06,330.17) node [anchor=north west][inner sep=0.75pt]  [rotate=-0.35] [align=left] {$\displaystyle -x_{4}$};
\draw (343.5,271.25) node [anchor=north west][inner sep=0.75pt]   [align=left] {$\displaystyle -x_{3}$};
\draw (299,299.75) node [anchor=north west][inner sep=0.75pt]   [align=left] {$\displaystyle -x_{1} -x_{3} +1$};
\draw (426.5,271.25) node [anchor=north west][inner sep=0.75pt]   [align=left] {$\displaystyle x_{1}$};
\draw (410.5,313.25) node [anchor=north west][inner sep=0.75pt]   [align=left] {$\displaystyle x_{2}$};
\draw (502.56,333.17) node [anchor=north west][inner sep=0.75pt]  [rotate=-0.35] [align=left] {$\displaystyle -x_{3}$};
\draw (530,274.25) node [anchor=north west][inner sep=0.75pt]   [align=left] {$\displaystyle -x_{4}$};
\draw (485.5,302.75) node [anchor=north west][inner sep=0.75pt]   [align=left] {$\displaystyle -x_{1} -x_{4} +1$};
\draw (84,232) node [anchor=north west][inner sep=0.75pt]   [align=left] {I};
\draw (300,240.67) node [anchor=north west][inner sep=0.75pt]   [align=left] {II};
\draw (481.33,240) node [anchor=north west][inner sep=0.75pt]   [align=left] {III};
\draw (290,352.67) node [anchor=north west][inner sep=0.75pt]   [align=left] {IV};
\draw (472.67,352) node [anchor=north west][inner sep=0.75pt]   [align=left] {V};
\draw (80.83,380.08) node [anchor=north west][inner sep=0.75pt]   [align=left] {$\displaystyle x_{1} +x_{3} -1 >0$};
\draw (82.83,404.25) node [anchor=north west][inner sep=0.75pt]   [align=left] {$\displaystyle x_{1} +x_{4} -1 >0$};
\draw (282.83,404.25) node [anchor=north west][inner sep=0.75pt]   [align=left] {$\displaystyle x_{1} +x_{4} -1< 0$};
\draw (72.83,581.58) node [anchor=north west][inner sep=0.75pt]   [align=left] {$\displaystyle x_{1} +x_{4} -1 >0$};
\draw (280.83,379.42) node [anchor=north west][inner sep=0.75pt]   [align=left] {$\displaystyle x_{1} +x_{3} -1 >0$};
\draw (73.5,558.75) node [anchor=north west][inner sep=0.75pt]   [align=left] {$\displaystyle x_{1} +x_{3} -1< 0$};
\draw (280.83,560.75) node [anchor=north west][inner sep=0.75pt]   [align=left] {$\displaystyle x_{1} +x_{3} -1< 0$};
\draw (280.17,584.92) node [anchor=north west][inner sep=0.75pt]   [align=left] {$\displaystyle x_{1} +x_{4} -1< 0$};
\draw (108,448.33) node [anchor=north west][inner sep=0.75pt]   [align=left] {I,II,III};
\draw (286.67,446.33) node [anchor=north west][inner sep=0.75pt]   [align=left] {I,II,V};
\draw (114.67,625.67) node [anchor=north west][inner sep=0.75pt]   [align=left] {I,IV,III};
\draw (288,624.33) node [anchor=north west][inner sep=0.75pt]   [align=left] {I,IV,IV};
\draw (66.67,492.67) node [anchor=north west][inner sep=0.75pt]   [align=left] {$\displaystyle 3x_{1} +x_{2} +x_{3} +x_{4} -3$};
\draw (267.33,491.33) node [anchor=north west][inner sep=0.75pt]   [align=left] {$\displaystyle x_{1} +x_{2} +x_{3} -x_{4} -1$};
\draw (74.67,657.33) node [anchor=north west][inner sep=0.75pt]   [align=left] {$\displaystyle x_{1} +x_{2} -x_{3} +x_{4} -1$};
\draw (271.33,656) node [anchor=north west][inner sep=0.75pt]   [align=left] {$\displaystyle -x_{1} +x_{2} -x_{3} -x_{4} +1$};

\end{tikzpicture}

\end{center}
\caption{The five directed trees we need to consider for $g=0$, $n=4$, $x_1,x_2>0$, $x_3,x_4<0$. Below, the four regions of polynomiality with the trees that yield a nonzero contribution, and the polynomial which equals $L_{0}(\mathbf{x})$ in this region.}\label{fig-fivetrees}
\end{figure}
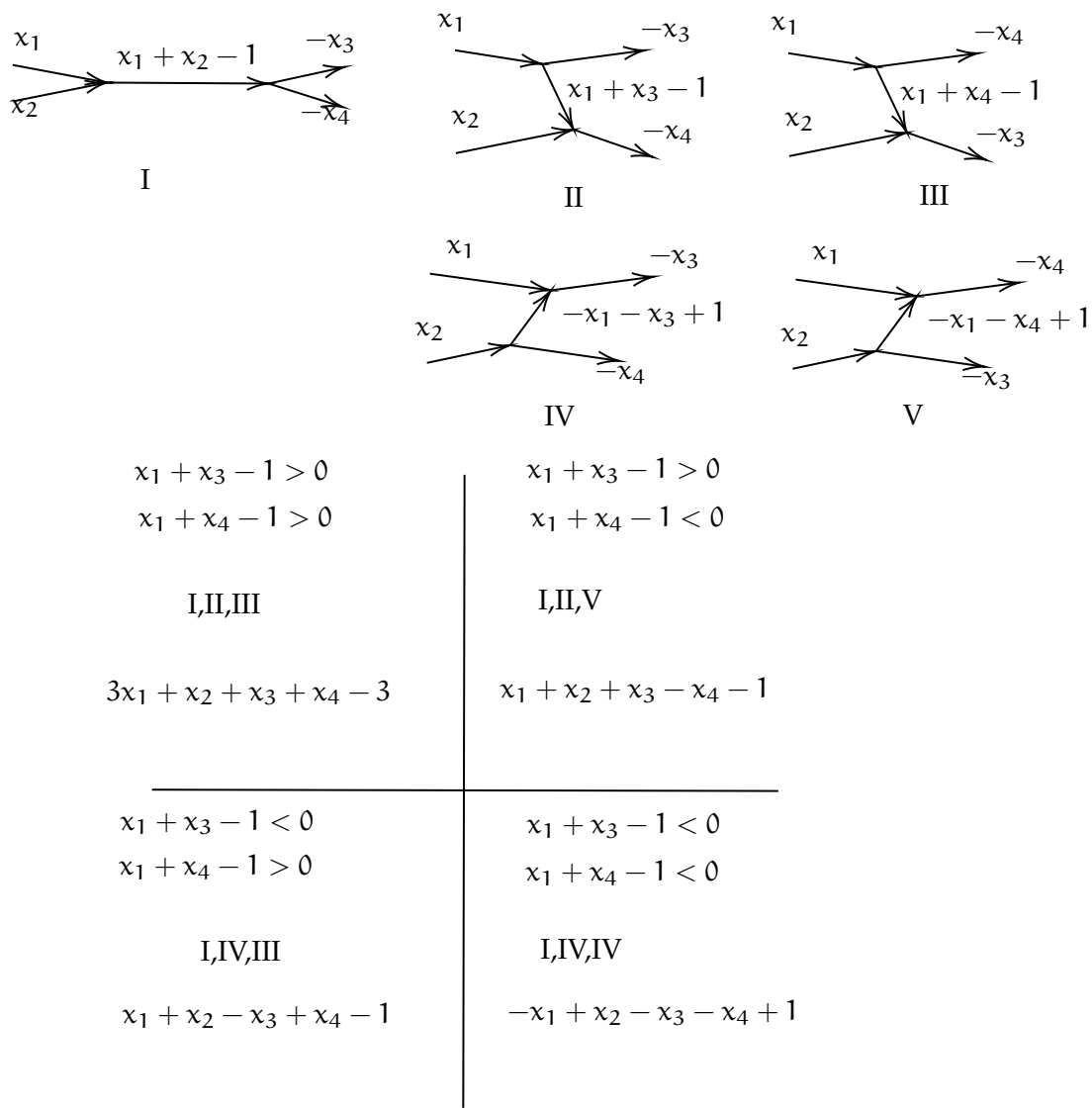

\end{example}

\begin{example}

Let $g=1$ and $\mathbf{x}$ consist of one positive and one negative entry. Denote the positive entry by $x$, then the negative entry must be $-(x-2)$, as their sum equals $n-2+2g=2-2+2\cdot 1$. We must have $x>0$ and $-(x-2)<0$, hence $x>2$. The set of definition of the map $\mathbf{x}\mapsto \leakyn$ thus equals $\{x \in \ZZ|\; x>2\}$.

\begin{figure}
\begin{center}

\tikzset{every picture/.style={line width=0.75pt}} 

\begin{tikzpicture}[x=0.75pt,y=0.75pt,yscale=-1,xscale=1]

\draw    (29.6,280) -- (80.4,280.4) ;
\draw    (80.4,280.4) .. controls (96.4,264.8) and (108.4,266) .. (120,280.4) ;
\draw    (80.4,280.4) .. controls (94.4,294.8) and (103.2,296.8) .. (120,280.4) ;
\draw    (120,280.4) -- (170.8,280.8) ;
\draw  [color={rgb, 255:red, 0; green, 0; blue, 0 }  ][line width=0.75] [line join = round][line cap = round] (51.2,274) .. controls (52.8,275.33) and (54.28,276.83) .. (56,278) .. controls (57.23,278.84) and (60.87,278.79) .. (60,280) .. controls (57.94,282.86) and (53.87,283.47) .. (50.8,285.2) ;
\draw  [color={rgb, 255:red, 0; green, 0; blue, 0 }  ][line width=0.75] [line join = round][line cap = round] (105.6,264.8) .. controls (106.27,267.07) and (108.66,269.49) .. (107.6,271.6) .. controls (106.5,273.79) and (103.07,273.47) .. (100.8,274.4) ;
\draw  [color={rgb, 255:red, 0; green, 0; blue, 0 }  ][line width=0.75] [line join = round][line cap = round] (108.4,284) .. controls (110.93,284.27) and (114.36,282.85) .. (116,284.8) .. controls (117.48,286.56) and (115.2,289.33) .. (114.8,291.6) ;
\draw  [color={rgb, 255:red, 0; green, 0; blue, 0 }  ][line width=0.75] [line join = round][line cap = round] (134.4,276.8) .. controls (136.78,277.92) and (143.6,279.73) .. (141.2,280.8) .. controls (138.85,281.85) and (134,281.02) .. (134,283.6) ;

\draw (34,262.8) node [anchor=north west][inner sep=0.75pt]   [align=left] {$\displaystyle x$};
\draw (140.8,262.4) node [anchor=north west][inner sep=0.75pt]   [align=left] {$\displaystyle x-2$};
\draw (95.6,251.6) node [anchor=north west][inner sep=0.75pt]   [align=left] {$\displaystyle i$};
\draw (76.8,291.6) node [anchor=north west][inner sep=0.75pt]   [align=left] {$\displaystyle x-1-i$};

\end{tikzpicture}

\end{center}

\caption{The $\mathbf{x}$-graph of genus $1$ with one in- and one out-end, with reference orientation.}\label{fig-egg}

\end{figure}
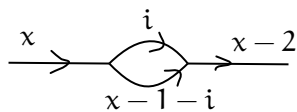

Figure \ref{fig-egg} shows the only $\mathbf{x}$-graph contributing to the count.
The hyperplane arrangement given by the expansion factors of the bounded edges is depicted in Figure \ref{fig-hyperplaneegg}.

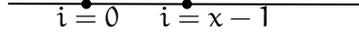
\begin{figure}
\begin{center}

\tikzset{every picture/.style={line width=0.75pt}} 

\begin{tikzpicture}[x=0.75pt,y=0.75pt,yscale=-1,xscale=1]

\draw    (30.4,290.4) -- (210.8,290.8) ;
\draw  [fill={rgb, 255:red, 0; green, 0; blue, 0 }  ,fill opacity=1 ] (67.85,290.55) .. controls (67.85,289.45) and (68.75,288.55) .. (69.85,288.55) .. controls (70.95,288.55) and (71.85,289.45) .. (71.85,290.55) .. controls (71.85,291.65) and (70.95,292.55) .. (69.85,292.55) .. controls (68.75,292.55) and (67.85,291.65) .. (67.85,290.55) -- cycle ;
\draw  [fill={rgb, 255:red, 0; green, 0; blue, 0 }  ,fill opacity=1 ] (118.6,290.6) .. controls (118.6,289.5) and (119.5,288.6) .. (120.6,288.6) .. controls (121.7,288.6) and (122.6,289.5) .. (122.6,290.6) .. controls (122.6,291.7) and (121.7,292.6) .. (120.6,292.6) .. controls (119.5,292.6) and (118.6,291.7) .. (118.6,290.6) -- cycle ;

\draw (53.25,290.13) node [anchor=north west][inner sep=0.75pt]   [align=left] {$\displaystyle i=0$};
\draw (105.25,290.13) node [anchor=north west][inner sep=0.75pt]   [align=left] {$\displaystyle i=x-1$};

\end{tikzpicture}

\end{center}
\caption{The hyperplane arrangement of expansion factors for the $\mathbf{x}$-graph in Figure \ref{fig-egg}.} \label{fig-hyperplaneegg}

\end{figure}

There is one bounded chamber $A$. The sign with which the lattice points in there contribute is one, as is the number of vertex orderings compatible with the edge orientations.

Thus $$\leakyn= \sum_{i=1}^{x-2} i\cdot (x-1-i) = \frac{1}{6}(x-1)(x-2)x.$$

As the hyperplane arrangement never changes its topology within the set of definition, the map is polynomial in this example.
\end{example}

\subsection{Counts of leaky covers as matrix elements on a Fock space}\label{sec:fs}
The bosonic Heisenberg algebra $\mathcal{H}$ is the Lie algebra with basis $a_n, n\in \mathbb{Z}$ satisfying commutator relations
\begin{equation} \label{commut}
[a_n, a_m]=(n\cdot \delta_{n,-m})a_0 \end{equation}
where $\delta_{n,-m}$ is the Kronecker symbol.

The bosonic Fock space $F$ is a particular representation of $\mathcal{H}$.  It is generated by a single ``vacuum vector'' $v_\emptyset$.  The positive generators annihilate $v_\emptyset$: $a_n\cdot v_\emptyset=0$ for $n>0$; $a_0$ acts as the identity, and the negative operators act freely.  That is, $F$ has a basis $b_\mu$ indexed by partitions, where 
\begin{equation}\label{expand}
b_\mu=a_{-\mu_1}\dots a_{-\mu_m}\cdot v_{\emptyset}
\end{equation}


 We define an inner product on $F$ by declaring $\langle v_\emptyset | v_\emptyset \rangle=1$ and $a_n$ to be the adjoint of $a_{-n}$. Thus we have 
\begin{equation} \label{Pair} \langle b_{\mu}| b_{\nu} \rangle = \mathfrak{z}(\mu) \delta_{\mu,\nu}.\end{equation}
where
$$\mathfrak{z}(\mu)=|\Aut(\mu)|\cdot\prod\mu_i$$
is the size of the centralizer of an element of cycle type $\mu$ in $S_{|\mu|}$.


Following standard conventions, we write 
$\langle \alpha|A|\beta\rangle$ for $\langle \alpha|A\beta\rangle$ for $\alpha, \beta \in F$ and an operator $A$ that is a product of elements $\mathcal{H}$. Such expressions are referred to as \emph{matrix elements}. We also write $\langle A \rangle$ for $\langle v_\emptyset |A|v_\emptyset \rangle$, such a value is called a \emph{vacuum expectation}.

Our goal now is to write counts of leaky covers as  bosonic matrix elements.  This is largely a matter of rephrasing results from \cite{OP06, BG14}.  

%
%

\begin{definition} The \emph{$k$-leaky cut-join} operator is defined by:
\begin{equation}
\label{caj}M_k 
= \frac{1}{2} \sum_{\substack{0< i\leq j }} a_{-j}a_{-i}a_{i+j-k}+a_{-i-j-k}a_{i}a_{j}  - \frac{1}{24}a_{-k}
\end{equation}
\end{definition}

\begin{theorem}\label{thm-Fock}
The count of leaky covers can be expressed as a matrix element:
$$ \leakyn=\frac{1}{\mathfrak{z}(\mathbf{x}^+)}\frac{1}{\mathfrak{z}(\mathbf{x}^-)}\langle b_{\mathbf{x}^+} |M_k^{n-2+2g} |b_{-\mathbf{x}^-}\rangle.$$
\end{theorem}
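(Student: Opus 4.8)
The plan is to reduce the statement to the combinatorial formula of Theorem~\ref{thm: leaky-formula}, matching the weighted count of $k$-leaky graphs against the expansion of the matrix element, and then to identify the operator $M_k$ as the generator of the ``add one simple branch point'' operation in the leaky setting. First I would unwind the matrix element $\langle b_{\mathbf{x}^+} |M_k^{r} |b_{-\mathbf{x}^-}\rangle$ with $r = 2g-2+n$ by inserting the monomial expansion \eqref{caj} of $M_k$ and applying Wick's theorem, i.e.\ the commutation relation \eqref{commut}, repeatedly. Each of the $r$ copies of $M_k$ contributes a vertex: the term $a_{-j}a_{-i}a_{i+j-k}$ (resp.\ $a_{-i-j+k}a_ia_j$) is a trivalent vertex whose incoming flag has weight $i+j-k$ (resp.\ weights $i,j$) and whose outgoing flags have weights $i,j$ (resp.\ $i+j-k$), so that the balancing at the vertex reads ``(sum of incoming) $=$ (sum of outgoing) $+k$'', which is precisely condition \textbf{(6')} of Definition~\ref{def-leakygraph}. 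The creation operators $a_{-\mathbf{x}^+}$ hitting $v_\emptyset$ on the right and the annihilation operators pairing into $\langle b_{\mathbf{x}^-}|$ on the left produce the in-ends and out-ends with the prescribed expansion factors $\mathbf{x}$. The ordering of the $r$ operators records exactly a linear ordering of the $r$ trivalent vertices compatible with the orientation, matching condition (4) of Definition~\ref{def-mongraph} carried over to leaky graphs; and the first Betti number being $g$ is forced by the genus/Euler-characteristic bookkeeping, exactly as in the Hurwitz case.

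Next I would track the numerical weights through this dictionary. Each elementary commutator $[a_n,a_{-n}] = n$ contributes a factor equal to the weight of the edge it creates, so the full Wick contraction yields $\prod_e \omega(e)$ — this is where the product of expansion factors in Theorem~\ref{thm: leaky-formula} comes from. The factor $\tfrac12$ in front of $M_k$, together with the symmetric range $0\le i\le j$ and the two symmetric families of terms in \eqref{caj}, is exactly what is needed so that summing over \emph{labelled} ways of forming a given graph divides out by the graph automorphisms, producing the $1/|\Aut(\Gamma)|$; the normalizing prefactors $1/\mathfrak{z}(\mathbf{x}^+)$ and $1/\mathfrak{z}(\mathbf{x}^-)$ cancel the corresponding centralizer factors \eqref{Pair} coming from the two boundary states, so that ends are effectively \emph{labelled} rather than counted as multisets — which is the convention under which $\leakyn$ is defined. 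Assembling these, the matrix element becomes $\sum_\Gamma \frac{1}{|\Aut(\Gamma)|}\prod_e\omega(e)$ over $k$-leaky graphs of type $(g,\mathbf{x})$, which is $\leakyn$ by Theorem~\ref{thm: leaky-formula}.

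In practice I would not redo this bookkeeping from scratch: the $k=0$ case is exactly the classical statement that double Hurwitz numbers are vacuum/matrix elements of powers of the cut-join operator (Okounkov--Pandharipande, and the operator-formalism treatment in~\cite{OP06,BG14}), and the only modification is the shift $i+j \rightsquigarrow i+j-k$ in the three-point vertex. So the cleanest writeup cites the $k=0$ correspondence and then observes that the entire derivation is formal in the vertex weights: replacing the balanced vertex by the $k$-leaky vertex changes the combinatorial model from monodromy graphs to $k$-leaky graphs and changes nothing else in the representation-theoretic manipulation. One should check that the shifted operator $M_k$ still preserves the $a_0$-grading appropriately (it does: the operator has ``net weight drop'' $k$ per application, matching $\sum x_i = k(2g-2+n)$) and that the summation ranges in \eqref{caj} are exactly those making every $k$-leaky trivalent vertex appear once.

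The main obstacle I anticipate is purely bookkeeping: getting the combinatorial prefactors to line up precisely — the interplay between the $\tfrac12$ in $M_k$, the $i\le j$ versus unordered $\{i,j\}$ convention, the $\mathfrak{z}$-normalizations of the two boundary partitions, and the automorphism group $|\Aut(\Gamma)|$ of the whole leaky cover (including automorphisms that permute parallel edges and the two in-flags of a vertex with $i=j$). This is exactly the type of factor that was flagged as subtle elsewhere in the paper, so the care has to go into verifying that the naive Wick count, after dividing by the symmetry of the operator monomials, reproduces $1/|\Aut(\Gamma)|$ and not some other power. Everything else — the identification of vertices with operator terms, edges with contractions, ends with boundary states, and vertex orderings with the linear order of the $r$ factors — is a direct transcription of the standard Fock-space argument for Hurwitz numbers.
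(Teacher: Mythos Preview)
Your proposal is correct and follows essentially the same route as the paper: both expand the matrix element via Wick's theorem into a sum over Feynman graphs, identify the monomials of $M_k$ with trivalent $k$-leaky vertex fragments, the boundary states with labelled ends, and contractions with weighted edges, and then invoke Theorem~\ref{thm: leaky-formula}. The paper's own proof is only an outline that cites \cite[Theorem~5.3.4]{CJMR} and \cite[Proposition~5.2]{BG14} for the Wick bookkeeping rather than tracking the prefactors explicitly, so your more detailed accounting of the $\tfrac12$, the $i\le j$ range, and the $\mathfrak z$-normalizations goes slightly beyond what the paper spells out, but is exactly the content those references supply.
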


The proof of Theorem \ref{thm-Fock} is  analgous to the proof of~\cite[Theorem 5.3.4]{CJMR}, and most of the technicalities involved are standard in the study of operator algebras; we provide an outline of a proof, noting how to adapt the results in loc. cit.

The matrix element $\langle b_{\mathbf{x}^+} |M_k^{n-2+2g} |b_{-\mathbf{x}^-}\rangle$  can be computed as a weighted sum over a set of  Feynman graphs $\Gamma$, which are naturally in bijection with the monomials $m_\Gamma$ in a formal expansion of   $M_k^{n-2+2g}$ that contribute to the matrix element, i.e. such that    $\langle  b_{\mathbf{x}^+} |m_\Gamma |b_{-\mathbf{x}^-}\rangle \not=0$. 

The relevant Feynman graphs arise as follows. Every monomial  in $M_k$ is associated to a {\it Feynman fragment}: a vertex with germs of edges  labeled and oriented according to the operators appearing in the monomial;  the two vectors $b_{\mathbf{x}^+}$ and $b_{-\mathbf{x}^-}$ are associated to labelled ends, see Figure \ref{fig-Feynman} for an illustration with an explicit monomial and its associated fragment.

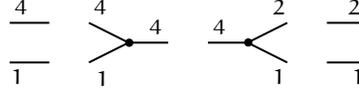
\begin{figure}
\begin{center}

\tikzset{every picture/.style={line width=0.75pt}} 

\begin{tikzpicture}[x=0.75pt,y=0.75pt,yscale=-1,xscale=1]

\draw  [fill={rgb, 255:red, 0; green, 0; blue, 0 }  ,fill opacity=1 ] (208.75,440.04) .. controls (208.75,439.17) and (209.46,438.46) .. (210.33,438.46) .. controls (211.21,438.46) and (211.92,439.17) .. (211.92,440.04) .. controls (211.92,440.92) and (211.21,441.63) .. (210.33,441.63) .. controls (209.46,441.63) and (208.75,440.92) .. (208.75,440.04) -- cycle ;
\draw  [fill={rgb, 255:red, 0; green, 0; blue, 0 }  ,fill opacity=1 ] (268.75,439.92) .. controls (268.75,439.04) and (269.46,438.33) .. (270.33,438.33) .. controls (271.21,438.33) and (271.92,439.04) .. (271.92,439.92) .. controls (271.92,440.79) and (271.21,441.5) .. (270.33,441.5) .. controls (269.46,441.5) and (268.75,440.79) .. (268.75,439.92) -- cycle ;
\draw    (210,440) -- (230,440) ;
\draw    (250,440) -- (270,440) ;
\draw    (190,430) -- (210,440) ;
\draw    (190,450) -- (210,440) ;
\draw    (270,440) -- (290,430) ;
\draw    (270,440) -- (290,450) ;
\draw    (150,430) -- (170,430) ;
\draw    (150,450) -- (170,450) ;
\draw    (310,430) -- (330,430) ;
\draw    (310,450) -- (330,450) ;

\draw (149,452) node [anchor=north west][inner sep=0.75pt]  [font=\scriptsize] [align=left] {$\displaystyle 1$};
\draw (151,417) node [anchor=north west][inner sep=0.75pt]  [font=\scriptsize] [align=left] {$\displaystyle 4$};
\draw (191,417) node [anchor=north west][inner sep=0.75pt]  [font=\scriptsize] [align=left] {$\displaystyle 4$};
\draw (192,453) node [anchor=north west][inner sep=0.75pt]  [font=\scriptsize] [align=left] {$\displaystyle 1$};
\draw (219,427) node [anchor=north west][inner sep=0.75pt]  [font=\scriptsize] [align=left] {$\displaystyle 4$};
\draw (251,427) node [anchor=north west][inner sep=0.75pt]  [font=\scriptsize] [align=left] {$\displaystyle 4$};
\draw (281,417) node [anchor=north west][inner sep=0.75pt]  [font=\scriptsize] [align=left] {$\displaystyle 2$};
\draw (281,452) node [anchor=north west][inner sep=0.75pt]  [font=\scriptsize] [align=left] {$\displaystyle 1$};
\draw (319,417) node [anchor=north west][inner sep=0.75pt]  [font=\scriptsize] [align=left] {$\displaystyle 2$};
\draw (321,452) node [anchor=north west][inner sep=0.75pt]  [font=\scriptsize] [align=left] {$\displaystyle 1$};

\end{tikzpicture}

\end{center}
\caption{The augmented  ordered list of Feynman fragments for the monomial $(a_{-4}a_{-1}a_{4})(a_{-4} a_{2}a_{1})$  contributing to the vacuum expectation $\langle b_{(4,1)} |M_1^{2} |b_{(2,1)}\rangle $ which equals the count of $1$-leaky tropical covers $L_0((4,1,-2,-1), 1)$. The way the fragments are drawn also suggests the unique pairing of germs of edges that  gives rise to the Feynman graph.
}\label{fig-Feynman}
\end{figure}

Any monomial in the formal expansion of $M_k^{n-2+2g}$ then  naturally corresponds to an ordered list of $n-2+2g$ Feynman fragments, which we augment by adding the labelled ends on either side. A monomial contributes to the matrix element  if and only if the two following conditions are verified:
\begin{enumerate}
\item each operator with a positive index $a_{j}$ is paired up with a term $a_{-j}$  that appears on its right, possibly as part of the expansion of  $b_{-\mathbf{x}^-}$ as in \eqref{expand}: the commutator relations \eqref{commut} can be used as a way to move each positive $a_j$ to the right, and if $a_j$ can get past all other operators and thus act on the vacuum vector, we obtain zero.
\item each operator with a negative index $a_{j}$ which is unpaired after the above step can pair with an $a_{-j}$  part  of the expansion of $b_{\mathbf{x}^+}$: otherwise the inner product \eqref{Pair}  vanishes. 
\end{enumerate}
 Each nonzero contribution thus corresponds to a way to connect pairs of edge germs and obtain a global Feynamm graph from the (augmented) fragments, see Example \ref{ex-Feynman}.

The process just described can be viewed as a variant of Wick's theorem \cite{Wic50}, see also~\cite[Proposition 5.2]{BG14}. In the situation of Theorem \ref{thm-Fock}, the Feynman graphs in question are the leaky tropical covers defined in \ref{def-leakygraph}. The proof of Theorem~\ref{thm-Fock} then follows from Theorem \ref{thm: leaky-formula}.
This is analogous to how Theorem 5.3.4 in \cite{CJMR} is deduced from an expression in terms of a sum over graphs (see Definition 3.1.1 and Theorem 3.1.2 in \cite{CJMR}) using Wick's theorem, see Proposition 5.4.3 in \cite{CJMR}.

\begin{example}\label{ex-Feynman}
Consider the count of $1$-leaky covers of genus $0$ with left ends of weight $4$ and $1$ and right ends of weight $2$ and $1$, $L_0((4,1,-2,-1), 1)$. By Theorem \ref{thm-Fock}, it equals the vacuum expectation
$\langle b_{(4,1)} |M_1^{2} |b_{(2,1)}\rangle = \langle a_{-4}a_{-1}v_\emptyset |M_1^{2} |a_{-2}a_{-1}v_\emptyset\rangle = \langle v_\emptyset |a_1a_4\cdot M_k^{2} |a_{-2}a_{-1}v_\emptyset\rangle$.
The summand $a_{-4}a_{-1}a_{4}$ appears in $M_k$, as well as $a_{-4}a_2a_1$. Figure \ref{fig-Feynman} depicts the edge germs we draw for the monomial in $a_i$ corresponding to this choice of summands. As we can connect the edge germs to form a leaky tropical cover, the monomial yields a nonzero contribution to the vacuum expectation. Since all nonzero contributions arise in this way, we obtain the equality stated in Theorem \ref{thm-Fock}.

\end{example}

\bibliographystyle{siam} 
\bibliography{LeakyCovers} 

\end{document}